\newcommandx{\unsure}[2][1=]{\todo[linecolor=red,backgroundcolor=red!25,bordercolor=red,#1]{#2}}
\newcommandx{\change}[2][1=]{\todo[linecolor=blue,backgroundcolor=blue!25,bordercolor=blue,#1]{#2}}
\newcommandx{\info}[2][1=]{\todo[linecolor=OliveGreen,backgroundcolor=OliveGreen!25,bordercolor=OliveGreen,#1]{#2}}
\newcommandx{\improvement}[2][1=]{\todo[linecolor=Plum,backgroundcolor=Plum!25,bordercolor=Plum,#1]{#2}}
\newcommandx{\thiswillnotshow}[2][1=]{\todo[disable,#1]{#2}}
\DeclareMathOperator*{\argmax}{arg\,max}
\DeclareMathOperator*{\argmin}{arg\,min}
\newcommand{\exclude}[1]{}
\newcommand{\bc}{\mathbf{c}}
\newcommand{\bpi}{\bm{\pi}}
\newcommand{\bfxi}{\bm{\xi}}
\newcommand{\pr}{\mathbb{P}}
\newcommand{\vertex}{\mathcal{V}}
\newcommand{\commodities}{\mathcal{K}}
\newcommand{\commodity}{\ell}
\newcommand{\piselk}{\vertex_k^{sel}}
\newcommand{\kink}{k \in [K]}
\newcommand{\xvalue}{z}
\newcommand{\pisel}{\{\vertex_k^{sel}\}_{\kink}}
\newcommand{\vsel}[1]{\overline{\xvalue}(#1, \pisel)}
\newcommand{\qsel}[1]{\overline{Q}_k(#1, \piselk)}
\newcommand{\dfs}{\Pi}
\newcommand{\pilist}{\vertex_{DSP}}
\newcommand{\piperm}{\vertex_{\text{perm}}}
\newcommand{\pitrial}{\vertex_{\text{trial}}}
\newcommand{\piused}{\vertex_{\text{used}}}
\newcommand{\xlist}{X^{feas}}
\newcommand{\xlistopt}{X^{opt}}
\newcommand{\phaseonexlist}{\xlistopt}
\newcommand{\nodes}{\mathcal{N}}
\newcommand\numberthis{\addtocounter{equation}{1}\tag{\theequation}}
\newcommand{\R}{\mathbb{R}}
\newcommand{\thetakt}{\{\theta_k^t\}_{\kink}}
\newcommand{\thetaip}{\{\hat{\theta}_k\}_{\kink}}
\newcommand{\bestx}{x^{WS}}
\newcommand{\closex}{\overline{x}}
\newcommand{\prooffeasibleregion}{\Pi}
\newcommand{\picure}{\vertex_{cur}}
\newcommand{\someformat}[1]{\texttt{#1}}
\newcommand{\qapp}[1]{\overline{Q}_k(#1)}
\newcommand{\vapp}[1]{\overline{\xvalue}(#1)}
\begin{document}

\title{Accelerating Benders decomposition for solving
a sequence of sample average approximation replications\thanks{This research has been supported by ONR grant N00014-21-1-2574.}}
\titlerunning{Accelerating solution of a sequence of SAA replications}

\author{Harshit Kothari \and James R. Luedtke}


\institute{Harshit Kothari \at
Department of Industrial and Systems Engineering, Wisconsin Institute of Discovery, University
of Wisconsin-Madison, Madison, WI, USA
           \email{hkothari2@wisc.edu}
\and
James R. Luedtke \at
Department of Industrial and Systems Engineering, Wisconsin Institute of Discovery, University
of Wisconsin-Madison, Madison, WI, USA
           \email{jim.luedtke@wisc.edu}}

\maketitle
\begin{abstract}

Sample average approximation (SAA) is a technique for obtaining approximate solutions 
to stochastic programs that uses the average from a random sample to approximate the expected value 
that is being optimized.
Since the outcome from solving an SAA is random, statistical estimates on the optimal value of the true problem can be obtained by solving multiple SAA replications with independent samples.
We study techniques to accelerate the solution of this set of SAA replications, 
when solving them sequentially via Benders decomposition.
We investigate how to exploit similarities in the problem structure,
as the replications just differ in the realizations of the random samples.
Our extensive computational experiments 
provide empirical evidence that our techniques for using information from solving previous replications can significantly reduce the solution time of later replications.

\keywords{Benders decomposition \and Stochastic Programming \and Integer Programming}
\end{abstract}
\section{Introduction}
\label{sec:Introduction}

We study methods for solving two-stage mixed-integer 
stochastic programs with continuous recourse and randomness only in the right-hand side of the second-stage problem:
\begin{equation}
\label{eq:tsp}
 \min_{x \in X}c^\top x + \mathbb{E}_{\bfxi} [Q(x, \bfxi)] , 
\end{equation}
where \( Q(x, \bfxi) \) is the optimal value of the second-stage problem
and is defined by
\[ Q(x, \bfxi):= \min_y \{q^\top y : Wy = h(\bfxi) -T(\bfxi)x, y \ge 0 \} \label{eq:sp_def}
  \numberthis
.\]
Here \(x \in  X \subseteq \mathbb{Z}^q_+ \times \mathbb{R}^{n-q}_+ \) is the 
first-stage decision vector, \( y \in \mathbb{R}^{m}_+ \) is the vector of recourse decisions
and \( \bfxi \) is a random vector.

Unless \( \bfxi \) has a finite and small number of possible realizations, it is usually 
impossible to solve this general form of a stochastic program to optimality
because the expected value is hard to compute.
Indeed, in \cite{dyer2006computational}
the authors prove that two-stage linear stochastic programming 
problems are $\#P$-hard.
Sample average approximation (SAA) \cite{kleywegt2002sample} is an approach for 
obtaining approximate solutions to stochastic programs which approximates the expectation with an average over a finite set of scenarios sampled from the distribution of the random vector.
With \( K \) scenarios \( \left(  \xi_1, \ldots, \xi_K \right)\), the SAA problem is given by:
\begin{equation}
\hat{z}_K = \min_{x \in X}c^\top x + \sum_{k=1}^{K} p_k Q(x,\xi_k). \label{eq:saa} 
\end{equation}
\sloppypar{Let \( \hat{z}_K, \hat{x}_K \) be the optimal value and an optimal solution of problem \eqref{eq:saa}.
As these quantities are random, the multiple-replications procedure (MRP) \cite{mak1999monte,bayraksan2006assessing} has been proposed as a method
to determine a confidence interval on the optimal value of the problem. MRP calculates this confidence interval for a candidate solution by solving 
multiple SAA replications with different independent samples of \( \bfxi \).
When only a fixed set of realizations of the random data is available, \cite{lam2018bounding} propose a method to estimate solution quality
using bootstrap aggregating to generate multiple samples and solving the corresponding SAA replications.
Solving multiple SAA replications with different random samples can also be used to find higher-quality feasible solutions, e.g.,  \cite{sen2016mitigating}. 
Stochastic decomposition \cite{higle1991stochastic,wang2020statistical} and
stochastic approximation \cite{robbins1951stochastic} are attractive alternatives to SAA for {\it continuous} stochastic programs, but are
not applicable to mixed-integer stochastic programs.
With these motivations in mind, we focus on the problem of solving a set of SAA replications of the same underlying stochastic programming instance derived from different samples.}

Our objective in this work is to investigate how information obtained from solving one SAA replication can be used to speed up solution of other replications in order to minimize the total computation time to solve a set of SAA replications. If these SAA replications are solved on a sufficiently large cluster of computers, the wall clock time (as opposed to total computation time) could be minimized by simply solving all the replications in parallel. However, even in this setting, we argue that minimizing total computation time is an appropriate goal, as computing time is typically a limited resource, and  power usage grows with total computation time. In addition, when multiple machines are available, algorithms such as Benders decomposition can be implemented in parallel on these machines when solving a single replication. Thus, solving the SAA replications sequentially and using information from early replications to reduce the solution time of later replications can also lead to reduced wall clock time.

One approach to solving an SAA replication is to solve its deterministic equivalent form \cite{birge2011introduction}. However, this formulation can become too large to solve directly when the the number of scenarios \( (K) \) is large. 
Decomposition methods like  Benders decomposition \cite{Benders1962,van1969shaped} and dual decomposition \cite{caroe1999dual} address this by decomposing the problem and solving a sequence of smaller problems, coordinating the results, and repeating.
In this work, we focus on problems with continuous recourse and randomness appearing only in the right-hand side of the subproblem constraints.
This problem structure is seen in many applications such as
fleet planning \cite{mak1999monte}, telecommunications network design \cite{sen1994network} and
melt control \cite{dupavcova2005melt}. Benders decomposition is a leading technique for solving problems having this structure as it is able to exploit the convexity of the recourse function, and hence we study techniques for reusing information when using
Benders decomposition to solve a set of SAA replications. 

\begin{sloppypar}
Our assumptions on the problem structure imply that the 
dual feasible region 
of the Benders subproblem is fixed across all possible scenarios. Thus, our first proposal is to
reuse dual solutions from previous replications 
to generate Benders cuts for future SAA replications by storing dual solutions in a dual solution pool (DSP). Then, whenever we would normally solve a subproblem to generate a Benders cut, we first check the DSP to see if any dual solutions there define a violated cut, and if so, we add the cut and avoid solving the subproblem.
The idea of reusing stored dual solutions to generate Benders cuts
has been used in different contexts when solving a single stochastic program, such as in stochastic decomposition
\cite{higle1991stochastic,wang2020statistical} 
and Benders decomposition \cite{sakhavand2022subproblem,adulyasak2015benders}. 
This is the first time this technique 
has been used 
in the context of solving multiple different SAA replications.
We support this idea theoretically by estimating the number of solutions that need to be in the DSP to assure that a nearly most-violated cut can be found for a given first-stage solution.
\end{sloppypar}

While using the DSP can reduce time spent solving subproblems to generate Benders cuts, we make several additional contributions that reduce the computation time significantly beyond this. First, in preliminary computational studies
we observed that the DSP tends to grow excessively large as the number of replications increases, as more distinct dual extreme points are discovered across replications. This makes the process of checking the DSP for violated cuts time-consuming.
To address this,
we propose a method for curating the DSP by retaining
only some of the dual solutions in the pool. 
Second,
we propose two techniques for choosing  Benders cuts to include in the Benders main problem at the start of the algorithm.
We tested these methods on two-stage stochastic linear and integer programs on three test problems.
The combination of initialization techniques and DSP
methods reduced the total time taken
to solve these replications by half compared to using the DSP alone.

\begin{sloppypar}
Our work contributes to a growing body of literature investigating techniques for improving methods for solving a sequence of closely related instances of an optimization problem. 
The surveys \cite{bengio2021machine} and \cite{deza2023machine} 
provide an overview of recent research investigating the use of machine learning (ML) to 
learn better methods for solving instances from a family of related instances.
In stochastic programming, the authors in \cite{jia2021benders} train a support vector machines for the binary
classification of the usefulness of a Benders cut and observe
that their model allows for a reduction in the total solving
time for a variety of two-stage stochastic programming instances. 
In \cite{dumouchelle2022neur2sp}, the authors
propose to approximate the second-stage solution value with a feed-forward neural network.
Recent work by \cite{larsen2023fast} also
leverages ML to estimate
the scenario subproblem optimal values.
In \cite{mitrai2023learning}, the authors
develop an ML approach to accelerate
generalized Benders decomposition by estimating the optimal number of cuts 
that should be added to the main problem in the first iteration.
There has also been work on using ML 
to quickly compute primal solutions for stochastic programs 
\cite{bengio2020learning,nair2018learning}.
ML-enhanced Benders decomposition has been 
used to accelerate solution times across various domains, 
including power systems \cite{BOROZAN2024110985},
wireless resource allocation \cite{lee2020accelerating}, 
network design problems \cite{chan2023machine}, 
model predictive control \cite{mitrai2024computationally},
among others.
These studies generally begin by solving optimization problems
offline to collect data,
followed by training an ML model
to find algorithm parameters that speed up future solves.
Our setting is fundamentally different. We solve a sequence of SAA replications of the same stochastic program to obtain
statistical estimates (e.g., for confidence intervals). We do not assume we have solved any related instances prior to
beginning the solution of these SAA replications, and hence there is no opportunity to train an ML model based on
previous instance data to aid in solving these replications. Instead, information must be gathered and reused on the fly as each replication is solved sequentially.
\end{sloppypar}

This paper is organized as follows:
In Section \ref{sec:Benders}, we review the Benders decomposition method, for both two-stage stochastic linear and integer programs.
In Section \ref{sec:techniques},
we present our methods for accelerating Benders decomposition by reusing information from the solution of previous replications.
In Section \ref{sec:Computational study},
we present our results from computational experiments.

%
%

\section{Benders Decomposition}
\label{sec:Benders}

\begin{sloppypar}
In this section, we describe the Benders decomposition method \cite{Benders1962,van1969shaped} for solving the SAA \eqref{eq:saa}.
The dual of the subproblem \eqref{eq:sp_def} for scenario $\kink := \{1,\ldots,K\}$ is given by:
\begin{align}
   \max_{\pi} \{(h(\xi_k) - T(\xi_k)x)^\top\pi :W^\top\pi \le q \}. \label{eq:spdual} 
\end{align}
We denote the dual feasible region as \( \dfs = \{\pi: W^\top\pi \le q\} \), which is independent of the scenario $\kink$.\end{sloppypar}

We make the following assumptions about the stochastic program \eqref{eq:tsp}:
\begin{itemize}
\item The subproblem dual feasible region $\dfs$ is non-empty.
\item Relatively complete recourse:
For every feasible first-stage solution $x \in X$ and every $\bfxi$ in the support of the random variable $\bfxi$, 
there exists a feasible decision to subproblem \eqref{eq:sp_def}.
\end{itemize}
We make the relatively complete recourse assumption mainly to simplify exposition.
In Section \ref{ssub:relatively_complete_recourse},
we introduce extensions of our methods to handle the case where relatively complete recourse does not hold.

Under these assumptions both the primal 
and the dual of the subproblem \eqref{eq:sp_def} have an optimal solution 
and from strong duality, we conclude that their optimal values are equal.
Let \( \vertex \) denote the set of 
all the vertices of \( \dfs \).
Then, 
\begin{align}
  Q(x,\xi_k) &= \max_{\pi} \{(h(\xi_k) - T(\xi_k)x)^\top\pi : \pi \in \vertex \}.
  \label{eq:spvertex}
\end{align}
%
Benders decomposition is based on a reformulation of \eqref{eq:saa}, 
that introduces a new variable \( \theta_k \)
to represent the optimal value of subproblem \( k \) for each scenario  $\kink$. 
Using \eqref{eq:spvertex}, the reformulation is as follows:
\begin{mini}
  {\textrm{\scriptsize $x\in X, \theta$}}{c^\top x + \sum_{k=1}^{K} p_k \theta_k}{}{}
\addConstraint{Ax = b}{}{}
\addConstraint{\theta_k \ge (h(\xi_k) - T(\xi_k) x)^\top \pi,}{\quad}{\pi \in \vertex,  \quad \kink.}
\label{eq:reformulation}
\end{mini}
In the following subsections we discuss how this reformulation is used within  Benders decomposition to solve two-stage stochastic linear programs (LPs) (Section \ref{sub:Stochastic LPs}) and two-stage stochastic IPs with continuous recourse (Section \ref{sub:Stochastic IPs}).

\subsection{Stochastic LPs} 
\label{sub:Stochastic LPs}

The size of the reformulation \eqref{eq:reformulation} depends on the number of vertices of the dual subproblem \( (\vertex) \), which is usually too large to explicitly enumerate.
Thus, a delayed cut generation scheme 
is used to iteratively add these constraints. Specifically, Benders decomposition works with a ``main problem'' which has the form of \eqref{eq:reformulation} but at each iteration $t$ only includes a subset $\vertex_{k,t}$ of the constraints for each scenario $\kink$:
\begin{mini!}
  {\textrm{\scriptsize $x\in X$}}{c^\top x + \sum_{k=1}^{K} p_k \theta_k}{\label{form:MPt}}{\textrm{MP}^t=}
\addConstraint{Ax = b}{}{}
\addConstraint{\theta_k \ge (h(\xi_k) - T(\xi_k) x)^\top \pi_k,}{\quad}{\pi_k \in \vertex_{k, t}, \, \kink. \label{eq:benderscuts}}
\end{mini!} 
The constraints \eqref{eq:benderscuts} in the reformulation are called Benders cuts.

Algorithm \ref{alg:BD} outlines the Benders decomposition algorithm for LPs.
We start by solving the main problem with some initial cuts of the form \eqref{eq:benderscuts} included. For simplicity of exposition, we assume these initial cuts are sufficient to assure problem \eqref{form:MPt} is bounded and hence has an optimal solution, (\( x^t, \thetakt \)). 
Next, we solve subproblems \eqref{eq:sp_def}, with $\xi = \xi_k$, to check for violated cuts
and evaluate the objective for each subproblem $\kink$. 
If this objective value is greater than \( \theta_k^t \), 
we have identified a violated cut.
This leads to the generation of a Benders cut,
defined by the dual solution \( \pi_k^t \). 
After iterating through all the subproblems, and adding violated cuts to the main problem, 
we solve the updated main problem.
This iterative process continues until no further violated cuts are found,
leading to an optimal solution of the original problem.

At each iteration, the main problem objective provides a lower bound, \( L^t \),
to the problem because it is a relaxation to the reformulation \eqref{eq:reformulation}. 
We also obtain an upper bound $U^t$ in each iteration by solving the subproblems at every iteration.
The difference between $U^t$ and $L^t$ can be used as a 
convergence condition, terminating when this difference
falls below a tolerance denoted by \( \epsilon \).

\begin{algorithm} 
\caption{Benders decomposition algorithm.}
\begin{algorithmic}[1]
  \State Initialize \( t \coloneqq 0 \) 
  \State Initialize \( \vertex_{k, 0}\) for all \(\kink\) \Comment{Initialization step}
\Repeat
  \State \( \textrm{cutAdded} \gets \textrm{False} \)
  \State Solve \( MP^t \) (\ref{form:MPt}) and obtain (\(x^t, \thetakt\))
  \State \(L^t \gets c^T x^t + \frac{1}{K} \sum_{k=1}^K \theta_k^t\) \Comment{Calculate lower bound}
  \ForAll {\(\kink\)}
  \State Solve \eqref{eq:sp_def}, with $\xi = \xi_k, x=x^t$ and obtain $Q(x^t,\xi_k)$ and dual solution $\pi_k^t$
    \If{\(Q(x^t,\xi_k) > \theta_k^t\)} 
      \State \(\vertex_{k, t+1} \gets \vertex_{k, t} \cup \{ \pi_k^t \}\)  \Comment{Store dual solution to add cut in next iteration}
      \State \(\textrm{cutAdded} \gets \textrm{True}\)
    \EndIf
  \EndFor
  \State \(U^t \gets c^{\top} x^{t} + \frac{1}{K} \sum_{k=1}^K Q(x^t,\xi_k)\) \Comment{Calculate upper bound}
  \State \(t \gets t+1\)
\Until{\(\textrm{cutAdded} = \textrm{False}\) or \( U^t - L^t \le \epsilon \)} 
\end{algorithmic}
\label{alg:BD}
\end{algorithm}

This version of Benders decomposition 
in which we introduce auxiliary variables $\theta_k$ for every scenario $\kink$
is called the multi-cut version.
An alternate version is the single-cut version \cite{van1969shaped}
in which a single variable is used to represent the epigraph of the expected value of the subproblem objective taken over the full set of scenarios.
We focus on the multi-cut version in this paper, but 
in Section \ref{ssub:single-cut}
we discuss how the methods proposed here can be extended to 
the single-cut version of the algorithm.

\subsection{Stochastic IPs} 
\label{sub:Stochastic IPs}

When dealing with stochastic programs featuring integer variables in the first-stage,
solving the main problem can be computationally expensive because it is an integer program.
Hence, instead of iteratively solving the main problem and adding cuts,
stochastic IPs can alternatively be solved using the branch-and-cut method 
\cite{fortz2009improved,naoum2013interior}.

\begin{algorithm}
\caption{Branch-and-cut for IPs.}
\begin{algorithmic}[1]
  \State Initialize $\nodes \gets \{0\}$, $\bar{z} \gets +\infty$, $(x^*, \{\theta_k^*\}_{\kink}) \gets \emptyset$
    \State Initialize $LP_0$ to be the relaxation of main problem with initial cuts
    \While{$\nodes \neq \emptyset$}
        \State Choose a node $i \in \nodes$,  $\nodes \gets \nodes \setminus \{i\}$
        \State Solve $LP_i$. If feasible, obtain optimal solution $(\hat{x}, \thetaip)$ and optimal value $\hat{z}$
        \If{$LP_i$ feasible and $\hat{z} < \bar{z}$} 
        \If{$\hat{x} \in X$} \Comment{Search for violated Benders cuts}
            \State \( \textrm{cutAdded} \gets \textrm{False} \)
            \ForAll {\(\kink\)}
                \State 
                Solve \eqref{eq:sp_def}, with $\xi = \xi_k, x=\hat{x}$ and obtain $Q(\hat{x},\xi_k)$ and dual solution $\pi_k$
                \If{\(Q(\hat{x},\xi_k) > \hat{\theta}_k\)} 
                    \State Add Benders cut: \(\theta_k \ge (h(\xi_k) - T(\xi_k) x)^\top \pi_k \)
                    \State \(\textrm{cutAdded} \gets \textrm{True}\)
                \EndIf
            \EndFor
            \If{cutAdded = True}
                \State go to step 5
            \Else
            \State Update incumbent solution $x^* \gets \hat{x}$, and $\bar{z}$  \Comment{(\( \hat{x}, \hat{\theta} \)) is feasible}
            \EndIf
        \Else
            \State Partition the problem and update \( \nodes \) \Comment{Branching step}
        \EndIf
        \EndIf
    \EndWhile
    \State \Return $(x^*, \{\theta_k^*\}_{\kink})$
\end{algorithmic}
\label{alg:bnc}
\end{algorithm}

A standard branch-and-cut algorithm for solving two-stage stochastic integer programs
with continuous recourse is outlined in Algorithm \ref{alg:bnc}.
The algorithm begins by adding initial Benders cuts to the main problem.
Starting the algorithm with some initial cuts defined can help speed up convergence of the algorithm \cite{saharidis2011initialization,fortz2009improved}.
The LP relaxation of this main problem
is represented by the root node or \( LP_0 \).
This node is added to the list of candidate branch-and-bound nodes \(\nodes\).

In each iteration, the algorithm selects a node from \(\nodes\) 
and solves the corresponding LP relaxation,
a relaxation of the problem with added constraints from branching.
This LP generates a candidate solution \((\hat{x}, \thetaip) \).
If \(\hat{x}\) violates the integrality constraints (i.e., \(\hat{x} \notin X\)),
the algorithm creates two new subproblems by subdividing the feasible region (branching step).
These two new subproblems are then appended to \( \nodes \).

If \(\hat{x} \in X\), we solve subproblems \eqref{eq:sp_def} with $x=\hat{x}$ and for $\xi=\xi_k$ for each $k \in [K]$ to check if the solution (\( \hat{x}, \thetaip \))
is feasible to the original problem.
If any violated Benders cuts are found, they are added to the main problem,
and the relaxation at that node is solved again.
This process of dynamically adding cuts
can be implemented using a ``Lazy constraint callback" in MIP solvers.
If no violated cuts are identified, then the current solution is feasible.
Whenever we find a feasible solution with a better objective value than the current best solution,
we update the upper bound \(\bar{z}\) and the incumbent solution \(x^*\).
The algorithm terminates when there are no more nodes to explore in \(\nodes\).


In \cite{fortz2009improved}, the authors describe an implementation of Benders decomposition using the branch-and-cut technique.
They highlight the importance of initializing the algorithm 
with a good set of cuts included in the main problem to accelerate convergence.
To achieve this, they first solve the LP relaxation of the original problem using Benders decomposition
and retain all identified cuts in the initial main problem of the IP. 
We follow their approach
but only retain the cuts which are active at the optimal solution of the LP relaxation
to manage the size of the main problem.
%


\section{Techniques for Reusing Information to Accelerate Benders Decomposition}
\label{sec:techniques}

We now consider a setting where we wish to solve a sequence of SAA replications of the form \eqref{eq:saa}, each with an independently drawn set of scenarios. Suppose we wish to solve $M$ such replications, and each having $K$ scenarios, \( \left(  \xi^r_1, \ldots, \xi^r_K \right)\) for $r=1,\ldots,M$. The question we investigate is, when solving the SAA \eqref{eq:saa} associated with a replication $r > 1$ with Benders decomposition, how can we use information obtained from solving replications $1,\ldots,r-1$ to reduce the solution time? 

We first consider a simple warm-start strategy for stochastic IPs: use the optimal first-stage solution (say $\tilde{x}$) from a previous replication as an initial feasible solution in the branch-and-bound process.
When solving a new replication, we evaluate $\tilde{x}$ by solving the subproblems \eqref{eq:sp_def} at $(\tilde{x},\xi_k)$ for all scenarios $\kink$, and pass the resulting solution $(\tilde{x}, {Q(\tilde{x},\xi_k)}_{\kink})$ to the solver.
This can help the solution process by providing an upper bound for pruning nodes.

In the remainder of this section we propose other techniques for reusing information to accelerate Benders decomposition. To do so, we consider the
key computational tasks within Benders decomposition:
solving the main problem and 
solving subproblems to generate cuts.
We propose two techniques,
the use of a dual solution pool (Section \ref{sub:DSP}) 
and a curated version of this pool (Section \ref{sub:curateddsp})
to reduce time solving the subproblems.
We also propose initialization techniques in Section \ref{sub:init} 
aimed at accelerating convergence of the algorithm.

\subsection{Dual Solution Pool}
\label{sub:DSP}

As discussed in Section \ref{sec:Benders}, 
the extreme points of the dual of the subproblem \eqref{eq:sp_def} are used to generate Benders cuts.
Given our assumption that \( W \text{ and } q \) are fixed in the second-stage problem,
the dual feasible region of the subproblem is the same for every possible scenario, as shown in \eqref{eq:spdual}.
This implies that the dual solutions from previous replications can be used
to generate valid Benders cuts for the current replication
without solving subproblems.

To exploit this observation, 
as we discover dual solutions when solving an SAA replication, we store them
in a dual solution pool (DSP) denoted by \( \pilist \).
Then, when solving a new SAA replication, these dual solutions 
can be used to generate Benders cuts 
if they cut off the current primal solution, thereby potentially avoiding the need to solve the subproblem \eqref{eq:sp_def}.

We first describe how the DSP is used when solving two-stage stochastic LPs. Algorithm \ref{alg:BD} is modified by running Algorithm \ref{alg:dsp} before starting the loop of solving subproblems (line 7). 
After obtaining a main problem solution, \( x^t \),
we check the DSP, \( \pilist \), for each scenario $\kink$
to find if it contains any dual solutions that define a Benders cut violated by $x^t$. 
Specifically,
we solve the following problem for each scenario $k \in [K]$ (line 3 of Algorithm \ref{alg:dsp}):
\begin{equation}
  \qapp{x^t, \pilist} \coloneqq \max_{{\pi \in \pilist}} \pi^\top(h(\xi_k) - T(\xi_k) x^t).
  \label{eq:value_approx}
\end{equation}
That is, for each $k \in [K]$, \(   \qapp{x^t, \pilist} \) is defined as the objective value of subproblem \eqref{eq:spvertex} evaluated at \( x^t \),
  with the feasible region replaced by \( \pilist \).
  If problem \eqref{eq:value_approx} identifies a violated cut, i.e., \( \qapp{x^{t}, \pilist } > \theta^t_k \)
for some scenario $\kink$,
we add the identified cut for each such scenario 
and proceed with solving the updated main problem (lines 4-5 in Algorithm \ref{alg:BD}).
If no violated cut is found from the DSP for any scenario $k \in [K]$, then Algorithm \ref{alg:dsp} proceeds with solving the scenario subproblems (line 7). 
If any of these subproblems yields a violated Benders cut, the dual solution that defines the cut is not in the DSP (otherwise we would have found the violated cut when running Algorithm \ref{alg:dsp}).
Thus, every dual solution that defines a violated cut is saved, and at the end of the replication we add these the DSP for use in the solution of the following SAA replications.

We propose to use the DSP in two ways when solving two-stage stochastic IPs. First, as discussed in Section \ref{sub:Stochastic IPs}, the LP relaxation of the stochastic IP is solved  to obtain a set of initial cuts to include in the main problem before starting the branch-and-cut process. The DSP can be used exactly as described in the last paragraph for solving two-stage stochastic LPs to accelerate this process. 
Second, we can apply Algorithm \ref{alg:dsp} when an integer feasible solution $\hat{x} \in X$ is found in Algorithm \ref{alg:bnc} before solving subproblems (lines 9-15 of Algorithm \ref{alg:bnc}).
If a cut is found from the DSP for any scenario $k \in [K]$ it is added to the main problem and the LP relaxation at the node is re-solved (line 5). If no cuts are found in the DSP, then the subproblems are solved as usual.

\begin{algorithm} 
  \caption{Using DSP to look for a violated Benders cut.}
\begin{algorithmic}[1]
\State \textbf{Input:} Current main problem first-stage solution: \(x^t\), DSP: \(\pilist\)
\State $\textrm{cutAdded} \gets \textrm{False}$ 
\ForAll{$\kink$}
\State Evaluate $\qapp{x^t, \pilist}$ using \eqref{eq:value_approx} and let \( \overline{\pi}_k \) be a dual solution achieving the max
    \If{$\qapp{x^t, \pilist} > \theta^t_k$} 
    \State Add violated cut: \( \theta_k \ge {\overline{\pi}_k}^\top(h(\xi_k) - T(\xi_k) x^t) \)
        \State $\textrm{cutAdded} \gets \textrm{True}$
    \EndIf
\EndFor
\end{algorithmic}
\label{alg:dsp}
\end{algorithm}

We next turn to a theoretical investigation of the size of the DSP that is required for it to be expected to successfully find violated Benders cuts. 
Let $\prooffeasibleregion \subset \R^d$ be a polytope with vertex set $\vertex$ and define $z^*: \R^d \to \R$ as:
\begin{equation}
\label{eq:randprob}
    z^*(c) = \max_{\pi \in \prooffeasibleregion} c^\top \pi.
\end{equation}
In addition, define $\pi^*:\R^d \rightarrow \vertex$, where $\pi^*(c)$ is an optimal solution of \eqref{eq:randprob}
for each $c \in \R^d$ (ties can be broken arbitrarily).
Let $A_v, v \in \vertex$ be a partition of
$\mathbb{R}^d$ such that $A_v = \{ c \in \mathbb{R}^d : \pi^*(c) = v \}$. 
Assume we are solving a sequence of $N$ problems of the form \eqref{eq:randprob} with $c=\mathbf{c}^i$ for 
$i=1,\ldots, N$ where each $\mathbf{c}^i$ is a random vector drawn independently from the same distribution. 
Given the set of optimal solutions obtained from these problems (think of it as the DSP), we then obtain a new random
coefficient $\mathbf{\hat{c}}$ from the same distribution and wish to understand whether any solution in this set is optimal for this new coefficient.
The following lemma relates this probability to the geometry of the set $\prooffeasibleregion$. 

\begin{lemma}
\label{lem:bound_primal_exact}
 Let $\bc^1, \dots, \bc^N$ and $\hat{\bc}$ be independent random vectors identically distributed according to a
 distribution $\mathcal{D}$.
For each vertex $v \in \vertex$, let $q_v = \pr_\mathcal{D}(\bc \in A_v)$.
Define `$\mathrm{Failure}$' as the event that none of the solutions $\pi^*(\bc^i)$ for $i=1,\ldots,N$ is optimal for
$z^*(c)$.
Then,
\[
\pr( \mathrm{Failure})  = \sum_{v \in \vertex} q_v (1 - q_v)^N.
\]
\end{lemma}

\begin{proof}
We analyze the probability by conditioning on the realization of the target vector $\hat{\bc}$. Suppose
$\pi^*(\hat{\bc}) = v$, and hence $\hat{\bc} \in A_v$. The sample set fails to recover $v$ if and only if no sample $\bc^i$ falls into $A_v$.
For a single sample $\bc^i$, the probability of missing region $A_v$ is $1 - q_v$. Since the $N$ samples are independent, the probability that all of them miss $A_v$ is:
\[
\pr(\text{Failure} \mid \hat{\bc} \in A_v) = (1 - q_v)^N.
\]
To find the unconditional failure probability, we integrate over the possible
realizations of $\hat{\bc}$. Since the regions $A_v$ partition the space, we apply the law of total probability summing
over all $v \in \vertex$:
\begin{align*}
\pr(\text{Failure}) &= \sum_{v \in \vertex} \pr(\text{Failure} \mid \hat{\bc} \in A_v) \pr(\hat{\bc} \in A_v) \\
&= \sum_{v \in \vertex} (1 - q_v)^N \cdot q_v. \qedhere
\end{align*}
\end{proof}
If
all vertices are equally likely to be optimal ($q_v = 1/|\vertex|$), then the failure probability in Lemma
\ref{lem:bound_primal_exact} simplifies to 
\[ \sum_{v \in \vertex} \frac{1}{|\vertex|} \Bigl(1 - \frac{1}{|\vertex|}\Bigr)^N = \Bigl(1 - \frac{1}{|\vertex|}\Bigr)^N \leq
e^{-N/|\vertex|}.  \]
Thus, to assure a failure probability less than $\rho$ would require $N \geq |\vertex | \ln (1 / \rho)$. This
illustrates dependence on the geometry of $\prooffeasibleregion$ in terms of its number of vertices $|\vertex|$.
For a standard simplex in $\mathbb{R}^d$, this is a modest bound as $|\vertex|=d$. However, in general, $|\vertex|$ may be exponential in
$d$. On the other hand, the following corollary demonstrates that if a subset of vertices has high probability of being
optimal, then a sample size on the order of the size of that subset is sufficient to achieve low failure probability. 

\begin{corollary}
Let $\delta > 0$ and assume $S \subseteq \vertex$  satisfies
$ \sum_{v \in S} q_v \geq 1- \delta.$ 
Then, 
\[ \pr(\mathrm{Failure}) \leq \frac{|S|}{e N} + \delta. \]
\end{corollary}
\begin{proof}
Applying Lemma \ref{lem:bound_primal_exact} and the assumption yield
\begin{align}
\pr( \text{Failure}) &= \sum_{v \in \vertex} q_v (1 - q_v)^N \nonumber \\
&=\sum_{v \in S} q_v (1 - q_v)^N  + \sum_{v \in \vertex \setminus S}  q_v (1 - q_v)^N \nonumber \\
&\leq  \sum_{v \in S} q_v (1 - q_v)^N + \sum_{v \in \vertex \setminus S} q_v 
\leq  \sum_{v \in S} q_v (1 - q_v)^N + \delta \label{eq:im1} . 
\end{align}
Next observe that for $q \in [0, 1]$, $q(1-q)^N \leq qe^{-Nq}$ and the maximum of $f(q) = qe^{-Nq}$ over $q \geq 0$ is
obtained at $q=1/N$. Indeed, $f'(q) = e^{-Nq}(1-Nq)$ and hence the unique stationary point occurs at $q=1/N$. Since
$f''(q)=-Ne^{-Nq}+(1-Nq)(-Ne^{-Nq})=(-2N+N^2q)e^{-Nq}$, $f''(1/N) < 0$ and hence this is the maximum. Thus, $q(1-q)^N
\leq (1/N)e^{-1} = 1/(eN)$. Substituting into \eqref{eq:im1} yields
\[\pr( \text{Failure})  \leq \sum_{v \in S} \frac{1}{eN} + \delta = \frac{|S|}{eN} + \delta.  \qedhere  \]
\end{proof}

\exclude{
\begin{lemma}
\label{lem:bound}
Let $\bc^i$ for $i = 1,\ldots,N$ and $\hat{\bc}$ be random coefficient vectors drawn independently
from an identical distribution that satisfies
\begin{equation}
\pr(\|\bc-\mu\| \geq t) \le e^\frac{-t^2}{2\sigma^2} \label{eq:distribution_proof}
\end{equation}
where $\mu$ is the mean of the distribution and $\sigma$
is a variation parameter.
Assume $\prooffeasibleregion$ is bounded and let $D = \max\{ \| \pi - \pi' \| : \pi, \pi' \in \prooffeasibleregion\} $ be the diameter of $\prooffeasibleregion$.
Let $z^*(\hat{\bc}) = \max \{ \hat{\bc} \pi : \pi \in \prooffeasibleregion \}$ and $\epsilon > 0$.
For each $i = 1,\ldots,N$, let $\bpi^i$ denote an optimal solution to $\max\{\bc^i \pi : \pi \in \prooffeasibleregion\}$.
Then:
\[\pr\left\{   \max_{i \in [N]} \{ \hat{\bc} \bpi^i \} \le  z^*(\hat{\bc}) - \epsilon D \right\} \le \delta + \delta^N,\]
where $\delta = \exp\left( -\frac{\epsilon^2}{8\sigma^2} \right)$.
\end{lemma}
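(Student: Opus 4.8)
\section*{Proof proposal}

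The plan is to condition on the new coefficient $\hat{\bc}$ and exploit that, given $\hat{\bc}$, the stored solutions $\bpi^1,\dots,\bpi^N$ are mutually independent, since each $\bpi^i$ is a function of $\bc^i$ alone and the $\bc^i$ are drawn independently. Writing the event of interest as the intersection over $i \in [N]$ of the ``bad'' events $B_i = \{\hat{\bc}^\top\bpi^i \le z^*(\hat{\bc}) - \epsilon D\}$, conditional independence lets me factor $\pr(\bigcap_i B_i \mid \hat{\bc}) = \prod_i \pr(B_i \mid \hat{\bc})$. The whole argument then reduces to bounding a single per-sample factor $\pr(B_i \mid \hat{\bc})$ by $2\exp(-\epsilon^2/(8\sigma^2))$ and raising it to the $N$-th power.

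The heart of the argument is a geometric reduction converting objective suboptimality into distance between coefficient vectors. Let $\pi^*$ attain $z^*(\hat{\bc})$. Since $\bpi^i$ is optimal for $\bc^i$ over $\prooffeasibleregion$ while $\pi^* \in \prooffeasibleregion$, we have $(\bc^i)^\top(\pi^* - \bpi^i) \le 0$, and hence
\[ z^*(\hat{\bc}) - \hat{\bc}^\top\bpi^i = \hat{\bc}^\top(\pi^* - \bpi^i) \le (\hat{\bc}-\bc^i)^\top(\pi^* - \bpi^i) \le \|\hat{\bc}-\bc^i\|\,\|\pi^* - \bpi^i\| \le \|\hat{\bc}-\bc^i\|\,D, \]
where the final two steps use Cauchy--Schwarz and the diameter bound $\|\pi^* - \bpi^i\| \le D$ (both points lie in $\prooffeasibleregion$). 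Consequently the bad event $B_i$ forces $\|\hat{\bc}-\bc^i\| \ge \epsilon$, so $\pr(B_i \mid \hat{\bc}) \le \pr(\|\hat{\bc}-\bc^i\| \ge \epsilon \mid \hat{\bc})$. This is the key step, and it is where the boundedness of $\prooffeasibleregion$ enters.

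It then remains to bound the probability that two sample vectors are at distance at least $\epsilon$. By the triangle inequality, $\|\hat{\bc}-\bc^i\| \ge \epsilon$ implies that at least one of $\|\hat{\bc}-\mu\| \ge \epsilon/2$ or $\|\bc^i - \mu\| \ge \epsilon/2$ holds; a union bound together with the concentration hypothesis \eqref{eq:distribution_proof} applied with $t = \epsilon/2$ to each term yields the per-sample bound $2\exp(-\epsilon^2/(8\sigma^2))$. Combining this with the factorization from the first paragraph produces the claimed $N$-th power.

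The step I expect to require the most care is the interaction between the shared coefficient $\hat{\bc}$ and the product structure. Because $\hat{\bc}$ appears in every $B_i$, the events are not unconditionally independent, which is precisely why I would condition on $\hat{\bc}$ first. The delicate point is to make the per-sample concentration bound $2\exp(-\epsilon^2/(8\sigma^2))$ combine cleanly across the $N$ conditionally independent samples despite this shared randomness, so that the conditional product integrates back to the stated unconditional bound. By contrast, the geometric reduction and the triangle-inequality concentration estimate are routine once the conditioning framework is in place.
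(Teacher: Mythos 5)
Your two main estimates are exactly the ones in the paper's proof: the chain $z^*(\hat{\bc})-\hat{\bc}^\top\bpi^i \le \|\hat{\bc}-\bc^i\|\,D$, obtained from optimality of $\bpi^i$ for $\bc^i$ plus Cauchy--Schwarz and the diameter bound, is the paper's inequality \eqref{eq:diameter}, and your triangle-inequality/union-bound concentration step is the paper's inequality \eqref{eq:distribution}. Where you depart is the factorization: the paper writes the probability of the intersection directly as an $N$-th power of a single-sample probability, with no conditioning, whereas you condition on $\hat{\bc}$ --- correctly observing that the events $B_i$ all share $\hat{\bc}$ and so are not unconditionally independent.

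However, the step you yourself flagged as delicate is a genuine gap, and it does not ``combine cleanly.'' Conditional on $\hat{\bc}$, the quantity $\|\hat{\bc}-\mu\|$ is deterministic, so the union bound gives only $\pr(B_i \mid \hat{\bc}) \le \mathbf{1}\{\|\hat{\bc}-\mu\|\ge \epsilon/2\} + e^{-\epsilon^2/(8\sigma^2)}$, not the uniform per-sample bound $2e^{-\epsilon^2/(8\sigma^2)}$: the factor $2$ in the single-sample estimate comes from averaging over \emph{both} sources of randomness, which is no longer available once $\hat{\bc}$ is fixed. Writing $p(\hat{\bc}) = \pr(\|\hat{\bc}-\bc\|\ge\epsilon \mid \hat{\bc})$ for an independent copy $\bc$, conditional independence gives $\pr\bigl(\bigcap_i B_i\bigr) \le \mathbb{E}\bigl[p(\hat{\bc})^N\bigr]$, and Jensen's inequality runs the wrong way here, since $\mathbb{E}[p^N] \ge (\mathbb{E}[p])^N$; you cannot pass from the averaged per-sample bound to the stated $N$-th power. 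What the conditioning route actually yields, splitting on whether $\|\hat{\bc}-\mu\| < \epsilon/2$, is the additive bound $\mathbb{E}[p^N] \le e^{-N\epsilon^2/(8\sigma^2)} + e^{-\epsilon^2/(8\sigma^2)}$, which matches the lemma only at $N=1$ and is dominated by the second term for large $N$. Intuitively, with probability roughly $e^{-\epsilon^2/(8\sigma^2)}$ the single shared vector $\hat{\bc}$ is itself an outlier, in which case all $N$ bad events can occur simultaneously, so no bound decaying geometrically in $N$ can emerge from this conditioning alone. For what it is worth, the paper's proof obtains the power $N$ only by factoring the intersection as if the events $\{\epsilon D \le \hat{\bc}\hat{\bpi} - \hat{\bc}\bpi^i\}$ were fully independent across $i$, silently ignoring the very dependence through $\hat{\bc}$ that you identified; your proposal surfaces the problem honestly but does not repair it.
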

\begin{proof}
For any cost vectors $\bar{\bc}, \hat{\bc}$ with optimal solutions $\bar{\bpi}, \hat{\bpi}$ to $\max\{\bar{\bc}\pi : \pi \in \prooffeasibleregion\}$ and $\max\{\hat{\bc}\pi : \pi \in \prooffeasibleregion\}$, respectively:
\begin{align*}
  \hat{\bc}\hat{\bpi} - \hat{\bc}{\bpi}
  &\le (\hat{\bc}\hat{\bpi} - \hat{\bc}\bar{\bpi}) + (\bar{\bc}\bar{\bpi} - \bar{\bc}\hat{\bpi})
  && \text{(as $\bar{\bpi}$ is optimal for $\bar{\bc}$)}\\
  &= (\hat{\bc} - \bar{\bc})^\top(\hat{\bpi} - \bar{\bpi})\\
  &\le \|\hat{\bc} - \bar{\bc}\| \cdot \|\hat{\bpi} - \bar{\bpi}\| \le D \|\hat{\bc} - \bar{\bc}\|. \numberthis \label{eq:lipschitz}
\end{align*}

We now bound the probability of interest.
\begin{align*}
&\pr\left\{ \max_{i \in [N]} \{ \hat{\bc} \bpi^i \} \le z^*(\hat{\bc}) - \epsilon D \right\}\\
&\quad=\pr\left\{ \epsilon D\leq \hat{\bc} \hat{\bpi} - \hat{\bc} \bpi^i, \ i=1,\ldots,N \right\}\\
&\quad\le \pr\left\{ \epsilon D \leq D \|\hat{\bc} - \bc^i\|, \ i=1,\ldots,N \right\}
&& \text{(by \eqref{eq:lipschitz})}\\
&\quad= \pr\left\{ \epsilon \leq \|\hat{\bc} - \bc^i\|, \ i=1,\ldots,N \right\}\\
&\quad\le \pr\left\{ \epsilon \leq \|\hat{\bc} - \mu\| + \|\bc^i - \mu\|, \ i=1,\ldots,N \right\}
&& \text{(triangle inequality)}\\
&\quad\le \pr\left\{ \frac{\epsilon}{2} \leq \max\left\{\|\hat{\bc} - \mu\|, \|\bc^i - \mu\|\right\}, \ i=1,\ldots,N \right\}. \numberthis \label{eq:max_form_main}
\end{align*}
The last inequality uses the fact that if $a + b \ge \epsilon$, then $\max\{a, b\} \ge \epsilon/2$.

To bound \eqref{eq:max_form_main}, we rewrite using the complement:
\begin{align*}
&\pr\left\{ \frac{\epsilon}{2} \leq \max\left\{\|\hat{\bc} - \mu\|, \|\bc^i - \mu\|\right\}, \ i=1,\ldots,N \right\}\\
&\quad= 1 - \pr\left\{ \bigcup_{i=1}^N \left\{ \max\left\{\|\hat{\bc} - \mu\|, \|\bc^i - \mu\|\right\} < \frac{\epsilon}{2} \right\} \right\}. \numberthis \label{eq:complement}
\end{align*}
Since $\max\{a, b\} < t$ if and only if both $a < t$ and $b < t$, the union in \eqref{eq:complement} can be expanded as:
\begin{align*}
&\bigcup_{i=1}^N \left\{ \max\left\{\|\hat{\bc} - \mu\|, \|\bc^i - \mu\|\right\} < \frac{\epsilon}{2} \right\}\\
&\quad= \bigcup_{i=1}^N \left( \left\{ \|\hat{\bc} - \mu\| < \frac{\epsilon}{2} \right\} \cap \left\{ \|\bc^i - \mu\| < \frac{\epsilon}{2} \right\} \right)\\
&\quad= \left\{ \|\hat{\bc} - \mu\| < \frac{\epsilon}{2} \right\} \cap \left( \bigcup_{i=1}^N \left\{ \|\bc^i - \mu\| < \frac{\epsilon}{2} \right\} \right), \numberthis \label{eq:union_intersection}
\end{align*}
where the last equality uses the distributive property of set operations.

We now compute the probability of this event using independence. Define
\[
\delta = \pr\left(\|\bc - \mu\| \ge \frac{\epsilon}{2}\right)
\le \exp\left( -\frac{\epsilon^2}{8\sigma^2} \right)
\]
by \eqref{eq:distribution_proof}.
Since $\bc^1, \ldots, \bc^N$, $\hat{\bc}$ are independent and identically distributed:
\begin{align*}
\pr\left( \|\hat{\bc} - \mu\| < \frac{\epsilon}{2} \right)
&= 1 - \delta,\\[4pt]
\pr\left( \bigcup_{i=1}^N \left\{ \|\bc^i - \mu\| < \frac{\epsilon}{2} \right\} \right)
&= 1 - \pr\left( \bigcap_{i=1}^N \left\{ \|\bc^i - \mu\| \ge \frac{\epsilon}{2} \right\} \right)\\
&= 1 - \prod_{i=1}^N \pr\left( \|\bc^i - \mu\| \ge \frac{\epsilon}{2} \right) \\
&= 1 - \delta^N,
\end{align*}
where the product follows from independence of $\bc^1, \ldots, \bc^N$.
Since $\hat{\bc}$ is independent of $\bc^1, \ldots, \bc^N$, the two events in \eqref{eq:union_intersection} are independent, and therefore:
\begin{align*}
&\pr\left( \left\{ \|\hat{\bc} - \mu\| < \frac{\epsilon}{2} \right\} \cap \left( \bigcup_{i=1}^N \left\{ \|\bc^i - \mu\| < \frac{\epsilon}{2} \right\} \right) \right)\\
&\quad= \pr\left( \|\hat{\bc} - \mu\| < \frac{\epsilon}{2} \right) \cdot \pr\left( \bigcup_{i=1}^N \left\{ \|\bc^i - \mu\| < \frac{\epsilon}{2} \right\} \right)\\
&\quad= (1 - \delta)(1 - \delta^N). \numberthis \label{eq:success_prob}
\end{align*}

Combining \eqref{eq:complement} and \eqref{eq:success_prob} yields:
\begin{align*}
&\pr\left\{ \frac{\epsilon}{2} \leq \max\left\{\|\hat{\bc} - \mu\|, \|\bc^i - \mu\|\right\} \text{ for all } i \in [N] \right\}\\
&\quad= 1 - (1 - \delta)(1 - \delta^N)\\
&\quad= \delta + \delta^N - \delta^{N+1}\\
&\quad\le \delta + \delta^N. \qedhere
\end{align*}
\end{proof}
The optimality gap in Lemma \ref{lem:bound} is written as $\epsilon D$ in order to have it be proportional to the scale of $\prooffeasibleregion$.
}

To relate this analysis to our use of DSP for searching for a violated Benders cut, recall that to search for a violated cut from the DSP, we solve the problem \eqref{eq:value_approx} in the hopes of finding a violated cut, rather than solving the true dual subproblem \eqref{eq:spdual}.
We thus consider $\pilist$ as the set of the solutions, $\{\pi^*(\bc^1), \pi^*(\bc^2), \dots, \pi^*(\bc^N)\}$, 
and the cost vector as $\hat{c} = h(\xi_k) - T(\xi_k) x^t$. If the solution $x^t$ has been used in a previous replication to generate Benders cuts, then $\hat{c}$ has the same distribution as the distribution of the cost vectors in the subproblems \eqref{eq:spdual} in that previous replication, and hence  $\pilist$ will contain dual optimal solutions from $K$ samples from the same distribution as $\hat{c}$.
Then Lemma \ref{lem:bound_primal_exact} provides a bound on the probability that the
best solution 
in the DSP (i.e., the dual solution that provides the largest right-hand side value of a Benders cut) 
is optimal (i.e., the maximum possible Benders cut violation). 
This implies that if a violated Benders cut exists, one would likely be found in the DSP. This analysis is not directly applicable if the primal solution $x^t$ was not seen in a previous replication. However, the DSP contains dual solutions derived from many other primal solutions, and hence may still be useful for generating Benders cuts, which we verify empirically in Section \ref{sec:Computational study}.

\subsection{Curated DSP}
\label{sub:curateddsp}
Our preliminary experiments indicated that the number of distinct dual extreme points discovered and stored in the DSP
tends to grow rapidly as we solve more SAA replications, even though the underlying dual polytope remains fixed.
This growth makes it increasingly time-consuming to search the DSP for violated cuts.
To address this, we propose to use a curated DSP in which we restrict the set of stored dual solutions to a more manageable size. This restricted set of stored dual solutions is denoted by $\picure$. We use the curated DSP exactly as the DSP is used as described in Section \ref{sub:DSP}. The only difference is that when searching for cuts based on past dual solutions, we search the set $\picure$ rather than the full DSP ($\pilist$).

Our approach for creating a curated DSP is detailed in Algorithm \ref{alg:curated_dsp}.
After each SAA replication,
the full DSP $\pilist$ is partitioned into three sets:
the permanent set ($\piperm$), the trial set ($\pitrial$), and the remaining solutions which we refer to as ``the bench''.
The permanent set includes dual solutions that have 
generated violated cuts in multiple past replications.
Once a solution is added to the permanent set,
it remains there for all subsequent replications.
The trial set consists of newly generated dual solutions discovered 
during the previous replication.
These newly discovered solutions are included in the curated DSP in the following replication.
If a solution in  the trial set successfully identifies a violated cut,
it is added to the permanent set in the next replication.
The curated DSP, $\picure$,
is the union of the permanent and trial sets. 
After finishing solving a replication, 
all dual solutions used to define violated cuts in the replication 
are added to a set called $\piused$.
After each replication, dual solutions are re-evaluated:
those that were already in the DSP and
defined a violated cut (i.e., they are in the set $\piused$) are added to the permanent set $\piperm$.
Any dual solution that was newly generated during the current replication, and was not already part of the full DSP, is added to the trial set $\pitrial$.
This allows new solutions to be tested and potentially included 
in future replications if they prove effective.
For the next replication,
the curated DSP is updated by combining the updated permanent and trial sets.

\begin{algorithm}
\caption{Curated dual solution pool.}
\begin{algorithmic}[1]
\State Initialize \(\pilist\) with dual solutions collected in the first replication
\State Initialize \(\piperm \gets \emptyset\), \(\pitrial \gets \emptyset\) 
\ForAll{replications \(r = 2, \dots, M\)} 
    \State \(\picure \gets \piperm \cup \pitrial\) 
    \State Solve replication using Benders decomposition
    \State Add dual solutions used to generate cuts to \(\piused\) 
    
    \For{\(\pi \in \piused\)} \Comment{Re-evaluation of duals}
  \If{\(\pi \in \pilist\)} 
            \State \(\piperm \gets \piperm \cup \{\pi\}\) 
        \ElsIf{\(\pi \notin \pilist\)}
            \State \(\pitrial \gets \pitrial \cup \{\pi\}\) \Comment{\(\pi\) is newly discovered}
            \State \(\pilist \gets \pilist \cup \{\pi\}\) \Comment{Update full DSP}
        \EndIf
    \EndFor
    
    \State Re-evaluate and update \(\piperm\) and \(\pitrial\) for the next replication
\EndFor
\end{algorithmic}
\label{alg:curated_dsp}
\end{algorithm}

\subsection{Initialization Techniques}
\label{sub:init}

The Benders decomposition method can be significantly accelerated if the main problem is initialized with a good set of Benders cuts.
For linear programs, adding the ``right'' initial cuts can theoretically lead to 
convergence in just one iteration.
 Initializing the main problem with Benders cuts has also been observed to be important to solve stochastic IPs\cite{saharidis2011initialization,fortz2009improved}.
Thus, we investigate techniques for determining a set of Benders cuts to add to the initial Benders main problem. We focus our discussion on initialization techniques for two-stage stochastic IPs, and then discuss adaptations of these ideas that we propose for two-stage stochastic LPs.

In our approach, we aim to leverage dual solutions 
collected from prior replications ($\pilist$) to generate initialization cuts 
for the current replication.
Using all the collected dual solutions to generate a cut for every scenario
would provide an initialization of the algorithm that provides the best possible bound given those dual solutions.
However, this would also lead to a large number of initial cuts in the main problem,
slowing down the solution of the main problem LP relaxations throughout the algorithm.
Thus,  we explore techniques 
for selecting, for each $k \in [K]$, a subset $ \piselk \subseteq \pilist$ of dual solutions from the DSP from which to add Benders cuts.
To guide this selection, we use the first-stage solutions encountered
during Benders decomposition in previous replications.
We denote the set of all feasible solutions found during the solution process in past replications as $\xlist$ (e.g., this includes integer first-stage solutions the solver finds via its internal heuristics and integer solutions discovered at nodes in the branch-and-bound search),
and denote the set of optimal solutions of previous replications as $\xlistopt$. We propose two methods for choosing the initial cuts to add: static initialization and adaptive initialization, described in the following two subsections.

 
\subsubsection{Static Initialization}
\label{ssub:Static Initialization}
The main idea behind this approach that a previous optimal solution has a high likelihood of 
being near-optimal for this replication \cite{kim2015guide}.
Therefore, we want to initialize the algorithm with cuts 
that maximize the objective value of the subproblem dual when evaluated on solutions in our set of previous optimal solutions \( \xlistopt \).
For
each previous optimal solution, \( \overline{x}  \in \xlistopt \),
and for each scenario $k \in [K]$, we identify a dual solution 
from the DSP that maximize the subproblem objective value, i.e., a dual solution that achieves the maximum in \eqref{eq:value_approx}.
If there is a tie between multiple dual solutions,
we randomly select one of them.
We could add the Benders cut corresponding to this dual solution for each $\overline{x} \in \xlistopt$ and each $k \in [K]$.
However, in our experiments,
we found that doing this for all previous optimal solutions yielded many cuts in the initialization that were not useful.
Thus, we only add cuts for the first two optimal solutions 
in $\xlistopt$, leading to at most two cuts per scenario.

\subsubsection{Adaptive Initialization} 
\label{ssub:Adaptive Initialization}

In this initialization technique,
 we use both the set of optimal solutions 
of previous replications, $\xlistopt$, and 
the full set of previously found feasible solutions,
$\xlist$, to identify initial Benders cuts to include in the main model. This technique proceeds in two phases. In phase one, 
we find the solution \( \bestx \) 
which has the 
lowest objective value among the solutions in $\xlistopt$ for the current SAA replication. 
In the second phase, we identify a set of cuts which ensure that 
the objective values of all other solutions in \( \xlist \)
are suboptimal compared to \( \bestx \) when evaluated on the selected set of cuts.
These cuts provide a strong initialization and
ensure that these solutions are not encountered 
later in Benders decomposition,
as, by design, their objective values in the model will be worse 
than the objective value of the initial solution we provide to the model.
The hope is that the set of solutions, $\xlist$,
serves as a representative approximation for the entire feasible 
region $X$, and constructing the cuts 
this way will lead to faster convergence.
In the limit,
if we had all the feasible primal solutions in $\xlist$, 
then this initialization would find the optimal solution. 


\paragraph{Phase One.}%
Let \( \xvalue(x) = c^\top x + \sum_{k \in [K]} p_k Q(x,\xi_k)\) denote the objective value of a first-stage solution \( x \in X \). 
In this phase, our goal is to find the solution 
$\bestx \in \xlistopt$ with the lowest true objective value,
i.e., it satisfies 
\[ \xvalue(\bestx)\le \xvalue(x) \quad \forall x \in \xlist. \]
While this could be accomplished by directly evaluating \( \xvalue(x) \) for all \( x \in \xlist \) this would be computationally expensive as it requires 
solving all scenario subproblems for each solution. 
Therefore, to find \( \bestx \),
we use an approximation of the true objective value, 
\[ \vapp{x, \pilist} = c^\top x + \sum_{k \in [K]} p_k \qapp{x, \pilist}, \]
where, for each $k \in [K]$, $\qapp{x, \pilist}$ is the approximate value of the subproblem 
for scenario $k$ as defined in \eqref{eq:value_approx}.
$\vapp{x, \pilist}$ represents the approximate objective value 
of \( x \),
with the subproblems being evaluated on the DSP (\( \pilist \))
instead of the full feasible region (\( \dfs \)).
This approximation always underestimates the true objective value,
i.e., \( \vapp{x, \pilist} \le \xvalue(x)  \) as 
$\qapp{x, \pilist} \le Q(x,\xi_k)$ for all $\kink$
because \( \pilist \subseteq \dfs \).


Algorithm \ref{alg:phase_one} outlines the process of finding \( \bestx \).
We start by calculating the approximate objective \( \vapp{x, \pilist} \) 
for each solution in $\xlistopt$ 
and then arrange them in ascending order of this approximate objective value.
We then check whether the solution with the smallest value of \(\vapp{x, \pilist} \),
 say \( \overline{x}  \),
 has the lowest true objective value,
by solving the scenario subproblems \eqref{eq:sp_def} with $x=\overline{x}$ and $\xi = \xi_k$ for each scenario $k \in [K]$ to find the true objective value of this solution. 
In this process, we may generate new dual solutions which 
are added to the DSP.
If any new dual solutions are found, this will 
increases the value of \( \vapp{\overline{x}, \pilist} \)
to \( \xvalue(\overline{x}) \), and
\( \overline{x} \) might no longer be the solution with the lowest value of 
\( \vapp{x, \pilist} \) among $\xlistopt$.
Therefore, we re-evaluate $\vapp{x, \pilist}$ for all solutions 
on the updated $\pilist$ to see if \(\overline{x} \)
remains the best candidate.
If so, the algorithm terminates;
otherwise, we select the new minimizer and repeat the process.
When the algorithm terminates it holds that
\[ z(\bestx) = \vapp{\bestx,\pilist} \leq \vapp{x,\pilist} \leq z(x) \quad \forall x \in \phaseonexlist \]
and hence 
 we have found the solution with the lowest true objective value in the set $\phaseonexlist$.
This algorithm is guaranteed to converge in at most $|\phaseonexlist|$ iterations because in each iteration we expand $\pilist$ such that $z(x) = \vapp{x,\pilist}$ for a new $x \in \phaseonexlist$.

\begin{algorithm} 
  \caption{Adaptive initialization - phase one.}
\begin{algorithmic}[1]
\State Evaluate \( \vapp{x, \pilist} \) for all \( x \in \phaseonexlist \)
\Repeat
    \State \( \bestx \gets \argmin \{ \vapp{x, \pilist} : x \in \phaseonexlist \} \)
    \State Solve subproblem \eqref{eq:sp_def}, with $x=\bestx, \xi =\xi_k$ for each $k \in [K]$
   \State Update \( \pilist \) with newly found dual solutions
\State Re-evaluate \( \vapp{x, \pilist} \) for all \( x \in \phaseonexlist \) 
    \Until{\( \argmin \{ \vapp{x, \pilist}: x \in \phaseonexlist \} = \bestx \)}
\State \Return \( \bestx \)
\end{algorithmic}
\label{alg:phase_one}
\end{algorithm}
\paragraph{Phase two.}%
The goal of phase two is to identify a (hopefully small)  
set \( \piselk \subseteq \pilist \) for each scenario $k \in [K]$ such that, when the objective value
of each solution in \( \xlist \) is evaluated using the Benders cuts defined by these solutions, the evaluation is higher than \( \xvalue(\bestx) \).
As a result, when the Benders algorithm proceeds, none of these solutions will be identified as a candidate solution that might be better than $\bestx$. In this process, since we are considering more solutions ($\xlist$) than we considered in phase one ($\xlistopt$), we may find a solution $x \in \xlist$ that has a better objective value than $\bestx$ identified in phase one, in which case we update $\bestx$.



Given a collection of sets of dual solutions \( \piselk \subseteq \pilist \) for $k \in [K]$, we define the lower bound approximation of the objective value of \( x \), 
$$ \vsel{x} := c^\top x + \sum_{k} p_k \qsel{x},$$
where, for each $k \in [K]$, \( \qsel{x} \) is the objective of the subproblem \( k \) 
evaluated using the set of dual solutions \( \piselk \): 
\[ \qsel{x} := \max_{{\pi \in \piselk}} \pi^\top(h(\xi_k) - T(\xi_k) x). 
\]
Note that for all scenarios, $\kink$, $\piselk \subseteq \pilist \subseteq \dfs$ and so
\begin{align*}
\qsel{x} \leq \qapp{x, \pilist} \leq Q(x,\xi_k)
\end{align*}
for all $x \in X$.
Thus,
\[ \vsel{x} \leq \vapp{x, \pilist} \leq \xvalue(x) \quad \forall x \in X. \]
Using this notation, we restate the primary goal of phase two which is to find sets of dual solutions \( \pisel \) that satisfy
\begin{equation}
  \xvalue(\bestx) \le \vsel{x}  \quad \forall x \in \xlist.
\label{eq:convergence_phase2}
\end{equation}
%
The pseudocode for phase two is presented in Algorithm \ref{alg:phase2_ip}.
To initialize \( \pisel \), we first add the dual solutions obtained 
by solving subproblems for \( \bestx  \).
This ensures that $\vsel{\bestx} = \xvalue(\bestx)$.
For IPs, we first solve the LP relaxation to add initialization cuts and thus we also include the dual solutions which defined active cuts 
at the optimal solution of the LP relaxation in the sets $\pisel$.
Next, we evaluate \( \vsel{x} \) for all \( x \in \xlist \).
If the solution with least value of \( \vsel{x} \), say \( \closex \),
has the same objective value as \( \xvalue(\bestx ) \), then we have converged having achieved our goal
\eqref{eq:convergence_phase2}.

However, if $\vsel{\closex}< \xvalue(\bestx)$ (line 7),
this implies that we need to add more cuts to $\pisel$ to increase $\vsel{\closex}$ above $\xvalue(\bestx)$.
To do so, we first calculate \(\vapp{\closex, \pilist} \).
If \(\vapp{\closex, \pilist} \geq \xvalue(\bestx)\), 
then we know there are cuts in the DSP which can be added to $\pisel$ to achieve the goal of
\begin{equation}
\label{eq:thegoal}
\vsel{\closex}\geq \xvalue(\bestx) .
\end{equation}
Indeed, this would be achieved by adding the dual solution from $\pilist$ that achieves the maximum in \eqref{eq:value_approx} to $\piselk$ for each $\kink$. However, we heuristically try to minimize the number of dual solutions that are added to achieve \eqref{eq:thegoal}.
We arrange the scenarios in decreasing order of values of 
\( \qapp{\closex, \pilist} - \qsel{\closex}\).
This quantity tells us how much \( \qapp{\closex, \piselk} \) 
will increase if we add the dual solution which achieves 
\( \qapp{\closex, \pilist} \) to \( \piselk \).
For each scenario $k$ in this order, we add a dual solution from $\pilist$ that achieves the maximum in \eqref{eq:value_approx} to $\piselk$, and stop as soon as we achieve \eqref{eq:thegoal}.

If \( \vapp{\overline{x}, \pilist} < \xvalue(\bestx)\) 
(line \ref{alg:else}),
this implies the dual solutions in $\pilist$ are not sufficient for adding to $\pisel$ to achieve \eqref{eq:thegoal}. Indeed,
$\overline{x}$ may even have a lower true objective than the current $\bestx$.
In this case, we solve the subproblems \eqref{eq:sp_def} for $\overline{x}$ and each scenario $k \in [K]$ to calculate
$\xvalue(\closex)$. 
For each $k \in [K]$, we add to $\piselk$ and $\pilist$, the optimal dual solution from the subproblem.
If $\xvalue(\closex) < \xvalue(\bestx)$,
we update the current best solution to be \( \closex \).
Once the algorithm converges,
the final \( \bestx \) is provided as an initial feasible solution when solving the SAA replication.

%

\begin{algorithm} 
  \caption{Adaptive initialization - phase two.}
\begin{algorithmic}[1]
\State \textbf{Input:} $\bestx$ from phase one
\State Initialize \( \piselk \gets \emptyset \) for all \( k \in [K] \)
\State Add $\pi$ which achieves the maximum in \eqref{eq:value_approx}  at \( \bestx \) to \( \piselk \) for each $k \in [K]$
\State Add active LP cuts to \( \pisel \) \label{alg:lpcuts}
\State $z^{WS} \gets  \vapp{\bestx, \pilist}$ 
\While{True}
\State Evaluate \( \vsel{x} \) for all \( x \in \xlist \)
\State \( \overline{x} \gets \argmin \{ \vsel{x} : x \in \xlist \} \)
    \If{\( \vsel{\overline{x}} < z^{WS} \)}
        \State Compute \( \vapp{\closex, \pilist} \)
        \If{\( \vapp{\closex, \pilist} \geq z^{WS} \)}
    \State Add enough duals from DSP to \( \pisel \) so that \( \vsel{\closex} \ge z^{WS} \)
        \Else \label{alg:else}
            \State Solve \eqref{eq:sp_def} for \( x=\overline{x} \) and $\xi = \xi_k$ for each $k \in [K]$ to compute  $\xvalue(\overline{x})$
            \State Add the optimal dual solution from \eqref{eq:sp_def}  to \( \piselk \) and $\pilist$ for each $k \in [K]$
            \If{\( \xvalue(\overline{x}) < \xvalue(\bestx) \)}
                \State Update \( \bestx \gets \overline{x}, z^{WS} \gets \xvalue(\overline{x}) \)
            \EndIf
        \EndIf
        \Else
        \State Break
    \EndIf
\EndWhile
\State \Return \(\bestx, \pisel \)
\end{algorithmic}
\label{alg:phase2_ip}
\end{algorithm}

Adaptive initialization as described is designed primarily for initializing Benders decomposition when solving stochastic IPs.
For stochastic LPs, in preliminary experiments we found
that too much time is spent doing this initialization process 
relative to the savings it yields in the eventual algorithm. 
Thus,  
for LPs, we make some changes to the adaptive initialization.
In phase one, we evaluate the previously collected optimal solutions (\( \xlistopt \)) on the DSP.
Let $\closex$ denote the solution with the lowest approximate 
objective value \( \vapp{x, \pilist} \). 
Rather than solving scenario subproblems to
verify whether $\closex$ has the lowest 
true objective value (line 4 of Algorithm \ref{alg:phase_one}),
we directly declare $\closex$ to be $\bestx$. Phase two begins with this solution and proceeds as in Algorithm \ref{alg:phase2_ip}, with line \ref{alg:lpcuts} skipped as it is not relevant for LPs. The next change is after the else condition on line \ref{alg:else}, which is run when $\vapp{\closex, \pilist} < z^{WS}$. In the LP case, we do not solve subproblems at this point. Instead, we update $\bestx$ to $\closex$, add the dual solution that achieves the maximum in \eqref{eq:value_approx}  at \( \bestx \) to \( \piselk \) for each $k \in [K]$, and then terminate the initialization.

\subsection{Extensions}
\label{sub:extensions} 

In this section, we describe how our ideas for accelerating Benders decomposition using information from previous replications can be adapted for solving problems without relatively complete recourse (Section \ref{ssub:relatively_complete_recourse}) and for the single-cut version of Benders decomposition (Section \ref{ssub:single-cut}).

\subsubsection{Relatively Complete Recourse}
\label{ssub:relatively_complete_recourse} 

In the absence of relatively complete recourse, the reformulation
\eqref{eq:reformulation} that is the basis of Benders decomposition  needs to be augmented with Benders feasibility cuts \cite{Benders1962}:
\( (h(\xi_k) - T(\xi_k) x)^\top r \le 0 \quad \forall r \in \mathcal{R}, k \in [K], \)
where $\mathcal{R}$ is the set of extreme rays of the dual feasible region $\dfs$.

The Benders decomposition algorithm is modified such that if the subproblem \eqref{eq:sp_def} is infeasible, then an extreme ray of the dual feasible region is identified and used to add a Benders feasibility cut to the main problem. 

Since the dual feasible region remains fixed according to our assumptions that $W$ and $c$ are fixed,
the set of rays also remains constant across replications.
Thus, as we seek to solve a sequence of SAA replications,
we can store the dual extreme rays that are identified in a DSP just as we do for Benders optimality cuts. 
When we obtain a main problem solution, we search the DSP for violated cuts (both optimality and feasibility cuts).
If no violated cut is found for any scenario,
we proceed by solving the subproblems to generate a feasibility or optimality cut.
Curating the DSP follows the same principles as we have discussed in 
Section \ref{sub:curateddsp}.

For the static and adaptive initialization method, we can initialize the Benders optimality cuts exactly as described previously. In adaptive initialization, if a scenario subproblem \eqref{eq:sp_def} is infeasible, then we identify an extreme ray of the dual feasible region and add the associated Benders feasibility cut as an initial cut. For adding Benders feasibility cuts, we would check the DSP to determine if any primal solutions from previous replications $\xlist$ are violated by any of the associated feasibility cuts, and add at least one such cut for each solution in $\xlist$ that violates one of these cuts.



\subsubsection{Single-Cut}
\label{ssub:single-cut} 
In single-cut implementation of Benders decomposition,
we introduce a variable $\Theta$
which represents the expected value of 
the subproblem objective.
The single-cut Benders decomposition algorithm is based on a reformulation that includes Benders cuts of the form
\begin{equation}
  \Theta \ge \sum_{k=1}^K p_k (h(\xi_k) - T(\xi_k) x)^\top \pi_k, 
  \label{eq:singlecut}
\end{equation}
where $ \pi_k \in \vertex$ for each $\kink$.

In standard Benders decomposition, given a main problem solution $\hat{x}$, a Benders cut of the form \eqref{eq:singlecut} is found by solving the scenario subproblem \eqref{eq:sp_def} for each scenario $\kink$, 
and then using the dual solution $\pi_k$ from subproblem $k$ for each $\kink$ to define the cut \eqref{eq:singlecut}.
The single-cut version uses fewer variables in the main problem and only adds one cut per iteration. Hence, the main problem typically is more compact and hence solves faster than in the multi-cut approach, but
it often requires more iterations to reach optimality.

Our proposal for using the DSP and the curated DSP directly adapts to the single-cut version. 
For every scenario $\kink$, at a solution $x^t$,
we find the dual solution with maximum $\qapp{x^t, \pilist}$ by solving \eqref{eq:value_approx}
and then aggregate them to generate a cut \eqref{eq:singlecut} using these dual solutions.
If it is violated by $x^t$, we add it to the main problem and continue with the algorithm. 
Otherwise, we solve all scenario subproblems and generate a cut using those dual solutions (and update the DSP with the newly identified dual solutions). 
We expect that this would speed up the Benders iterations because the full set of scenario subproblems do not need to be solved at every iteration. 

To adapt static initialization in this context,
for each previously collected optimal solution, 
we identify dual solutions from the DSP that 
maximize the subproblem objective value by solving \eqref{eq:value_approx} for each $k \in [K]$.
Aggregating these yields a cut for that primal solution, and  
repeating this procedure for all previously collected optimal solutions 
leads to a set of initial cuts that can be added 
to the main problem.

For adaptive initialization, phase one remains unchanged,
where the primary goal is to identify 
the best solution ($\bestx$) to provide as an initial feasible solution to the branch-and-cut algorithm. 
We follow the steps in Algorithm \ref{alg:phase_one} to do this. 
To adapt this method for single-cut, we also maintain $\pisel$ in phase one.
Now, whenever we evaluate any solution, say $\closex$,
on the DSP to estimate its value,
the dual solutions for each scenario $k \in [K]$ that correspond to $\qapp{\closex,
\pilist}$ are stored in $\piselk$.
Furthermore, any dual solutions that are found when solving 
the scenario subproblems are also incorporated into $\pisel$.
In phase two, we follow the same steps as outlined
in Algorithm \ref{alg:phase2_ip}.
This algorithm outputs both an updated $\pisel$ 
and the initial candidate solution, $\bestx$. 
In the multi-cut version, we would add a Benders cut to the main problem for every dual solution in $\piselk$ for each scenario $\kink$.
For single-cut, the main problem is first initialized with a cut of the form \eqref{eq:singlecut}, with the $\pi_k$ dual solutions defined according to the optimal dual solution of subproblem \eqref{eq:sp_def} with $x=\bestx$ and $\xi=\xi_k$.
To determine which additional cuts to initialize the main problem with,
we iterate through each primal solution  $\closex \in \xlist$. For each such $\closex$, we find the maximum value of the right-hand side of the currently added cuts on $\closex$. 
Let's call this value $\hat{\Theta}(\closex)$. 
If $c^\top \closex + \hat{\Theta}(\closex) $ exceeds $z(\bestx)$, we do nothing as the current cuts are sufficient to ensure that the objective value of $\closex$ in the model is higher than the objective value of $\bestx$ in the model.
Otherwise,  we find the dual solution which achieves $\qapp{\closex, \pisel}$
for each scenario $k \in [K]$ and use these to define a cut. 
This cut is added to the main problem, ensuring that, as a result of phase two, the updated $c^\top \closex + \hat{\Theta}(\closex) $ will be at least $z(\bestx)$.

\section{Computational Study}
\label{sec:Computational study}

This section presents a comprehensive computational study to evaluate the 
effectiveness of our proposed information reuse strategies. 

\subsection{Experimental Setup and Implementation Details}

We compare the following approaches for reusing information from previous solves of a replication:
\begin{itemize}
  \item \textbf{Baseline:} This approach represents the standard Benders decomposition algorithm. The only information reused from previous replications is that for IPs an initial feasible solution is provided based on the optimal solution of the most recent replication, as described at the beginning of Section \ref{sec:techniques}. No information is reused for LPs. 
  \item \textbf{DSP:} This approach stores the dual solutions collected in previous SAA replications and uses them to generate cuts as described in Section \ref{sub:DSP}. 
  \item \textbf{Curated DSP:} This approach refines the DSP by maintaining a smaller pool of dual solutions as described in Section \ref{sub:curateddsp}. 
  \item \textbf{Static init:} This approach extends the curated DSP approach by initializing the algorithm with cuts generated through static initialization as described in Section \ref{ssub:Static Initialization}. 
  \item \textbf{Adaptive init:} This approach extends the curated DSP approach by initializing the algorithm with cuts generated through adaptive initialization as  described in Section \ref{ssub:Adaptive Initialization}.
\end{itemize}

For the computational experiments,
we solve 26 replications of problem \eqref{eq:saa}, each with an independently drawn set of scenarios. 
The first replication is used for data collection and is identical for all compared methods. Thus, to compare the impact of different strategies for reusing information, 
all results presented in the following sections 
are based on the  25 SAA replications excluding the first one.
Every replication is given a time limit of one hour for each method.
To reduce the time required to run the experiments, we run less than the full 25 replications for the baseline method because it is significantly slower than the other methods. This method is only tested on the 2nd, 14th and 26th replications, and results reported are averaged over these three runs instead of the full 25 as in the other methods. Since all replications are independent and the only information used from previous replications is an initial feasible solution for IPs, and nothing for LPs, an average of a metric taken over this subset is expected to be a close approximation of the average over the full set of replications.

We implemented Benders decomposition in Python using
Gurobi 10.0.1 as the optimization solver for both LPs and IPs.
We build a main problem model and
a single subproblem model which is updated
 with the current primal solution ($\hat{x}$)
and scenario data ($\xi$) whenever
we need to look for a cut,
saving model building time and enabling warm-starting of the subproblems.

The DSP is implemented as follows: each dual solution $\pi$ is stored in an indexed array and assigned a unique integer index for efficient retrieval.
We use Python's hash function to determine if a dual solution obtained after solving a subproblem is already present in the DSP or not.
When searching the DSP for dual solutions that potentially generate a violated cut, we must efficiently evaluate problem \eqref{eq:value_approx} for each scenario $k \in [K]$: determine which dual solution from $\pilist$ provides the tightest Benders cut for a given first-stage solution $x^t$.
To avoid repeatedly computing $\pi^\top h(\xi_k)$ for all dual solutions $\pi \in \pilist$ and scenarios $k \in [K]$ for every primal solution $x^t$, we precompute and store these terms in a matrix of dimension $|\pilist| \times K$ at the beginning of each SAA replication using a single matrix multiplication operation.
When evaluating a candidate first-stage solution $x^t$, we compute $\pi^\top(-T(\xi_k) x^t)$ for all dual solutions simultaneously using vectorized matrix operations via the \texttt{linalg} library in NumPy, yielding a vector of length $|\pilist|$.
We then solve problem \eqref{eq:value_approx} by combining this vector with the precomputed $\pi^\top h(\xi_k)$ terms and finding the maximum for each scenario using \texttt{numba} to efficiently calculate the $\argmax$.
As discussed in the beginning of Section \ref{sec:techniques}, whenever we solve a new SAA replication (after the first),
we provide the solver with the optimal primal solution of the previous replication as a candidate solution.
To do this, we solve the scenario subproblems given the new scenario data and this candidate solution $\hat{x}$,
and then provide this solution (\( \hat{x}, \{Q(\hat{x},\xi_k)\}_{\kink}  \))
to the solver.
This initialization is done for all methods except for the adaptive initialization, 
in which the method generates its own candidate solution. We do this initialization even for the baseline method in order to better illustrate the impacts of the other techniques we propose for reusing information.

We terminate Benders decomposition when the optimality gap percentage is less than $10^{-4}$:
$(U^t - L^t)/L^t * 100 \%\le 10^{-4}.$
A candidate Benders cut of the form $\theta_k \ge \alpha_k - \beta_k x$
is considered violated by the current main problem solution
$(\hat{x}, \{\hat{\theta}_k\}_{\kink})$
if it satisfies
\[
\hat{\theta}_k - Q(\hat{x},\xi_k) \geq 10^{-5} \left\| \left( 1, \alpha_k, \beta_k \right) \right\|.
\]
This relative cut violation threshold ensures that the violation exceeds a small tolerance scaled by the norm of the cut’s coefficients.

For the branch-and-cut method (Section \ref{sub:Stochastic IPs}),
we implement the algorithm using a callback provided by the solver Gurobi. 
Whenever the algorithm finds an integer feasible solution, the callback is called 
to verify if this solution is feasible to the true problem.
In this callback, if we are using the DSP, we first 
check the DSP for violated cuts. If no violated
cuts are found in the DSP, 
we solve subproblems  to check if any cuts are violated. 
If DSP is not employed, then we directly solve subproblems 
to check for violated cuts.
Due to the presence of callbacks,
we set the \texttt{lazyconstraints} parameter to 1,
and that avoids reductions and transformations which are incompatible 
with lazy constraints.

For stochastic IPs, we first solve the LP relaxation of the problem via Benders decomposition.
Benders cuts that are active after solving the LP relaxation
are retained in the main problem and used as part of the formulation that is given to the solver when it starts the the branch-and-cut algorithm.
The initialization methods add cuts in addition to these cuts.
If we deploy any initialization method for the IP,
then the same method is also used to initialize 
the LP relaxation of the problem.
Initialization methods are always used in conjunction 
with curated DSP to check for violated cuts.
Only duals from curated DSP are considered to generate 
initialization cuts.

The experiments were run on two Intel Core i7 machines: 
an i7-9700 CPU at 3.00GHz and an i7-10700 CPU at 2.90GHz.

\subsection{Test Problems}

The study investigates the performance of all these methods
on three problem classes: stochastic capacitated facility location, stochastic network design, and stochastic unit commitment. We describe these problems at a high level below. Appendix \ref{sec:Appendix} provides the detailed formulation of each problem.
 All test instances have 400 scenarios,
unless mentioned otherwise.

\paragraph{Capacitated Facility Location Problem (CFLP)}

The CFLP has a set of facilities and a set of customers with uncertain demands. 
The objective of the problem is to minimize the total expected cost of building 
and operating facilities while ensuring that customer demand 
is met.
In the first-stage, we decide which facilities to open.
Every facility has a setup cost and also a capacity.
In the second-stage, we decide how to allocate goods 
from open facilities to satisfy customer demand as much as possible.
Unmet demands are  penalized, and thus this model has relatively complete recourse.

Our instances use the data from \cite{cornuejols1991comparison} 
and the extension of these to create stochastic programming instances from 
\cite{dumouchelle2022neur2sp}. 
They create 
a stochastic variant by first generating the first-stage costs and
capacities, followed by generating scenarios 
by sampling $K$ demand vectors
using the distributions defined in
\cite{cornuejols1991comparison}.
In Tables \ref{tab:cflp_ip_instances} and \ref{tab:cflp_lp_instances},
we list the number of facilities and sets of customers we consider 
for the IP and LP instances, respectively. We use larger test instances for the LP instances to provide a more difficult test for that problem class.

\begin{table}[h]
\begin{minipage}[b]{0.45\linewidth}
\centering
\begin{tabular}{cc}
\hline
Facilities & Customers \\
\hline
15 & \( \{105, 125, 215\} \) \\
25 & \( \{95, 105, 185\} \) \\
35 & \( \{105, 185\} \) \\
55 & \( \{125\} \) \\
75 & \( \{105\} \) \\
\hline
\end{tabular}
\caption{CFLP instance data for IPs.}
\label{tab:cflp_ip_instances}
\end{minipage}
\hspace{0.5cm}
\begin{minipage}[b]{0.45\linewidth}
\centering
\begin{tabular}{cc}
\hline
Facilities & Customers \\
\hline
25 & \( \{305, 355, 405, 455, 495\} \) \\
55 & \( \{305, 355, 405, 455, 495\} \) \\
85 & \( \{305\} \) \\
\hline
\end{tabular}
\caption{CFLP instance data for LPs.}
\label{tab:cflp_lp_instances}
\end{minipage}
\end{table}
\paragraph{Multi Commodity Network Design Problem (CMND)}

The CMND problem is defined on a directed network comprising of nodes ($N$), arcs ($A$), and commodities ($\commodities$).
Each commodity must be routed from an origin node to a destination node in the network.
The arcs are characterized by installation costs and capacity. 
The objective is to determine a subset of arcs for installation 
with the goal of minimizing the expected total cost.
In the first-stage,
binary decisions are made for each arc to decide if it will be installed or not.
In the second-stage,
after the demand for each commodity is revealed, 
routing decisions are made for how to route the realized commodity amounts in the network.

We use the test instances in \cite{crainic2011progressive}.
The instances were originally proposed for the deterministic
fixed charge capacitated multi-commodity network design problem \cite{ghamlouche2003cycle}.
To generate stochastic programming instances, we adopt the approach outlined in \cite{jia2021benders}.
They use the techniques described in \cite{song2014chance} to create random samples for the demands of various commodities.
In each scenario, the demand of a commodity 
follows a normal distribution with the mean set to
the demand in the deterministic instances and standard deviation of 0.1 times the mean. To generate instances, we start with base instances given in Tables 
\ref{tab:ip_results_cflp_part2} and \ref{tab:ip_results_cmnd_part2}.
These tables give
the number of nodes, arcs and commodities 
for each base instance.
For each base instance, 
we run experiments on three versions of these instances that differ in the ratio of 
fixed costs to variable costs.
These are given by r02.1, r02.2, and r02.3 in the actual dataset documentation for the II base instance.

\begin{table}[h]
\begin{minipage}[b]{0.5\linewidth}
\centering
\begin{tabular}{cccc}
\hline
Problem Set & \(|N|\) & \(|A|\) & \(|\commodities|\) \\ \hline
II           & 10      & 35      & 25      \\
III          & 10      & 35      & 50      \\
IV           & 10      & 60      & 10      \\ \hline
\end{tabular}
\label{tab:cmnd_ip_instances}
\caption{CMND instance data for IPs.}
\end{minipage}
\hspace{0.5cm} 
\begin{minipage}[b]{0.5\linewidth}
\centering
\begin{tabular}{cccc}
\hline
Problem Set & \(|N|\) & \(|A|\) & \(|\commodities|\) \\ \hline
VI          & 10      & 60      & 50      \\
IX          & 10      & 83      & 50      \\
X           & 20      & 120     & 40      \\ \hline
\end{tabular}
\label{tab:cmnd_lp_instances}
\caption{CMND instance data for LPs.}
\end{minipage}
\end{table}

\paragraph{Stochastic Unit Commitment Problem (UC)}

The UC problem schedules a fleet of thermal generators over a time horizon to meet uncertain electricity demand while minimizing expected total cost.
In the first-stage, binary decisions determine the on/off commitment status for each generator in each time period, subject to minimum up-time and down-time constraints that couple decisions across periods.
In the second-stage, once demand is realized, continuous dispatch variables determine the power output levels for all committed units, subject to generator capacity limits and the system-wide demand balance constraint.
To guarantee relatively complete recourse, unmet demand is permitted at each time period but penalized at a sufficiently high cost.
For a comprehensive survey on mixed-integer programming formulations and solution methods for UC problems, we refer the reader to \cite{knueven2020mixed}.

For computational experiments, we adopt the classical thermal UC instance generation procedure
of Borghetti et al.~\cite{borghetti2003lagrangian}, distributed through Beasley's OR-Library~\cite{beasley1990or}.
It produces single-bus UC instances with a 2-day horizon (48 hourly periods) and fleets whose size we vary between 10, 20, 30, and 40 thermal units.
We use difficulty levels 2 and 3 in the instance generator,
which differ primarily in the minimum up- and down-time requirements.
Unlike CFLP and CMND, we use the same problem sizes for both LP and IP variants of UC.
From preliminary experiments, we noticed that we can solve instances of the same size for both LP and IP, as the LP relaxation is close to optimal for the IP and generates strong initial cuts.

To construct stochastic instances, we generate $K$ demand scenarios by perturbing the base load $\bar B_t$ at each time period $t$ with independent, zero-mean Gaussian noise.

\subsection{Results}
\label{sub:Results}

\begin{table}[ht]
\centering
\begin{tabular}{ll}
\hline
\textbf{Metric} & \textbf{Description} \\
\hline
\someformat{Total T} & Time taken to optimize a SAA replication \\
\someformat{LP T} & Time taken to solve the LPs to optimality \\
\someformat{Init T} & Time taken to initialize the problem. For IPs, this includes selecting \\
&active cuts from the LP and finding other initial cuts \\
\someformat{IP T} & Time taken to solve the IPs to optimality, excluding \someformat{LP T} and \someformat{Init T} \\
\someformat{Iterations} & (LP only) \# of iterations needed to solve the problem \\
\someformat{SP count} & \# of times subproblems are solved to generate cuts \\
\someformat{DSP T} & Time taken to search for a violated cut in the DSP \\
\someformat{SP T} & Time taken to solve subproblems and find a violated cut \\
\someformat{Cut T} & Time taken to find violated cuts to add to the main problem \\
\someformat{Nodes} & (IP only) \# of branch-and-bound nodes \\
\someformat{Root gap} & (IP only) Gap closed at the root node of the IP from initialization cuts \\
\someformat{Callback calls} & (IP only) \# of calls to the callback to check an integer feasible solution \\
\hline
\end{tabular}
\caption{Benders decomposition metrics.}
\label{tab:optimization_metrics}
\end{table}

Table \ref{tab:optimization_metrics}
summarizes the metrics used to evaluate different methods. 
Each metric represents the arithmetic mean of the quantity 
calculated over the 25 SAA replications after the first replication.
For results that are aggregated over multiple instances,
like in Tables \ref{tab:lp_results_cflp} - 
\ref{tab:ip_results_cmnd_part2}, 
the quantity presented is a shifted geometric mean 
over all instances of that test problem, with a shift of 1 being applied.
Geometric mean is chosen as the relevant mean when summarizing results over different base instances
because there might be a lot of variation in the values for 
different instances. Arithmetic mean is used for summarizing results across the 25 replications of an individual instance because we expect the values to remain 
more consistent over the 25 replications.
All the time-related measurements are done in seconds.

For IPs,
note that \someformat{LP T} represents the time spent 
solving the initial LP relaxation before proceeding with the IP.
\someformat{IP T} notes the time taken to solve the IP after initializing, 
and \someformat{Total T} includes the time taken for this entire process. 
For each IP instance, \someformat{Total T} = \someformat{Init T} + \someformat{LP T} + \someformat{IP T}\footnote{This equality holds on a per-instance basis, but does not hold for the summary statistics because we use geometric mean to summarize across instances.}.

\someformat{DSP T} and \someformat{SP T} describe 
how much time is being spent to generate Benders cuts 
via the DSP and by solving subproblems.
\someformat{Cut T} tells us the total time taken to find and add Benders cuts: for each instance,
\someformat{Cut T} = \someformat{DSP T} + \someformat{SP T}.


\subsubsection{LP Results}
\label{sub:LP results}

\begin{table}[h]
\begin{center}
\begin{tabular}{lrrrrrrr}
\hline
Method       & \multicolumn{1}{c}{\someformat{Total T}} & \multicolumn{1}{c}{\someformat{Iterations}} & \multicolumn{1}{c}{\someformat{SP count}} & \multicolumn{1}{c}{\someformat{Cut T}} & \multicolumn{1}{c}{\someformat{DSP T}} & \multicolumn{1}{c}{\someformat{SP T}} & \multicolumn{1}{c}{\someformat{Init T}} \\
\hline
Baseline     & 213.9                       & 81.2                           & 81.2                          & 187.4                        & -                       & 187.4                    & -                       \\
DSP          & 62.1                        & 66.4                           & {8.0}                           & 50.8                         & 36.0                      & {14.3}                     & -                        \\
Curated DSP  & 46.3                        & 58.6                           & 8.4                           & 37.8                         & 22.3                      & 15.2                     & -                        \\
Static init  & 23.3                        & 25.1                           & 8.4                           & 20.7                         & 5.4                       & 15.2                     & {0.1}                        \\
Adaptive init & {22.2}                        & {20.7}                           & 8.5                           & {18.7}                         & {3.2}                       & 15.4                     & 1.3                        \\
\hline
\end{tabular}
\end{center}
\caption{LP results: CFLP.}
\label{tab:lp_results_cflp}
\end{table}

\begin{table}[h]
\begin{center}
\begin{tabular}{lrrrrrrr}
\hline
Method       & \multicolumn{1}{c}{\someformat{Total T}} & \multicolumn{1}{c}{\someformat{Iterations}} & \multicolumn{1}{c}{\someformat{SP count}} & \multicolumn{1}{c}{\someformat{Cut T}} & \multicolumn{1}{c}{\someformat{DSP T}} & \multicolumn{1}{c}{\someformat{SP T}} & \multicolumn{1}{c}{\someformat{Init T}} \\
\hline
Baseline     & 616.9                                   & 131.4                                     & 131.4                         & 531.2                                   & -                                                 & 531.2                                             & -                                                \\
DSP          & 89.7                                    & 75.2                                      & {10.0}                          & 70.2                                    & 26.3                                                & {40.3}                                              & -                                                \\
Curated DSP  & 63.4                                    & 66.2                                      & 10.2                          & 47.3                                    & 4.9                                                 & 41.4                                              & -                                                \\
Static init  & {49.9}                                    & 31.5                                      & 10.4                          & 44.4                                    & 2.0                                                 & 42.2                                              & {0.1}                                                \\
Adaptive init & 51.2                                    & {21.9}                                      & 10.5                          & {44.0}                                    & {1.2}                                                 & 42.7                                              & 3.6                                                \\
\hline
\end{tabular}
\end{center}
\caption{LP results: CMND.}
\label{tab:lp_results_cmnd}
\end{table}

\begin{table}[h]
\begin{center}
\begin{tabular}{lrrrrrrr}
\hline
Method       & \multicolumn{1}{c}{\someformat{Total T}} & \multicolumn{1}{c}{\someformat{Iterations}} & \multicolumn{1}{c}{\someformat{SP count}} & \multicolumn{1}{c}{\someformat{Cut T}} & \multicolumn{1}{c}{\someformat{DSP T}} & \multicolumn{1}{c}{\someformat{SP T}} & \multicolumn{1}{c}{\someformat{Init T}} \\
\hline
Baseline     & 265.0                       & 44.8                           & 44.8                          & 71.7                         & -                       & 71.7                     & -                       \\
DSP          & 159.0                       & 47.5                           & 6.2                           & 36.4                         & 26.5                    & 10.3                      & -                       \\
Curated DSP  & 135.9                        & 46.9                           & 6.9                           & 22.6                         & 11.1                     & 11.4                      & -                       \\
Static init  & 56.7                        & 33.6                           & 7.2                           & 16.4                         & 4.6                     & 12.0                      & 0.8                     \\
Adaptive init & {54.0}                        & {31.5}                           & {7.4}                           & {15.6}                         & {3.6}                     & {12.3}                      & {4.0}                     \\
\hline
\end{tabular}
\end{center}
\caption{LP results: UC.}
\label{tab:lp_results_uc}
\end{table}
\begin{figure}[h]
  \centering
  \begin{subfigure}{0.48\textwidth}
    \centering
    \includegraphics[width=\linewidth]{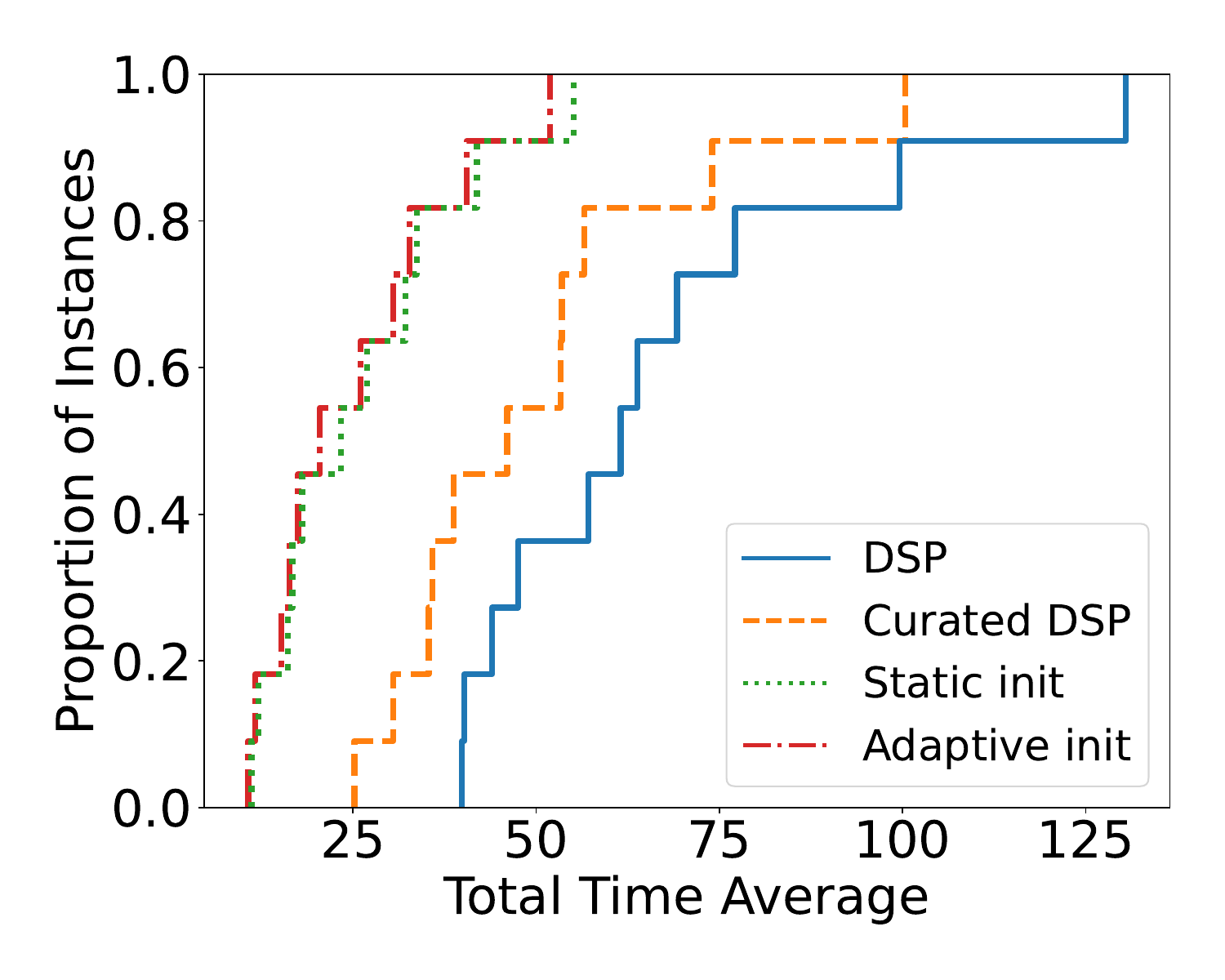}
    \caption{CFLP}
  \end{subfigure}
  \hfill
  \begin{subfigure}{0.48\textwidth}
    \centering
    \includegraphics[width=\linewidth]{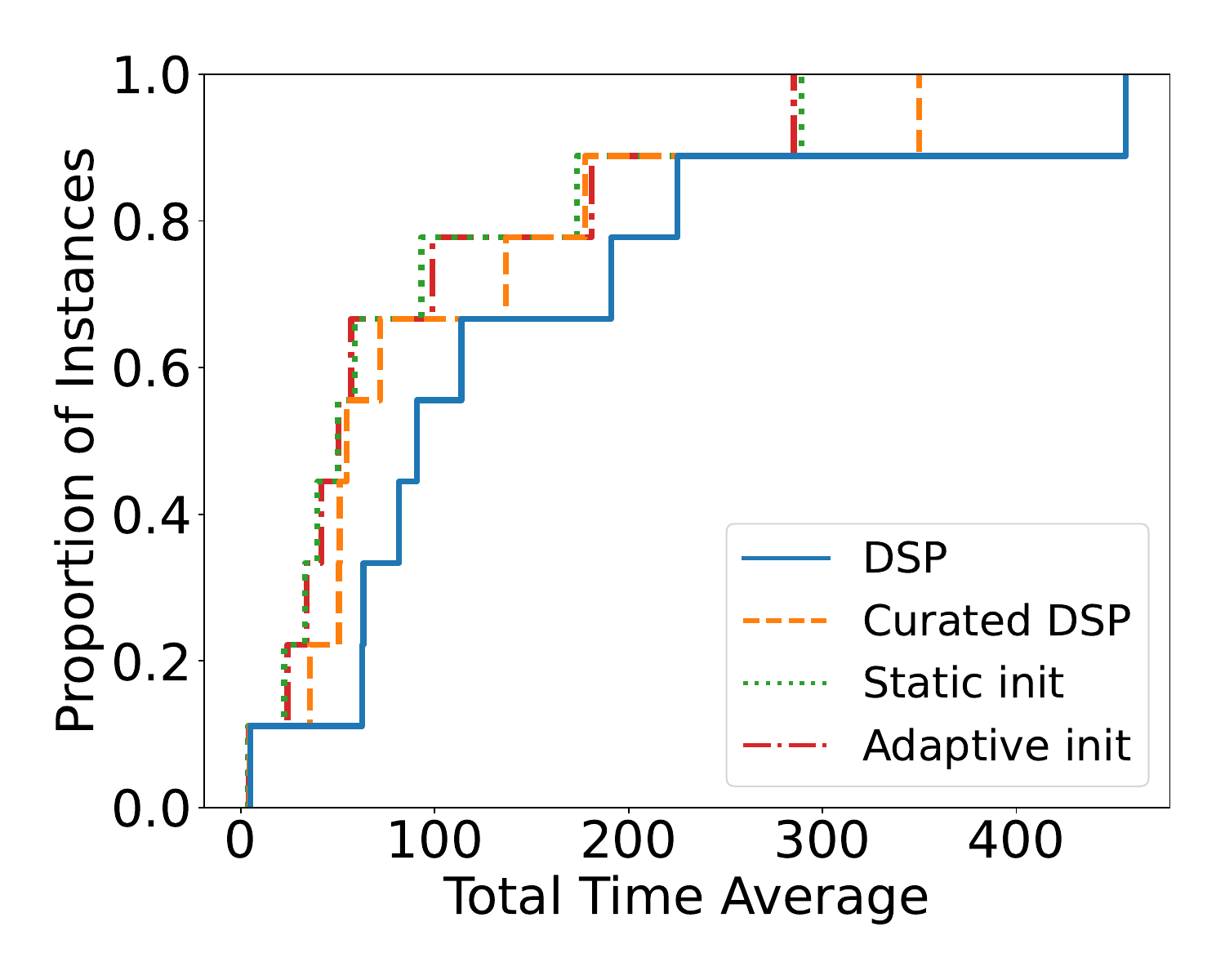}
    \caption{CMND}
  \end{subfigure}

  \vspace{0.5cm}

  \begin{subfigure}{0.48\textwidth}
    \centering
    \includegraphics[width=\linewidth]{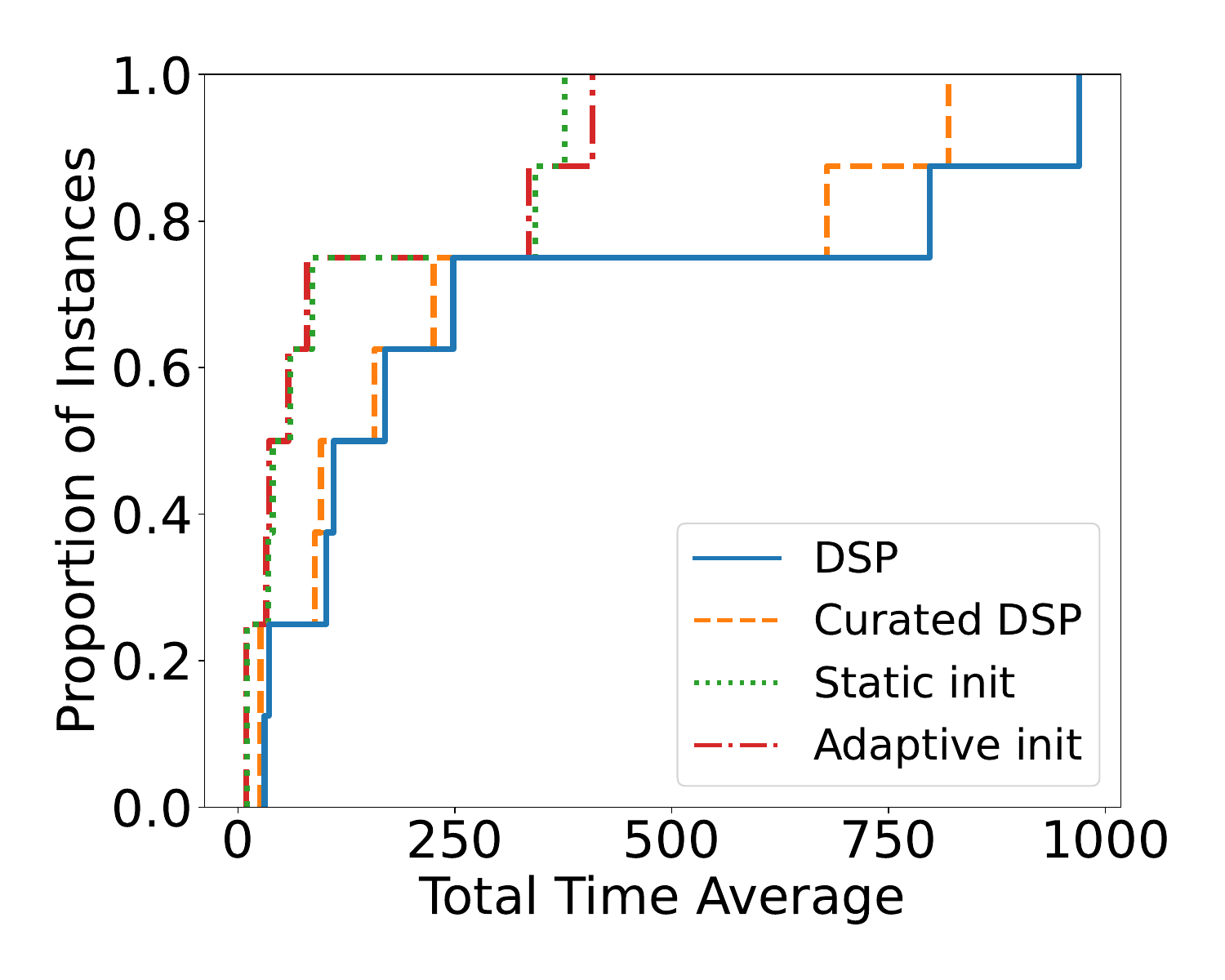}
    \caption{UC}
  \end{subfigure}
  \caption{Plots showing the fraction 
    of solved
LP instances over time for CFLP, CMND, and UC problems.}
  \label{fig:CDF_LP}
\end{figure}
Tables \ref{tab:lp_results_cflp} - \ref{tab:lp_results_uc}
display the summary results of the different methods for solving the
LP test instances for CFLP, CMND, and UC problems, respectively.
The tables
demonstrate the
significant benefits of information reuse techniques across all three problem classes.
The introduction of DSP drastically reduces \someformat{SP count},
suggesting that we are usually able to find violated cuts in the DSP, 
and only occasionally need to solve subproblems to generate cuts.
This validates the presence of valuable dual solutions within the DSP,
and their effectiveness in generating violated cuts. 
The reduction in \someformat{SP count} directly contributes to savings in \someformat{SP T}, 
leading to the observed savings in \someformat{Total T}.
A somewhat surprising result is the decrease in \someformat{iterations} of the 
algorithm from baseline to DSP for CFLP and CMND.

As hoped, we observe curated DSP  reduces \someformat{DSP T} and hence leads to a reduction in \someformat{Cut T} and ultimately \someformat{Total T}.
Intuitively, one might expect \someformat{SP T} to increase from DSP to curated DSP, 
as we have fewer dual solutions in the curated pool. 
However, the curation does not lead to a substantial increase in \someformat{SP T}
suggesting that curated DSP obtains a good trade-off in the time saved from searching the DSP against the small extra time spent solving the subproblems

Both static and adaptive initialization methods 
consistently outperform baseline and DSP methods, 
needing fewer \someformat{iterations} to converge.
This shows the value of initializing the algorithm with Benders cuts. 
This reduction in \someformat{iterations} directly contributes to
these methods having the shortest overall \someformat{Total T}.
Interestingly, adaptive initialization needs the fewest 
\someformat{iterations} to converge.
This suggests that it is able to identify useful cuts.
However, we do not see proportional decrease in \someformat{Total T} because 
adaptive initialization needs more time to find these initial cuts. 
Also, adaptive initialization usually adds more cuts in the main problem, 
leading to longer time to solve the main problem.
Overall,
using the combination of information reuse methods, we are able to solve CFLP and CMND problems approximately 10 times faster
compared to baseline on average, while for UC we observe approximately 5 times speedup.
The more modest improvement for UC is because the subproblems are easier to solve.
As a result, \someformat{Cut T} makes up a smaller share of \someformat{Total T}.
Therefore, DSP and curated DSP, which primarily reduce \someformat{Cut T}, have limited impact on \someformat{Total T}.
Hence, most of the improvement for UC comes from the initialization methods,
rather than from DSP or curated DSP.

We complement the summary results presented in Tables \ref{tab:lp_results_cflp} - \ref{tab:lp_results_uc}
with the total time cumulative distribution function (CDF) plots shown in Figure \ref{fig:CDF_LP}. In these figures, the Y-axis shows the proportion of solved instances
and the X-axis represents time. To focus on the relative improvements beyond just using DSP, these figures display only the four methods that reuse information and exclude the baseline method.
Broadly, we observe that curated DSP improves significantly over DSP and that
both initialization methods improve significantly over curated DSP, while they are comparable to each other.

\subsubsection{IP Results}
\label{sub:IP results}
\begin{table}[h]
\begin{center}
\begin{tabular}{lrrrrrrr}
\hline
Method       & \multicolumn{1}{c}{\someformat{Total T}} & \multicolumn{1}{c}{\someformat{IP T}} & \multicolumn{1}{c}{\someformat{LP T}} & \multicolumn{1}{c}{\someformat{Cut T}} & \multicolumn{1}{c}{\someformat{DSP T}} & \multicolumn{1}{c}{\someformat{SP T}} & \multicolumn{1}{c}{\someformat{Init T}} \\
\hline
Baseline     & 387.5                       & 327.1                    & 44.9                     & 163.9                    & -                         & 163.9                     & 0.6                        \\
DSP          & 244.0                       & 212.6                    & 20.0                     & 61.0                     & 56.9                      & 3.0                       & {0.5}                        \\
Curated DSP  & 216.9                       & 191.4                    & 14.7                     & 46.2                     & 41.4                      & 3.8                       & {0.5}                        \\
Static init  & 194.1                       & 182.4                    & 6.1                      & 43.9                     & 39.2                      & 3.8                       & 0.6                        \\
Adaptive init& {102.2}                       & {77.7}                     & {5.7}                      & {5.7}                      & {4.5}                       & {1.2}                       & 13.7                       \\
\hline
\end{tabular}
\end{center}
\caption{IP results: CFLP - Part 1.}
\label{tab:ip_results_cflp_part1}
\end{table}
\begin{table}[h]
\begin{center}
\begin{tabular}{lrrrrrrr}
\hline
Method       & \multicolumn{1}{c}{\someformat{Total T}} & \multicolumn{1}{c}{\someformat{IP T}} & \multicolumn{1}{c}{\someformat{LP T}} & \multicolumn{1}{c}{\someformat{Cut T}} & \multicolumn{1}{c}{\someformat{DSP T}} & \multicolumn{1}{c}{\someformat{SP T}} & \multicolumn{1}{c}{\someformat{Init T}} \\
\hline
Baseline     & 667.2                       & 537.8                    & 89.2                     & 259.2                     & -                         & 259.2                    & 1.5                       \\
DSP          & 100.9                       & 75.2                     & 18.2                     & 26.1                      & 23.2                      & 2.9                       & {1.0}                       \\
Curated DSP  & 61.8                        & 45.8                     & 9.8                      & 10.5                      & 7.0                       & 3.4                       & {1.0}                       \\
Static init  & 58.5                        & 47.2                     & 6.5                      & 9.4                       & 6.3                       & 2.9                       & 1.1                       \\
Adaptive init & {40.5}                       & {21.2}                     & {6.1}                      & {1.6}                       & {1.2}                       & {0.5}                       & 8.4                       \\
\hline
\end{tabular}
\end{center}
\caption{IP Results: CMND - Part 1.}
\label{tab:ip_results_cmnd_part1}
\end{table}

\begin{table}[h]
\begin{center}
\begin{tabular}{lrrrrrrr}
\hline
Method       & \multicolumn{1}{c}{\someformat{Total T}} & \multicolumn{1}{c}{\someformat{IP T}} & \multicolumn{1}{c}{\someformat{LP T}} & \multicolumn{1}{c}{\someformat{Cut T}} & \multicolumn{1}{c}{\someformat{DSP T}} & \multicolumn{1}{c}{\someformat{SP T}} & \multicolumn{1}{c}{\someformat{Init T}} \\
\hline
Baseline     & 313.6                       & 6.7                      & 300.0                    & 2.6                      & -                         & 2.6                       & 1.6                        \\
DSP          & 220.4                       & 11.9                     & 180.0                    & 3.2                      & 1.9                       & 1.3                       & 1.3                        \\
Curated DSP  & 194.5                       & 11.1                     & 145.2                    & 1.9                      & 0.6                       & 1.2                       & 1.3                        \\
Static init  & 101.6                       & 10.8                     & 65.5                     & 1.8                      & 0.5                       & 1.3                       & 1.9                        \\
Adaptive init& {80.3}                        & {6.1}                      & {58.3}                     & {1.3}                      & {0.2}                       & {1.1}                       & 4.9                        \\
\hline
\end{tabular}
\end{center}
\caption{IP results: UC - Part 1.}
\label{tab:ip_results_uc_part1}
\end{table}
Tables \ref{tab:ip_results_cflp_part1} - \ref{tab:ip_results_uc_part1}
display the summary results of the different methods for solving the
IP test instances for CFLP, CMND, and UC problems, respectively.
For IPs, we solve the LP relaxation first to obtain a good initialization for branch-and-bound.
Table \ref{tab:ip_results_uc_part1} reveals that for UC, 
\someformat{LP T} dominates \someformat{Total T}.
This computational profile differs from CFLP and CMND,
where IP solving consumes the majority of time.

For CFLP and CMND, we find that DSP reduces \someformat{SP T}
which translates into savings in \someformat{Total T}. 
Curated DSP reduces the \someformat{DSP T} because of a smaller pool,
in turn also helping to decrease \someformat{Total T}.
Similar to the LP results,
curated DSP does not lead to a significant increase in \someformat{SP T},
suggesting that curated DSP provides a good trade-off in the time checking the DSP against the time spent solving subproblems.
For UC, \someformat{Cut T} remains similar across most methods because the tight LP relaxation requires few additional cuts during the IP solving phase.

For all the test problems, 
static and adaptive initialization methods consistently outperform baseline and DSP techniques.
While the two initialization methods performed comparably for LPs,
we observe a clear distinction for IPs.
For CFLP, static initialization offers only a marginal improvement over curated DSP,
whereas adaptive initialization demonstrates a substantial two-fold improvement in total time 
and a ten-fold reduction in cut generation time.
This highlights the effectiveness of adaptive initialization in identifying strong initial cuts.
Data for CMND shows a similar trend,
with adaptive initialization again proving superior and static initialization providing 
only a marginal benefit over curated DSP.
For UC, both initialization schemes significantly outperform the DSP methods,
  with adaptive initialization maintaining a slight edge over static initialization.
For all the problem classes, although adaptive initialization has a higher initial computational cost (\someformat{Init T}),
it delivers superior overall performance (\someformat{Total T}),
making it the best choice for IPs.

\begin{table}[h]
\begin{center}
\begin{tabular}{lrrrr}
\hline
Method       & \multicolumn{1}{c}{\someformat{Nodes}} & \multicolumn{1}{c}{\someformat{Root gap (\%)}} & \multicolumn{1}{c}{\someformat{Callback calls}} & \multicolumn{1}{c}{\someformat{SP count}} \\
\hline
Baseline     & 2814.5                    & 2.9                          & 115.7                        & 115.7                        \\
DSP          & 2819.9                    & 3.1                          & 116.6                        & 6.1                          \\
Curated DSP  & 2824.7                    & 3.1                          & 114.8                        & 6.6                          \\
Static init  & 2645.7                    & 3.1                          & 111.0                        & 6.5                          \\
Adaptive init& {2129.8}                    & {2.8}                          & {13.4}                         & {1.9}                          \\
\hline
\end{tabular}
\end{center}
\caption{IP results: CFLP - Part 2.}
\label{tab:ip_results_cflp_part2}
\end{table}

\begin{table}[h]
\begin{center}
\begin{tabular}{lrrrr}
\hline
Method       & \multicolumn{1}{c}{\someformat{Nodes}} & \multicolumn{1}{c}{\someformat{Root gap (\%)}} & \multicolumn{1}{c}{\someformat{Callback calls}} & \multicolumn{1}{c}{\someformat{SP count}} \\
\hline
Baseline     & 2404.9                    & 12.7                         & 140.1                         & 140.1                         \\
DSP          & 1446.1                    & 11.6                         & 71.2                          & 5.0                           \\
Curated DSP  & {1357.7}                    & 11.2                         & 64.5                          & 5.5                           \\
Static init  & 1420.7                    & 9.3                          & 62.5                          & 5.2                           \\
Adaptive init & 1379.0                   & {3.9}                          & {10.8}                          & {1.2}                           \\
\hline
\end{tabular}
\end{center}
\caption{IP Results: CMND - Part 2.}
\label{tab:ip_results_cmnd_part2}
\end{table}

\begin{table}[h]
\begin{center}
\begin{tabular}{lrrrr}
\hline
Method       & \multicolumn{1}{c}{\someformat{Nodes}} & \multicolumn{1}{c}{\someformat{Root gap (\%)}} & \multicolumn{1}{c}{\someformat{Callback calls}} & \multicolumn{1}{c}{\someformat{SP count}} \\
\hline
Baseline     & 2.5                     & 0.08                         & 3.2                          & 3.2                          \\
DSP          & 9.3                     & 0.08                         & 3.3                          & 2.2                          \\
Curated DSP  & 8.3                     & 0.08                         & 3.6                          & 2.1                          \\
Static init  & 8.6                     & 0.08                         & 3.2                          & 2.2                          \\
Adaptive init& {2.0}                     & 0.08                         & {2.3}                          & {2.0}                          \\
\hline
\end{tabular}
\end{center}
\caption{IP Results: UC - Part 2.}
\label{tab:ip_results_uc_part2}
\end{table}

\begin{figure}[h]
  \centering
  \begin{subfigure}{0.48\textwidth}
    \centering
    \includegraphics[width=\linewidth]{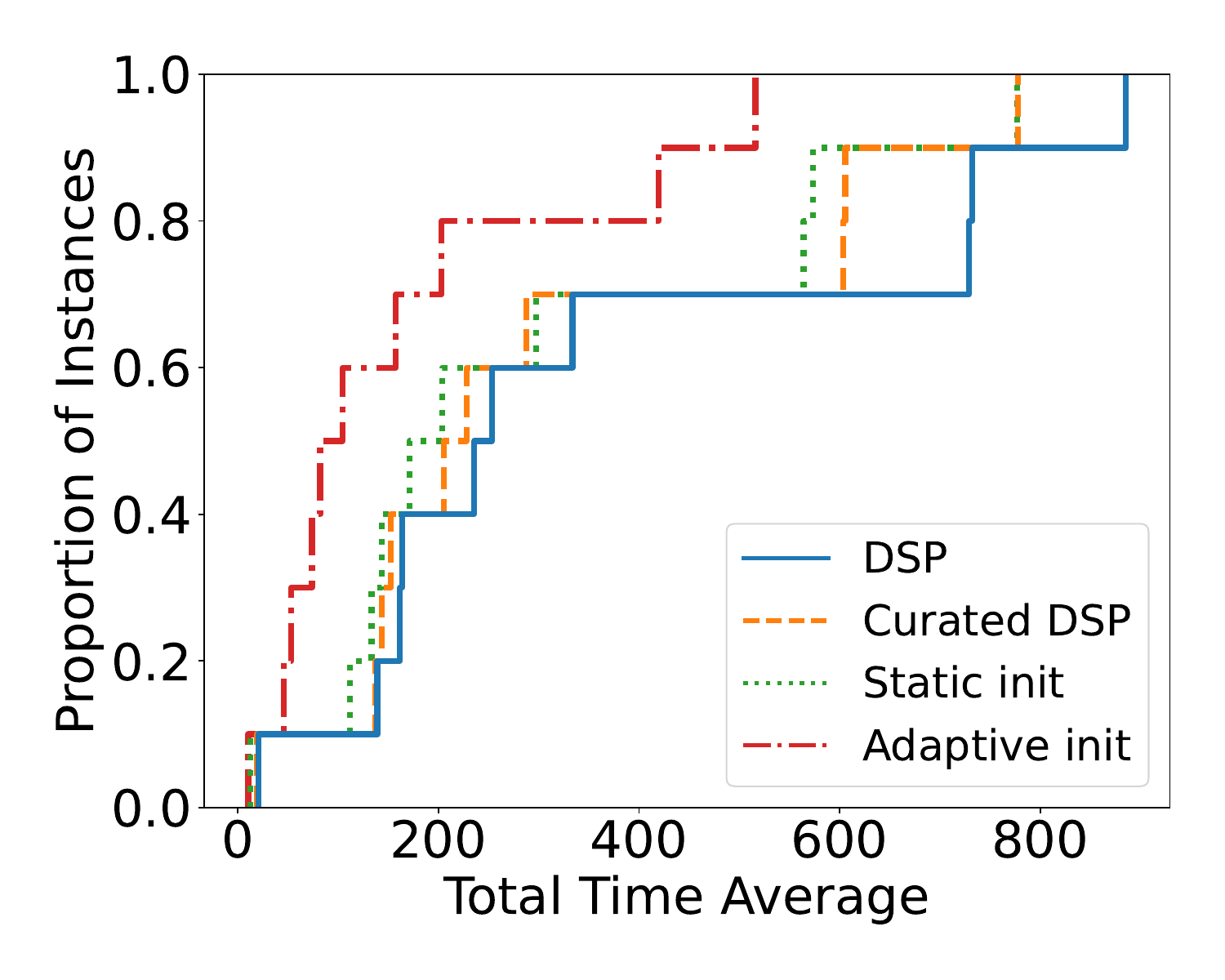}
    \caption{CFLP}
  \end{subfigure}
  \hfill
  \begin{subfigure}{0.48\textwidth}
    \centering
    \includegraphics[width=\linewidth]{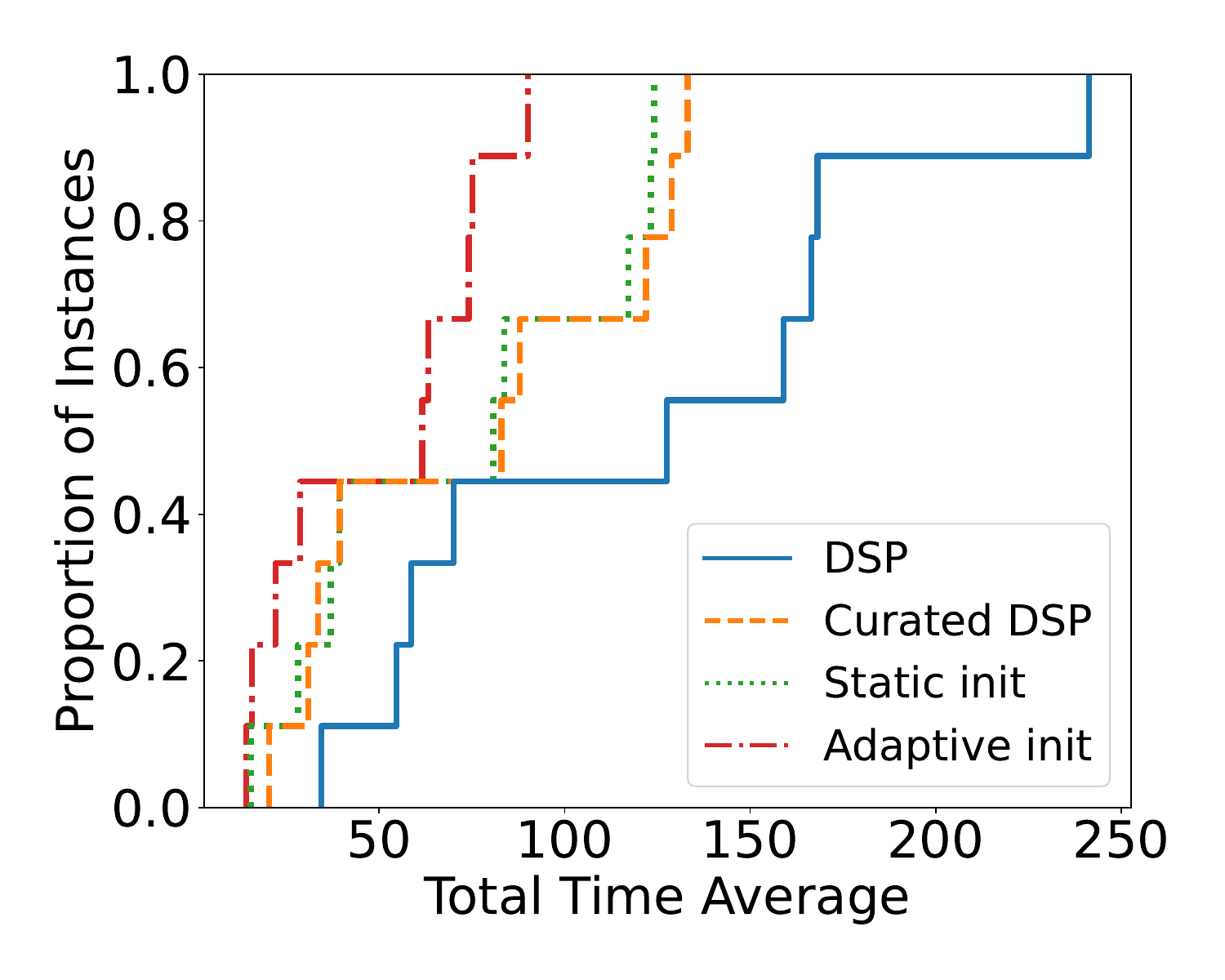}
    \caption{CMND}
  \end{subfigure}

  \vspace{0.5cm}

  \begin{subfigure}{0.48\textwidth}
    \centering
    \includegraphics[width=\linewidth]{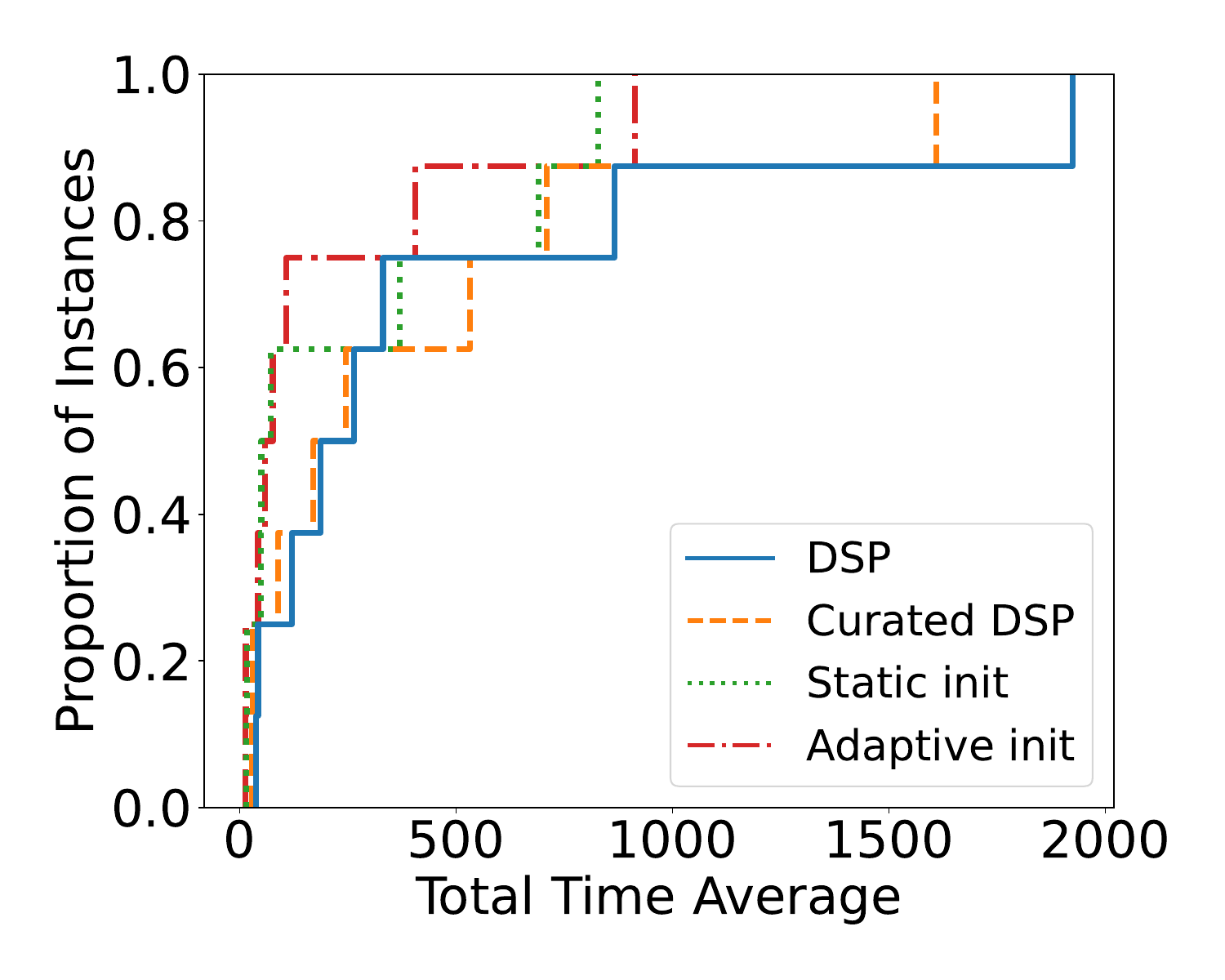}
    \caption{UC}
  \end{subfigure}
  \caption{Plots showing the fraction 
    of solved
LP instances over time for CFLP, CMND, and UC problems.}
\label{fig:CDF_IP}
\end{figure}

Tables \ref{tab:ip_results_cflp_part2} - \ref{tab:ip_results_uc_part2}
present additional branch-and-bound metrics for all three problem classes.
For CFLP and CMND, we observe significant reductions in \someformat{SP count}
after employing DSP,
similar to what we saw for LPs.
Interestingly, deploying adaptive initialization leads to a significant reduction 
in both the \someformat{callback calls} and \someformat{SP count}
compared to other methods.
This suggests that the initial cuts generated by adaptive initialization 
provide a strong relaxation 
of the original problem,
minimizing the need for looking for 
additional cuts during the optimization process.
Furthermore, adaptive initialization improves root gaps,
particularly for CMND.
Since \someformat{root gaps} help isolate the impact of initialization,
this demonstrates that adaptive initialization closes the most gap among all methods.
The resulting reduction in \someformat{callback calls} 
confirms that the initial cuts are highly effective.
For UC, Table \ref{tab:ip_results_uc_part2} shows extremely low root gaps (0.07\%) and very few branch-and-bound node counts,
confirming the near-integrality of the LP relaxation.
This indicates that once the LP is solved,
minimal additional work is required to obtain the IP solution.
This behavior reinforces our observation that UC IP performance is dominated by LP solving.

Figure \ref{fig:CDF_IP} presents the cumulative distribution function (CDF) plots of solution times
for IP instances, again displaying the results only for the methods that reuse information.
These plots further illustrate that the adaptive initialization method has the best performance.
We also see that curated DSP generally leads to an improvement 
in total time compared to regular DSP.


\subsubsection{Solution Quality with Equal Time Budgets}
\label{sub:Solution Quality}


The results presented above demonstrate significant speedups in solution times when using our information reuse methods, particularly adaptive initialization.
As these results focus on time to solve to optimality, they do not give a picture of of how much improvement in solution
quality and gap is achieved earlier in the solution process.
To investigate this, we conduct additional experiments comparing solution quality obtained by the baseline method when
limited to a time comparable to the time it takes the adaptive method to solve to optimality.

For each test instance, we first ran the adaptive initialization method on the 25 replications after the first one and recorded its average solution time $T$.
We then ran the baseline method on the first replication of the same instance with two time limits: $T$ and $2T$,
recording whether it proved optimality and, if not, the final optimality gap at termination.

Tables~\ref{tab:lp_quality_timelimit} and \ref{tab:ip_quality_timelimit} present the average optimality gaps across instances.
The \someformat{Gap - T (\%)} column shows the optimality gap when the baseline is limited to the time limit $T$,
while \someformat{Gap - 2T (\%)} shows the gap when the baseline is given twice that time limit.
All runs hit the solver timeout without proving optimality.

\begin{table}[h]
\centering
\begin{tabular}{lrr}
\hline
Problem & \multicolumn{1}{c}{\someformat{Gap - T (\%)}} & \multicolumn{1}{c}{\someformat{Gap - 2T (\%)}} \\
\hline
CMND & 12474.8 & 4061.8 \\
CFLP & 15.0 & 6.5 \\
UC & 9.4 & 1.0 \\
\hline
\end{tabular}
\caption{LP relaxation: Average optimality gaps with time limits.}
\label{tab:lp_quality_timelimit}
\end{table}

\begin{table}[h]
\centering
\begin{tabular}{lrr}
\hline
Problem & \multicolumn{1}{c}{\someformat{Gap - T (\%)}} & \multicolumn{1}{c}{\someformat{Gap - 2T (\%)}} \\
\hline
CMND & 11.1 & 10.7 \\
CFLP & 3.5 & 2.4 \\
UC & - & - \\
\hline
\end{tabular}
\caption{IP: Average optimality gaps with time limits.}
\label{tab:ip_quality_timelimit}
\end{table}

These results confirm that the baseline's longer solution times are not simply due to closing a small amount of
remaining optimality gap.
Even when run for twice the time it takes the adaptive method to solve the instances to optimality,
the baseline fails to achieve the solution quality that the adaptive initialization obtains.
For IP instances, the gaps are smaller but still significant,
demonstrating that our methods provide fundamental algorithmic improvements across both LP and IP formulations.

Note that for some IP instances, the baseline did not complete the initial LP relaxation within the time limit.
For UC-IP, all instances timeout on the LP relaxation for both time limits $T$
  and $2T$, hence no gap values are reported in
  Table~\ref{tab:ip_quality_timelimit}.
For CMND-IP, four instances did not finish the LP relaxation within time $T$ and 2 instances did not finish within $2T$.
For CFLP-IP, one instance did not finish the LP relaxation within $T$, but all instances completed it within $2T$.
The gap averages reported in Table~\ref{tab:ip_quality_timelimit} are calculated only for instances that completed the LP relaxation, as a gap cannot be computed otherwise. 

\subsubsection{Cut Distribution}
\label{sub:Cut Distribution}

\begin{table}[h]
  \begin{center}
\begin{tabular}{lrrrr}
  \hline
Method & \multicolumn{1}{c}{\someformat{Initial cuts}} & \multicolumn{1}{c}{\someformat{SP cuts}} & \multicolumn{1}{c}{\someformat{DSP cuts}} & \multicolumn{1}{c}{\someformat{Total cuts}} \\
  \hline
Baseline                   & -                                & 39192                       & -                            & 39192                          \\
DSP                        & -                                & 448                        & 33455                        & 33903                          \\
Curated DSP                & -                                & 589                        & 34139                        & 34728                          \\
Static Init                & 784                              & 608                        & 33327                        & 34719                          \\
Adaptive Init               & 13105                             & 128                         & {3610}                         & 16843                         \\
  \hline
\end{tabular}
  \end{center}
\caption{Cut distribution for the CFLP instance with 35 facilities and 105 customers.}
\label{tab:cflp_cut_distribution}
\end{table}

\begin{table}[h]
  \begin{center}
\begin{tabular}{lrrrr}
  \hline
Method & \multicolumn{1}{c}{\someformat{Initial cuts}} & \multicolumn{1}{c}{\someformat{SP cuts}} & \multicolumn{1}{c}{\someformat{DSP cuts}} & \multicolumn{1}{c}{\someformat{Total cuts}} \\
  \hline
Baseline     & -                                & 55721                       & -                            & 55721                          \\
DSP          & -                                & 85                           & 29851                        & 29936                          \\
Curated DSP  & -                                & 128                          & 27604                        & 27732                          \\
Static Init  & 782                              & 50                          & 26455                        & 27287                          \\
Adaptive Init & {8723}                            & 24                          & {3629}                         & 12376                         \\
  \hline
\end{tabular}
  \end{center}
\caption{Cut distribution for the CMND instance r03.3.}
\label{tab:cmnd_cut_distribution}
\end{table}
\begin{table}[h]
  \begin{center}
\begin{tabular}{lrrrr}
  \hline
Method & \multicolumn{1}{c}{\someformat{Initial cuts}} & \multicolumn{1}{c}{\someformat{SP cuts}} & \multicolumn{1}{c}{\someformat{DSP cuts}} & \multicolumn{1}{c}{\someformat{Total cuts}} \\
  \hline
Baseline                   & -                                & 667                         & -                            & 667                            \\
DSP                        & -                                & 116                         & 1223                         & 1339                           \\
Curated DSP                & -                                & 16                          & 1011                         & 1027                           \\
Static Init                & 726                              & 48                          & 161                          & 935                            \\
Adaptive Init              & 1520                             & 0                           & 176                          & 1696                           \\
  \hline
\end{tabular}
  \end{center}
\caption{Cut distribution for the UC instance with 20 generators and difficulty 3.}
\label{tab:uc_cut_distribution}
\end{table}
Our algorithm generates Benders cuts from three sources:
cuts provided during initialization (\someformat{Initial cuts}),
cuts derived by solving subproblems during the algorithm (\someformat{SP cuts}),
and cuts obtained by searching a pool of dual solutions (\someformat{DSP cuts}).
To provide more insight about our methods, we present in Tables \ref{tab:cflp_cut_distribution} - \ref{tab:uc_cut_distribution} the distribution of cuts of each type for sample IP instances of the CFLP, CMND, and UC problems, respectively.
To focus on differences between the methods, the \someformat{Initial cuts} and \someformat{Total cuts} exclude active LP cuts, which are nearly identical for all methods.

We find that the inclusion of DSP in the Benders decomposition framework significantly reduces the
number of subproblems solved to generate cuts, leading to a dramatic decrease in \someformat{SP cuts} across all problem types.
This is expected and confirms the presence of useful dual solutions within the pool,
capable of generating violated cuts.
For CFLP and CMND, we observe that curated DSP requires slightly more \someformat{SP cuts} than DSP,
which can be attributed to the reduced size of the
curated DSP compared to the DSP.
However, for the UC instance, curated DSP reduces \someformat{SP cuts} compared to DSP.
Using either of the initialization methods leads to a further reduction
in the number of subproblem solves required to generate cuts relative to the curated DSP.
For CFLP and CMND, we observe a drastic reduction in the number of total cuts needed
to reach the optimal solution when using adaptive initialization, which is explained by the significant decrease in the number of cuts added from the DSP.
The adaptive initialization method requires very few additional cuts beyond the initial
set introduced during problem initialization,
which translates to significant time savings in solving the SAA replication.
In contrast, for UC, the number of cuts added beyond the LP relaxation is relatively small across all methods, whether through initialization or by solving subproblems.
This suggests that for UC instances, the LP relaxation cuts provide most of the constraints needed to get to the optimal IP solution,
with initialization and DSP methods offering more modest incremental improvements.

\subsubsection{Impact of Adaptive Initialization}
Analysis of the cut distribution in
Tables \ref{tab:cflp_cut_distribution} - \ref{tab:uc_cut_distribution}
reveals that adaptive initialization
consistently introduces a higher number of initial cuts compared to static initialization.
However, for the CFLP and CMND instances, the difference in the number of initial cuts between static and adaptive initialization
is more drastic compared to the UC instance.
Therefore, to isolate the impact of cut quality from quantity on the observed performance improvements,
we focus our analysis on CFLP and CMND and conduct an experiment in which we modify the static initialization method to add the same number
of cuts as adaptive initialization for these two problem types. 
In this version of static initialization, which we refer to as boosted static initialization, we solve the LP relaxation first as usual, and
then use static initialization to add \( \left\lceil \frac{n}{K} \right\rceil \)
cuts per scenario to the main problem, where $n$ is the number of cuts that was added by the adaptive initialization method for the same instance.
For each scenario within the new SAA replication,
we identify the top \( \left\lceil \frac{n}{K} \right\rceil \) dual solutions from the DSP that lead to the highest value of 
the subproblem objective value function, $\qapp{\overline{x}, \pilist}$,
evaluated at the first two optimal solutions in \( \xlistopt \).
The cuts generated from these are then used to initialize the problem.
This ensures both initialization approaches use the same number of cuts.

In Fig. \ref{fig:static_boost}, we track the root node gap closed 
and also the average time taken to solve 
the \(i^{th}\) SAA replication over 26 replications using boosted static initialization and adaptive initialization.
The results demonstrate that adaptive initialization remains superior even when the static initialization adds the same number of cuts.
Thus, we conclude that the adaptive nature of adaptive initialization is important, and in particular
it seems to benefit from allowing the number of cuts added for each scenario to vary.


\begin{figure}[h]
  \centering
  \captionsetup{justification=centering}

  \begin{subfigure}{0.48\textwidth}
    \centering
    \includegraphics[width=\linewidth]{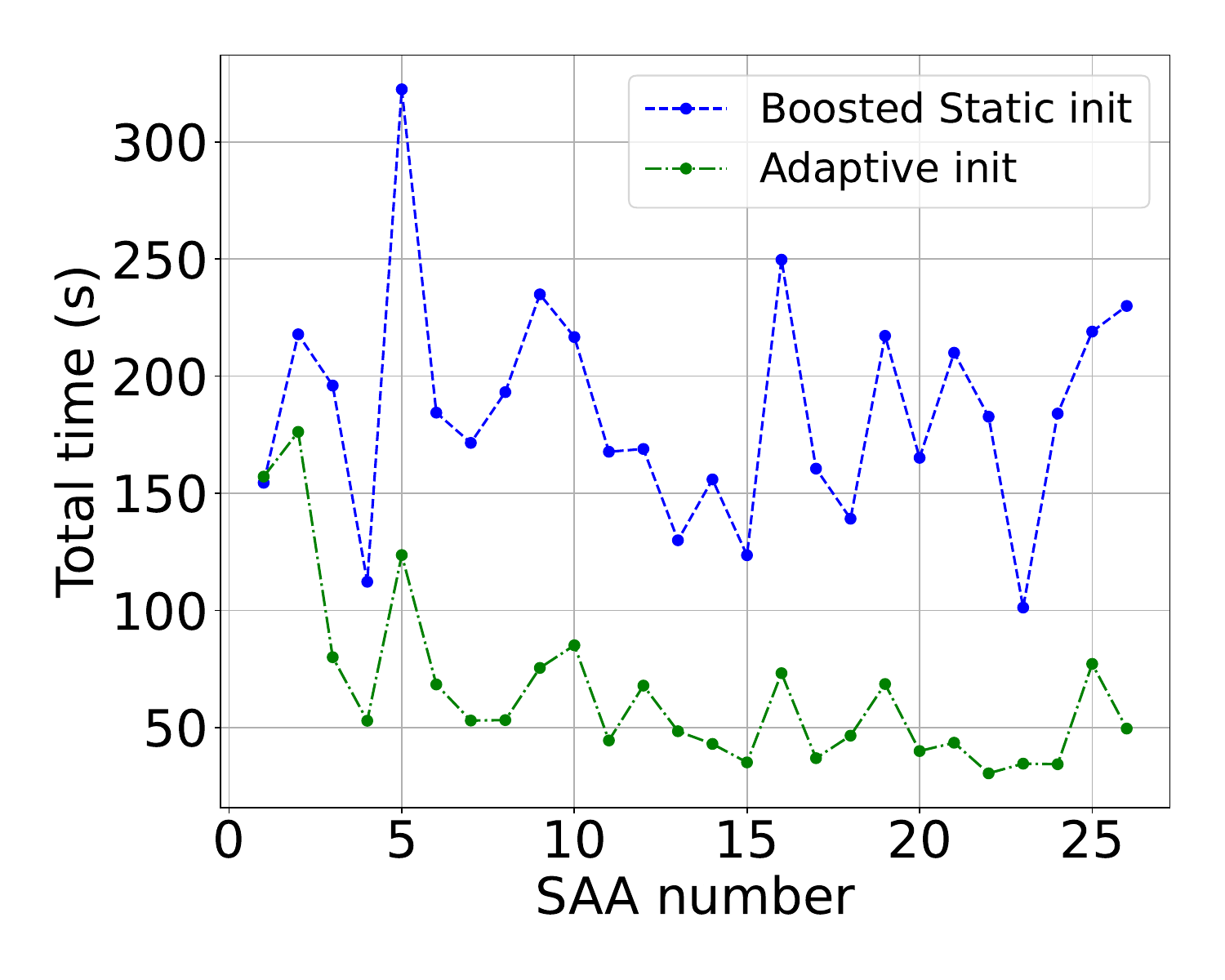}
    \caption{Total time - CFLP.}
  \end{subfigure}
  \hspace{0.02\textwidth} 
  \begin{subfigure}{0.48\textwidth}
    \centering
    \includegraphics[width=\linewidth]{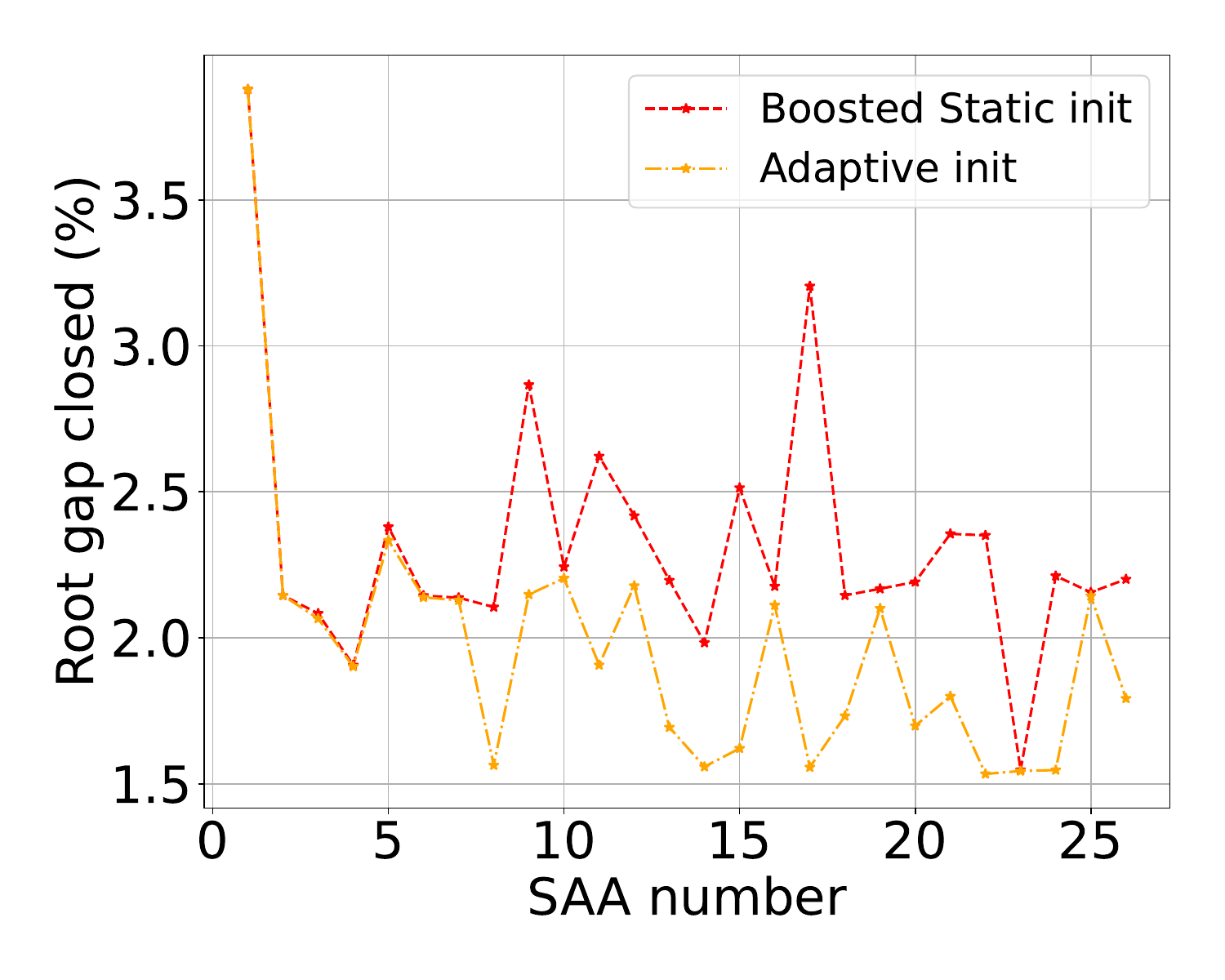}
    \caption{Root gap - CFLP.}
  \end{subfigure}
  \vspace{0.3cm} 

  \begin{subfigure}{0.48\textwidth}
    \centering
    \includegraphics[width=\linewidth]{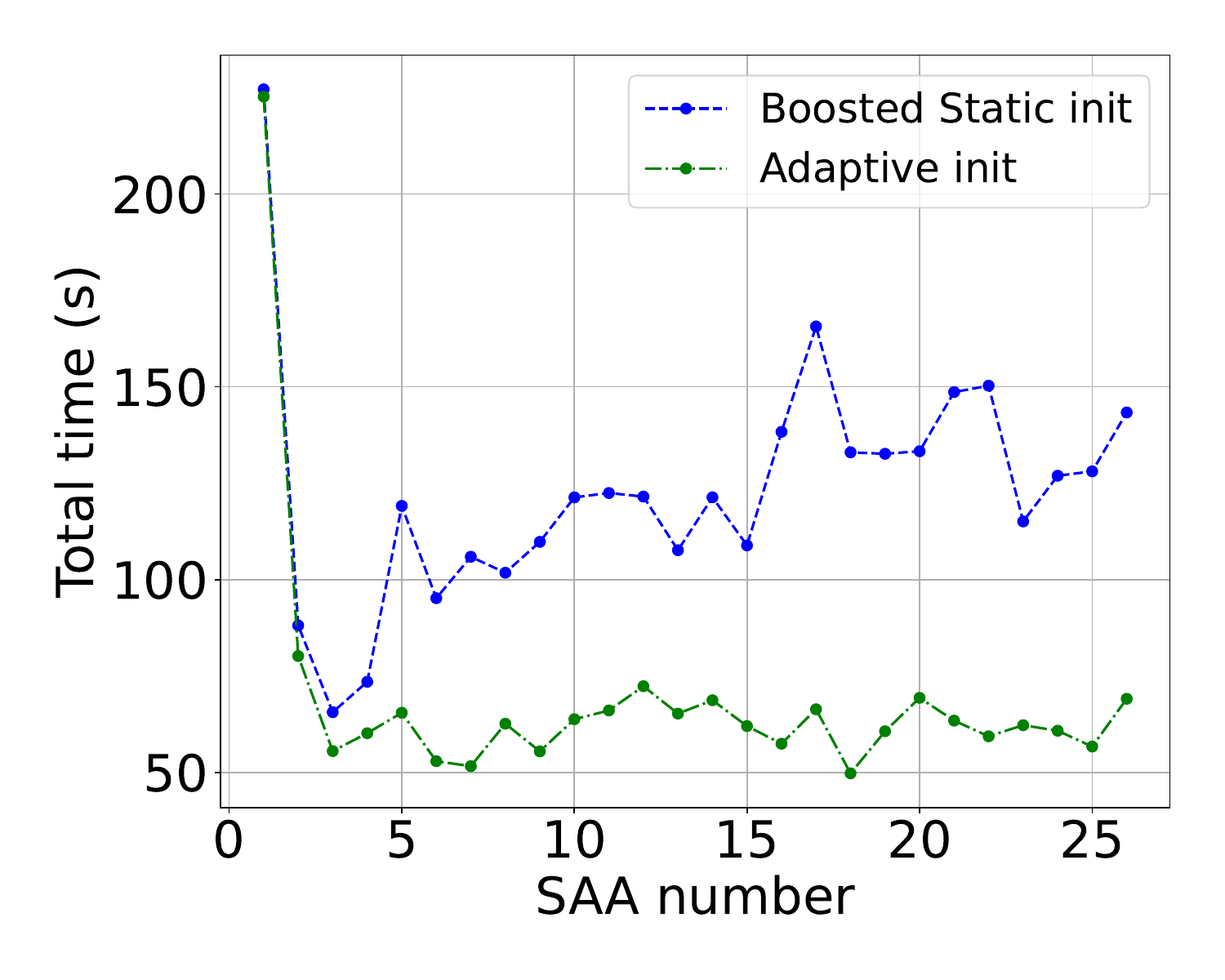}
    \caption{Total time - CMND.}
  \end{subfigure}
  \hspace{0.02\textwidth} 
  \begin{subfigure}{0.48\textwidth}
    \centering
    \includegraphics[width=\linewidth]{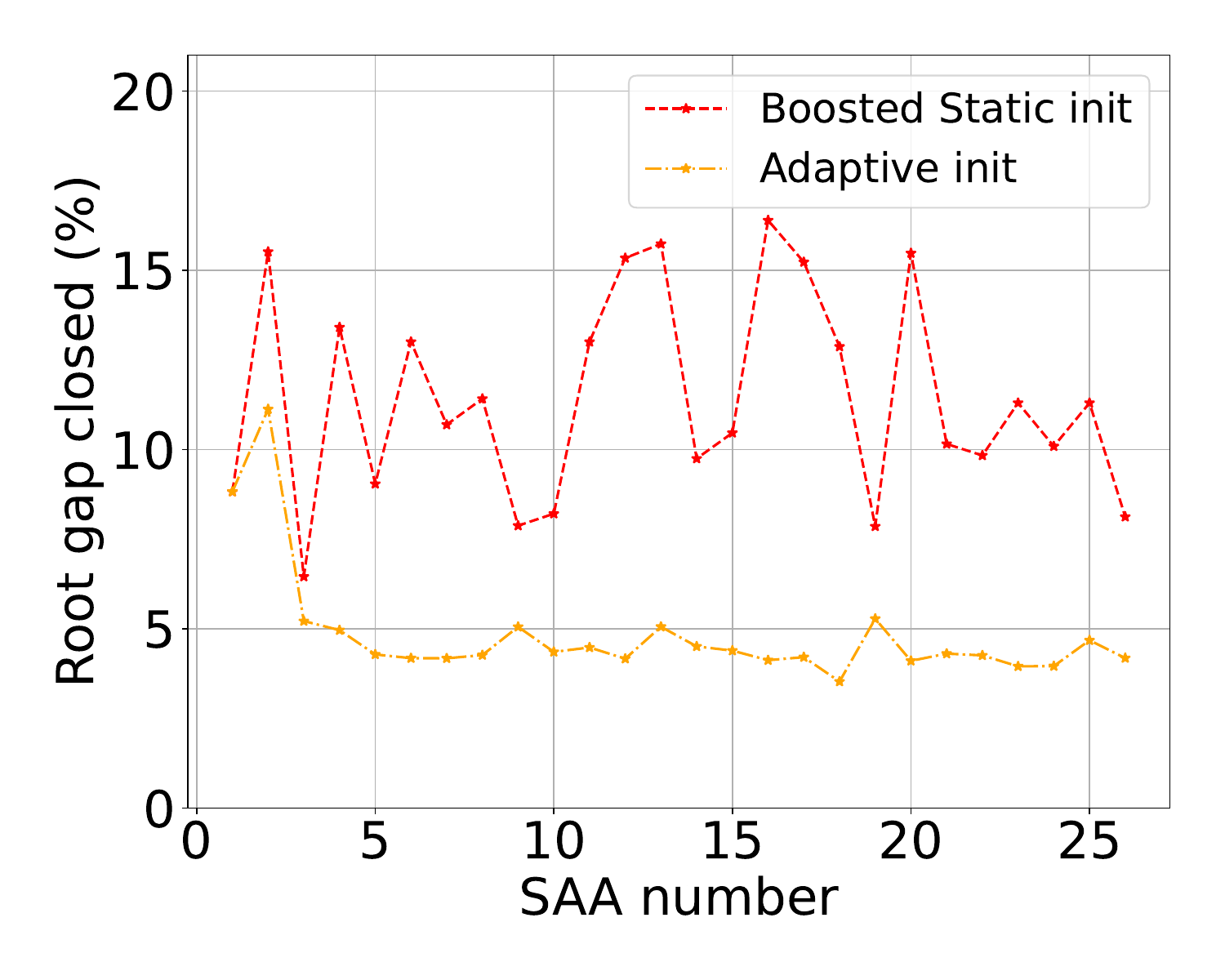}
    \caption{Root gap - CMND.}
  \end{subfigure}
  \caption{Total time and root gap trends for boosted static initialization and adaptive initialization per replication. The CFLP results (top plots) are for the instance with 35 facilities and 105 customers. The CMND results (bottom plots) are for the r03.3 instance.} 
  \label{fig:static_boost}
\end{figure}
%
%

\subsubsection{Impact of Curated DSP}

In Figure \ref{fig:cardinality_of_dsp}, we plot the size of the DSP and curated DSP
as we solve more SAA replications. For CMND and UC, we notice that, the DSP constantly
increases in size, but the curated DSP has a sharp decrease in size in the second replication.
Although it begins to grow again after that, the increase is not as significant compared to the DSP.
This indicates that we have successfully achieved our goal for the curated DSP, as we are able to maintain a controlled size of the pool.
In the case of CFLP, we notice similar trends, although the reduction in size from the first to the second replication is not as pronounced as in CMND.
Additionally, by the end of the 25 replications, the size of the pool is nearly the same as it was after the first replication.
\label{sub:impact of curated DSP}
\begin{figure}[h]
  \centering
    \begin{subfigure}{0.48\textwidth}
    \centering
    \includegraphics[width=\linewidth]{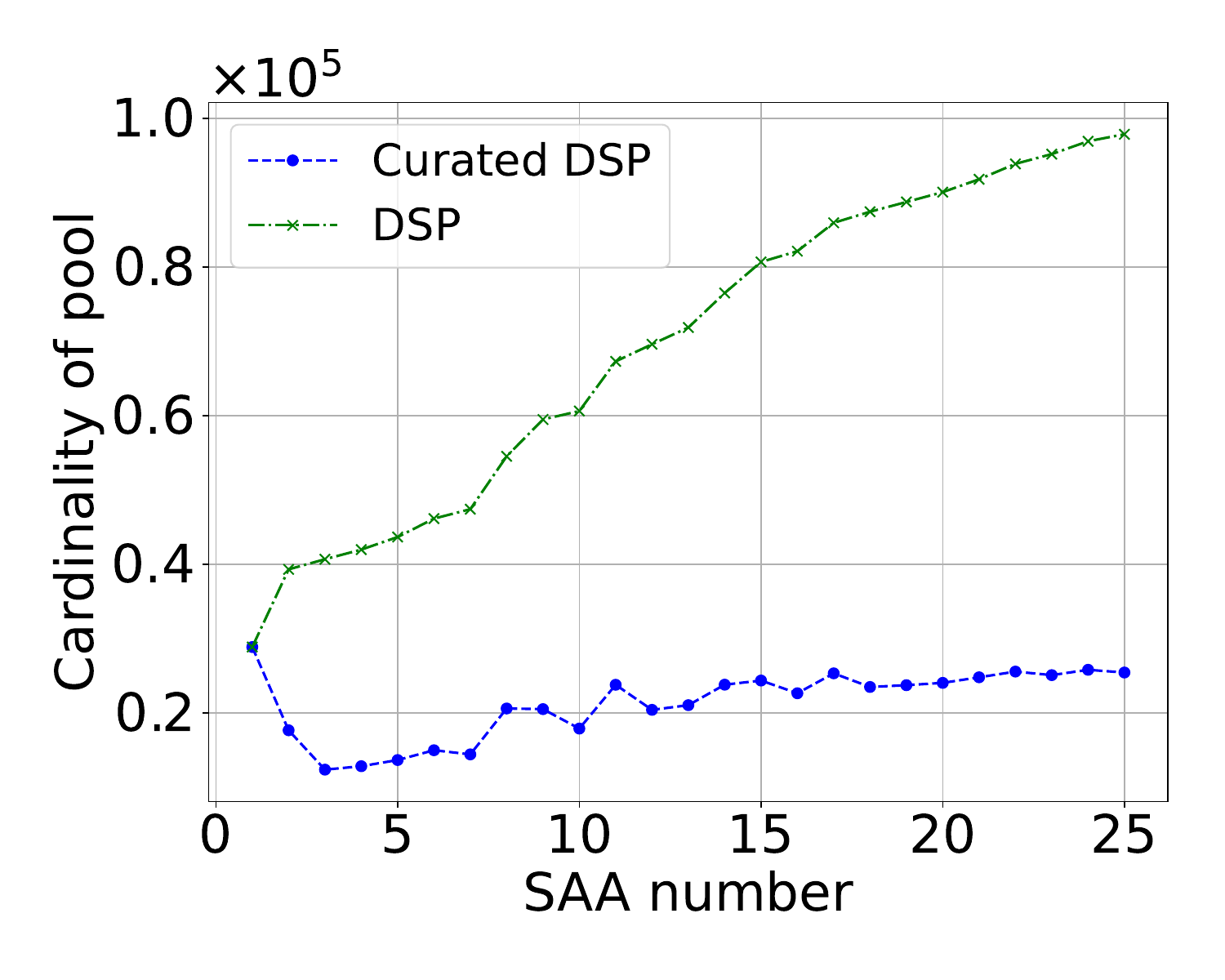}
    \caption{CFLP}
  \end{subfigure}
  \hfill
  \begin{subfigure}{0.48\textwidth}
    \centering
    \includegraphics[width=\linewidth]{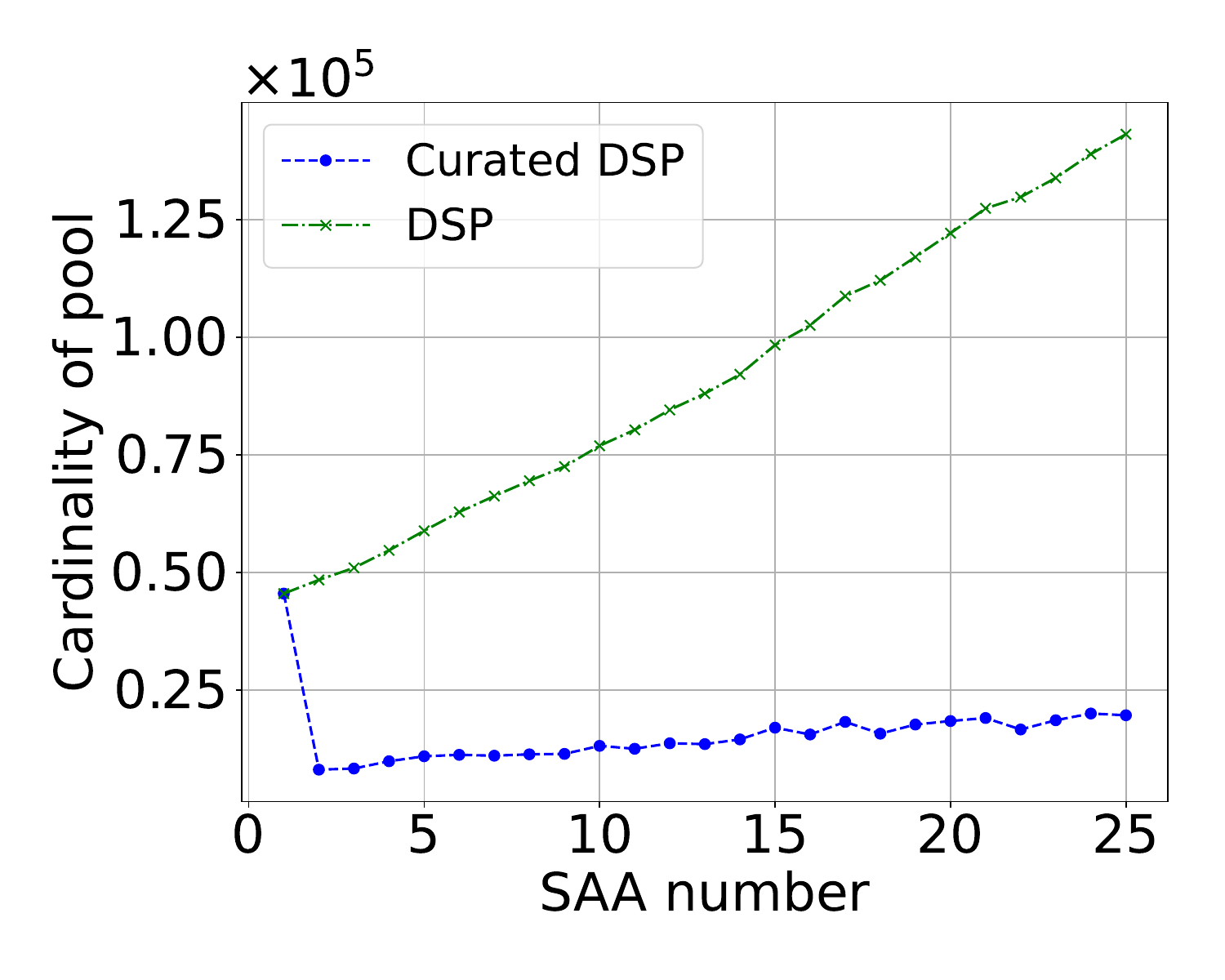}
    \caption{CMND}
  \end{subfigure}

  \vspace{0.5cm}

  \begin{subfigure}{0.48\textwidth}
    \centering
    \includegraphics[width=\linewidth]{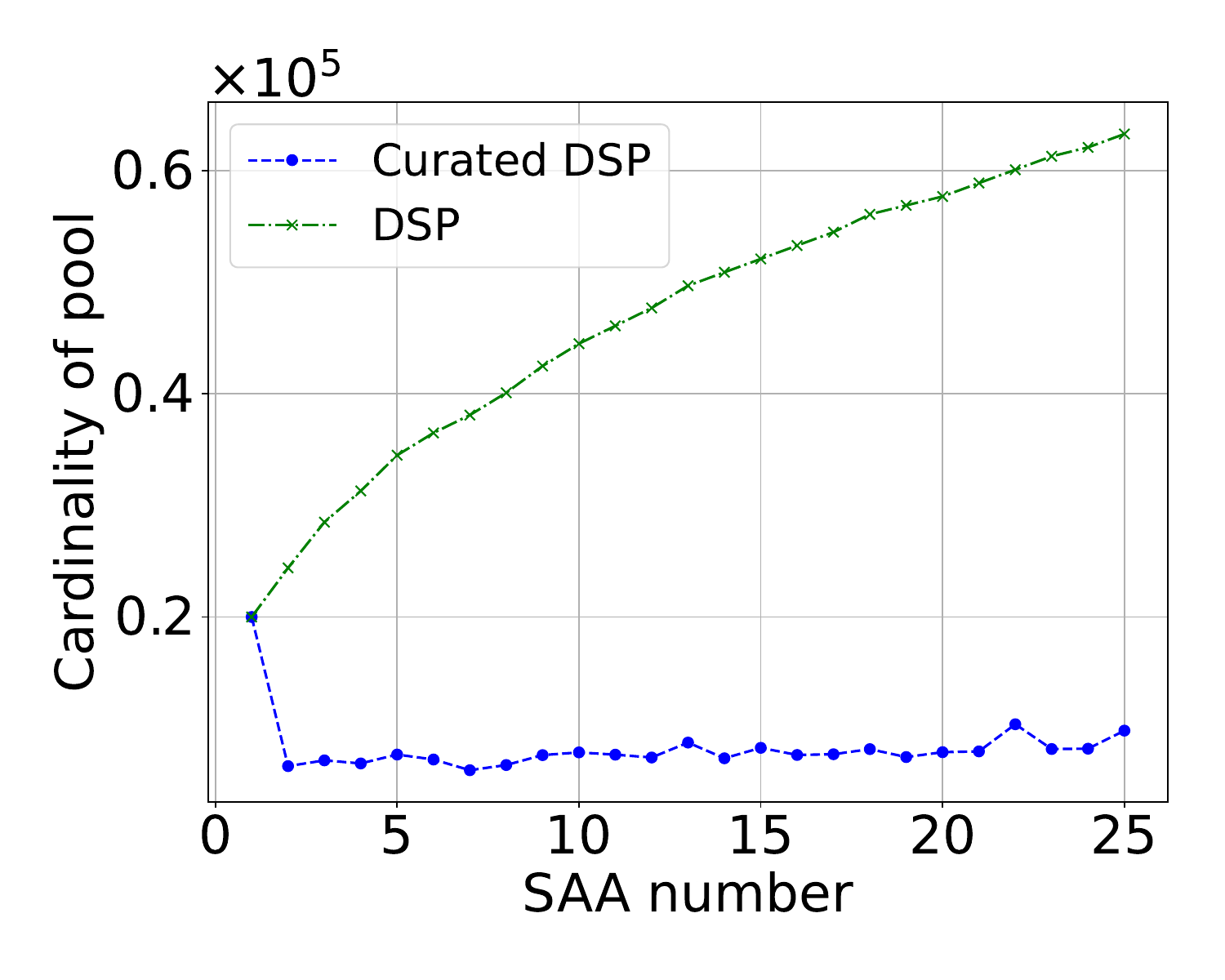}
    \caption{UC}
  \end{subfigure}
  \caption{Number of dual solutions in the pool for DSP and curated DSP.}
  \label{fig:cardinality_of_dsp}
\end{figure}

We next investigate the importance of our particular mechanism for choosing the dual solutions to keep in the curated DSP.
To do this,
we compared our method to a baseline that randomly 
selects the same number of dual solutions as were chosen in our curated DSP method.
We refer to this method as the random curated DSP.
\begin{table}[h]
  \begin{center}
\begin{tabular}{lrrr}
  \hline
Method & \multicolumn{1}{l}{CFLP} & \multicolumn{1}{l}{CMND} & \multicolumn{1}{l}{UC} \\
  \hline
Curated DSP   & 153.17                   & 89.56               & 228.42    \\
Random curated DSP    & 152.28                   & 92.19       & 286.46            \\
  \hline
\end{tabular}
\caption{Total time comparison for curated DSP and random DSP.}
\label{tab:random_dsp_time}
  \end{center}
\end{table}
In Table \ref{tab:random_dsp_time},
we display the average total time taken to solve 25 SAA replications after the first one with these two methods.
We use the same instances to test these methods as the experiments on cut distribution in Sec.
\ref{sub:Cut Distribution}.
For the UC instances, curated DSP achieves a notable improvement over random curated DSP.
This suggests that for UC, the smart selection mechanism of curated DSP successfully identifies more effective dual solutions compared to random selection.
In contrast, for CFLP and CMND instances, the performance is comparable between the two approaches, with no significant difference in solution times.
These results indicate that the primary benefit of curated DSP comes from
maintaining a smaller pool size. However, for some problem classes like UC,
the intelligent selection mechanism provides additional computational
gains by identifying more effective dual solutions.
Importantly, a key feature of our method is that it automatically determines the number of dual solutions
to retain in the curated DSP without requiring this number as input.

\subsubsection{Performance with Varying Number of Scenarios}
\label{sub:scenarios variation}

\begin{figure}[h]
  \centering
  \captionsetup{justification=centering}

  \begin{subfigure}{0.48\textwidth}
    \centering
    \includegraphics[width=\linewidth]{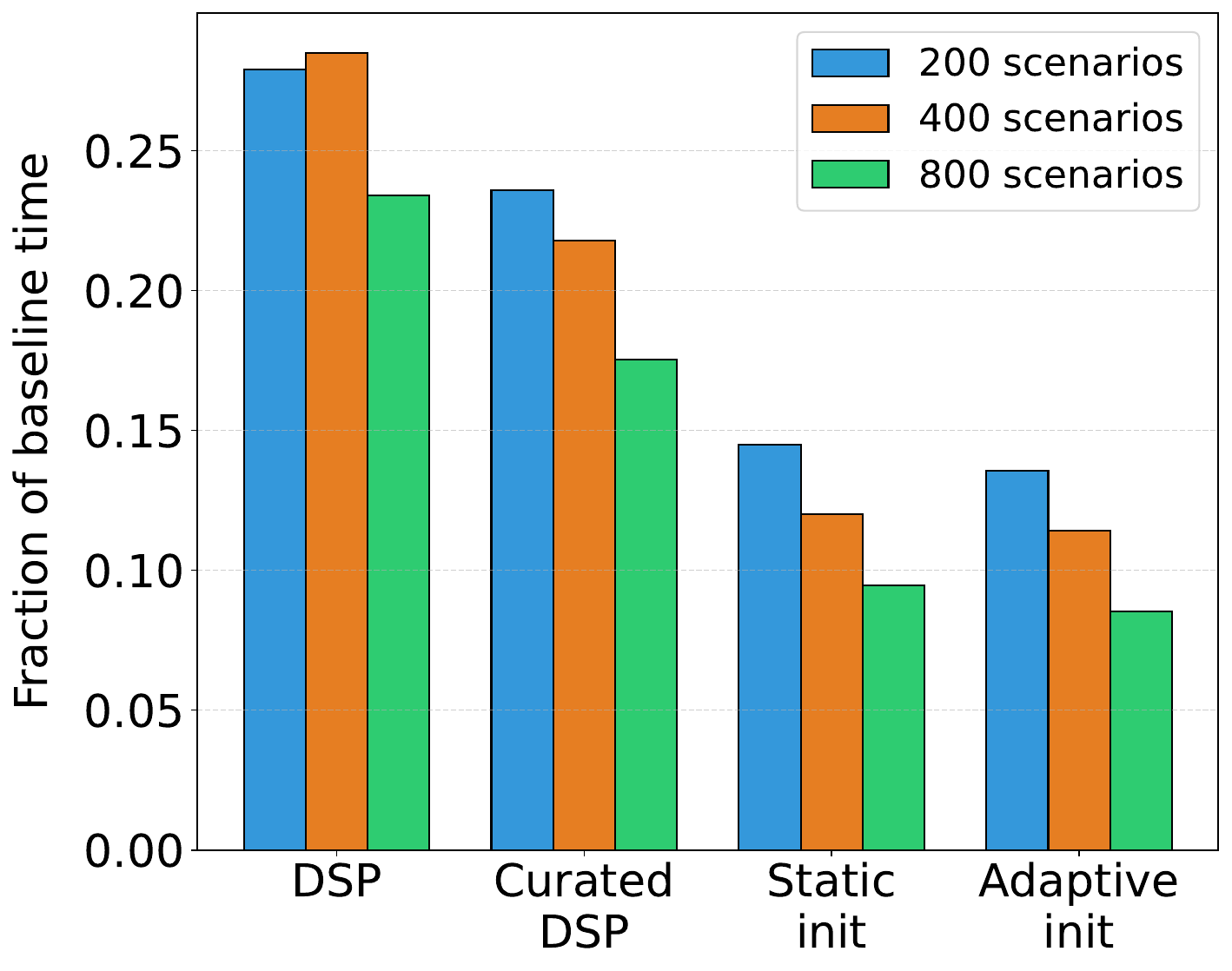}
    \caption{LP - CFLP}
  \end{subfigure}
  \hspace{0.02\textwidth} 
  \begin{subfigure}{0.48\textwidth}
    \centering
    \includegraphics[width=\linewidth]{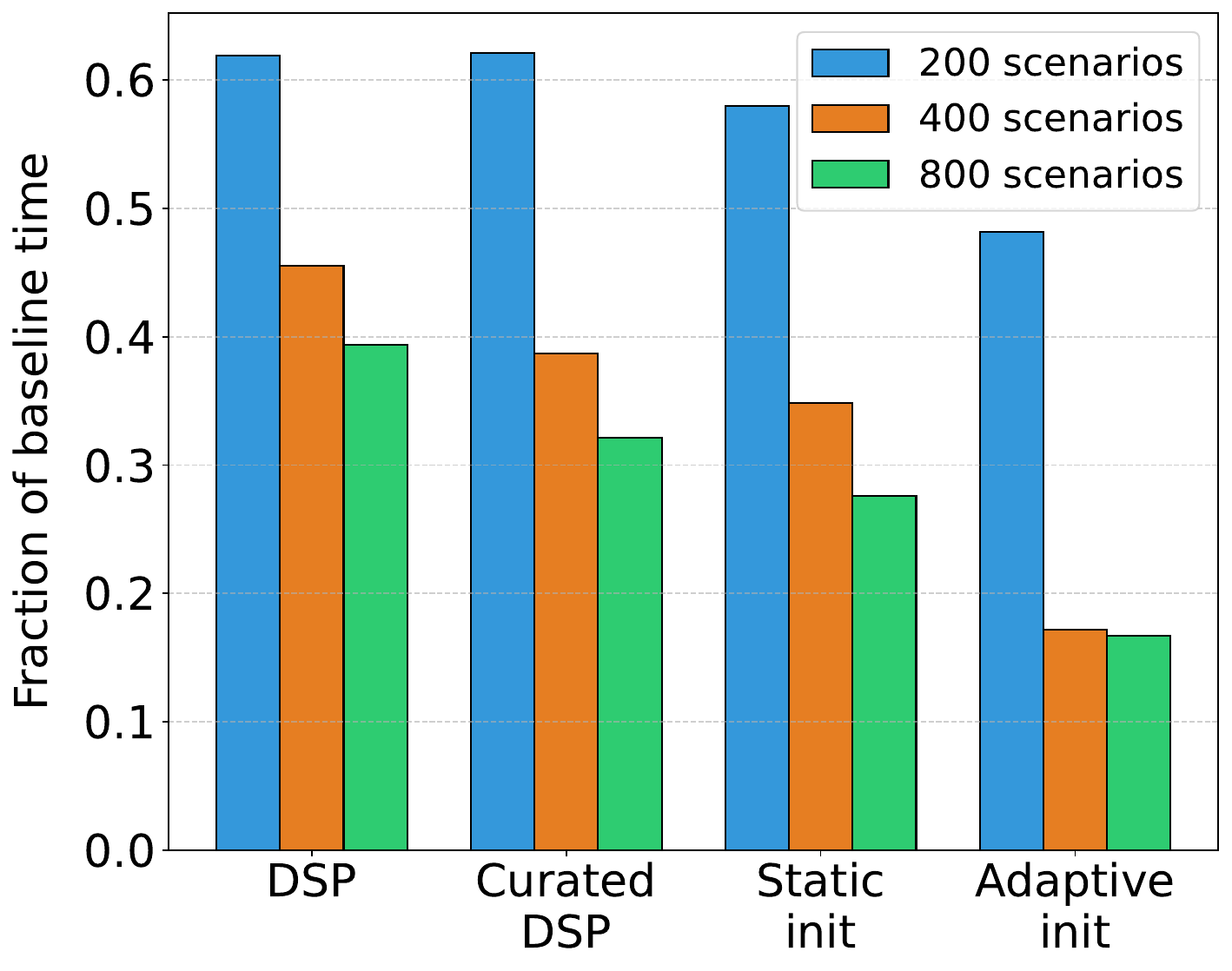}
    \caption{IP - CFLP}
  \end{subfigure}

  \vspace{0.3cm} 

  \begin{subfigure}{0.48\textwidth}
    \centering
    \includegraphics[width=\linewidth]{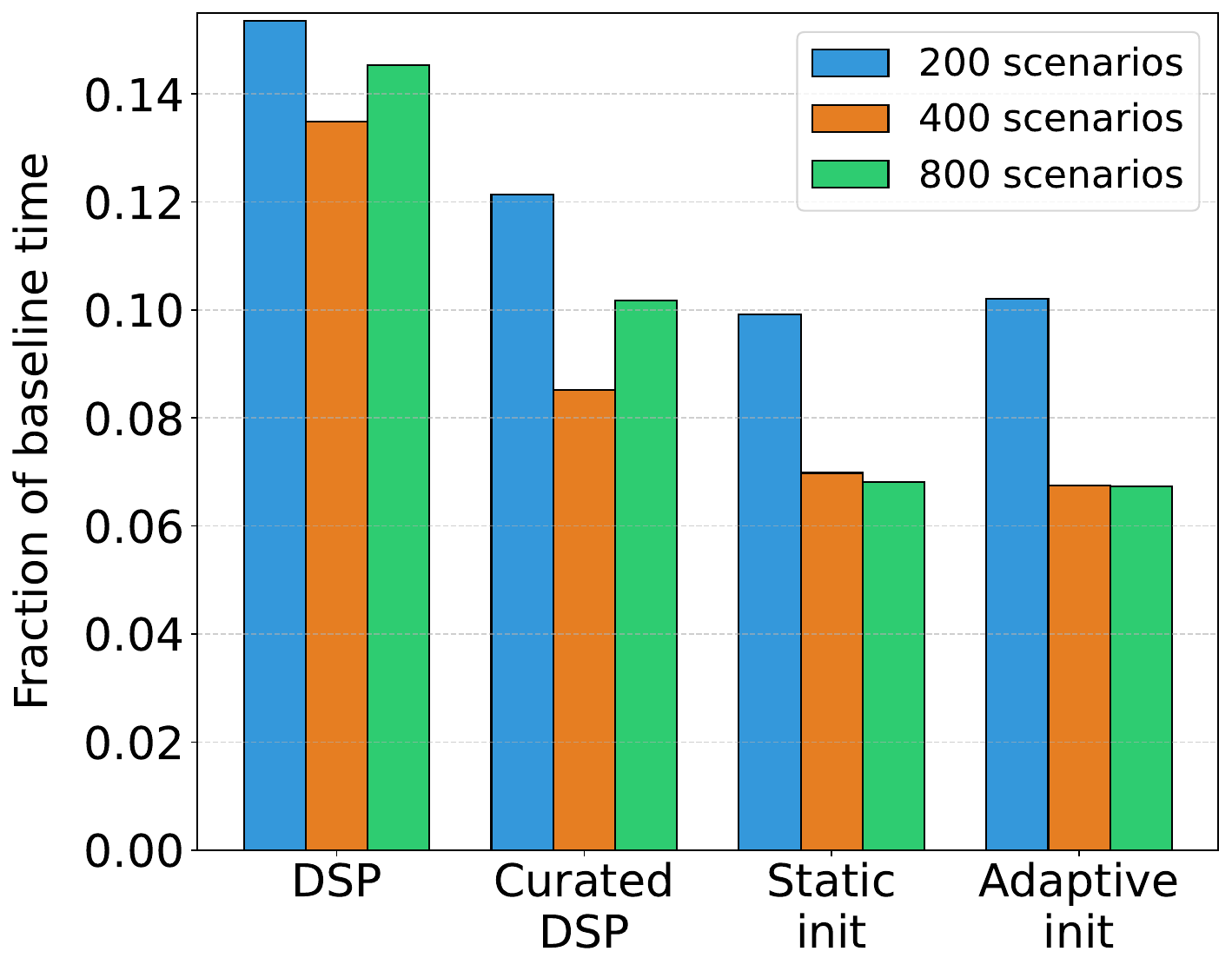}
    \caption{LP - CMND}
  \end{subfigure}
  \hspace{0.02\textwidth} 
  \begin{subfigure}{0.48\textwidth}
    \centering
    \includegraphics[width=\linewidth]{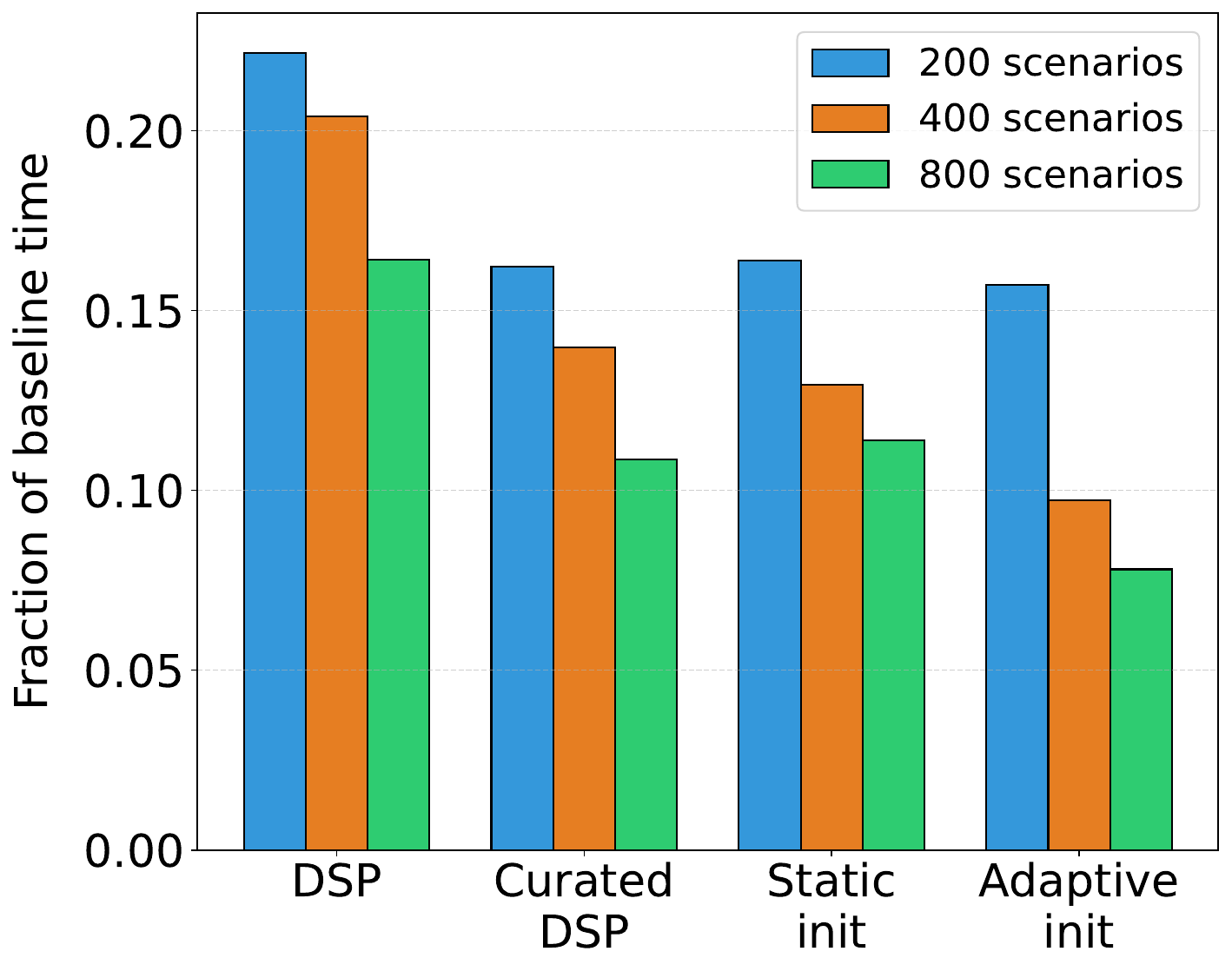}
    \caption{IP - CMND}
  \end{subfigure}

  \vspace{0.3cm} 

  \begin{subfigure}{0.48\textwidth}
    \centering
    \includegraphics[width=\linewidth]{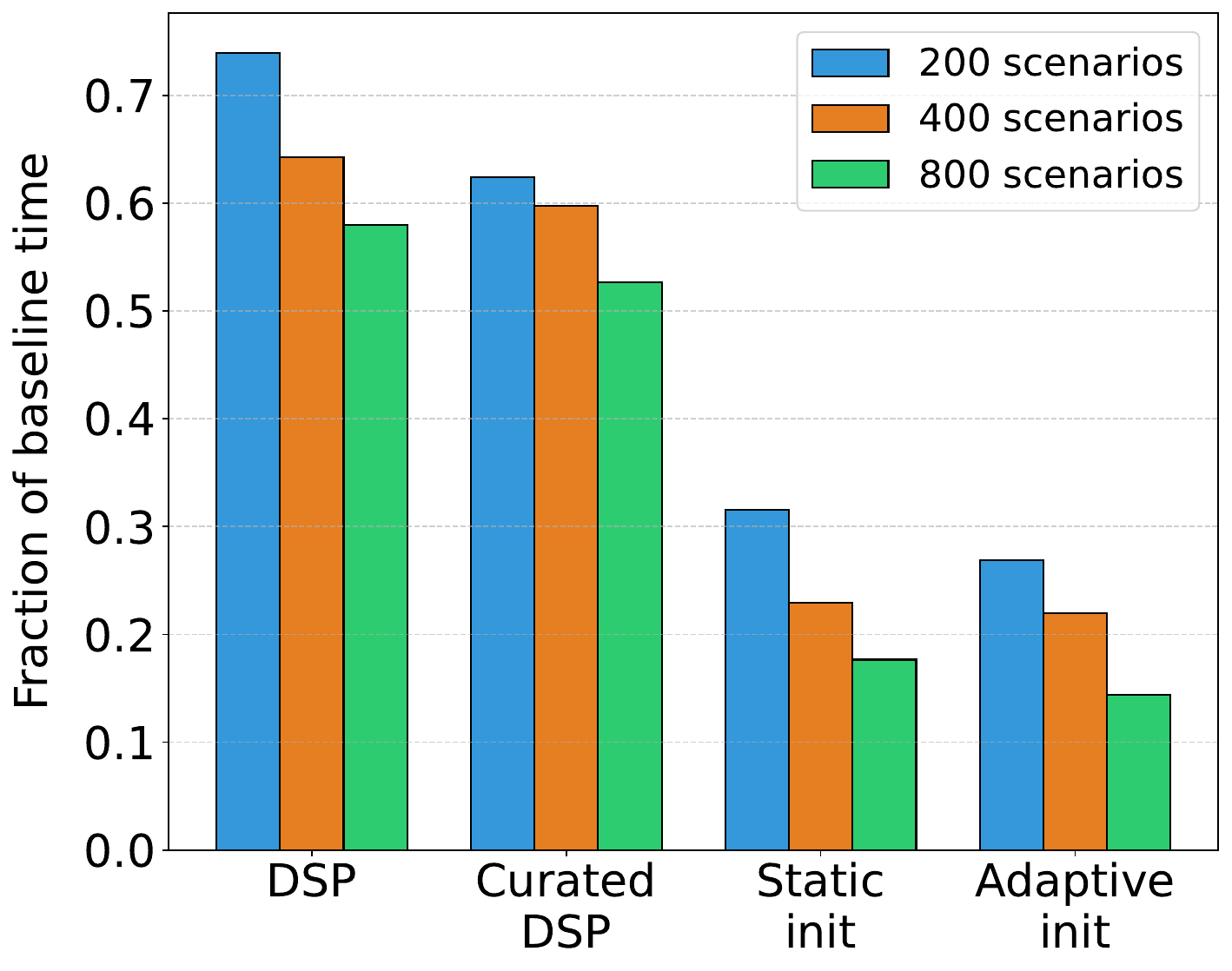}
    \caption{LP - UC}
  \end{subfigure}
  \hspace{0.02\textwidth} 
  \begin{subfigure}{0.48\textwidth}
    \centering
    \includegraphics[width=\linewidth]{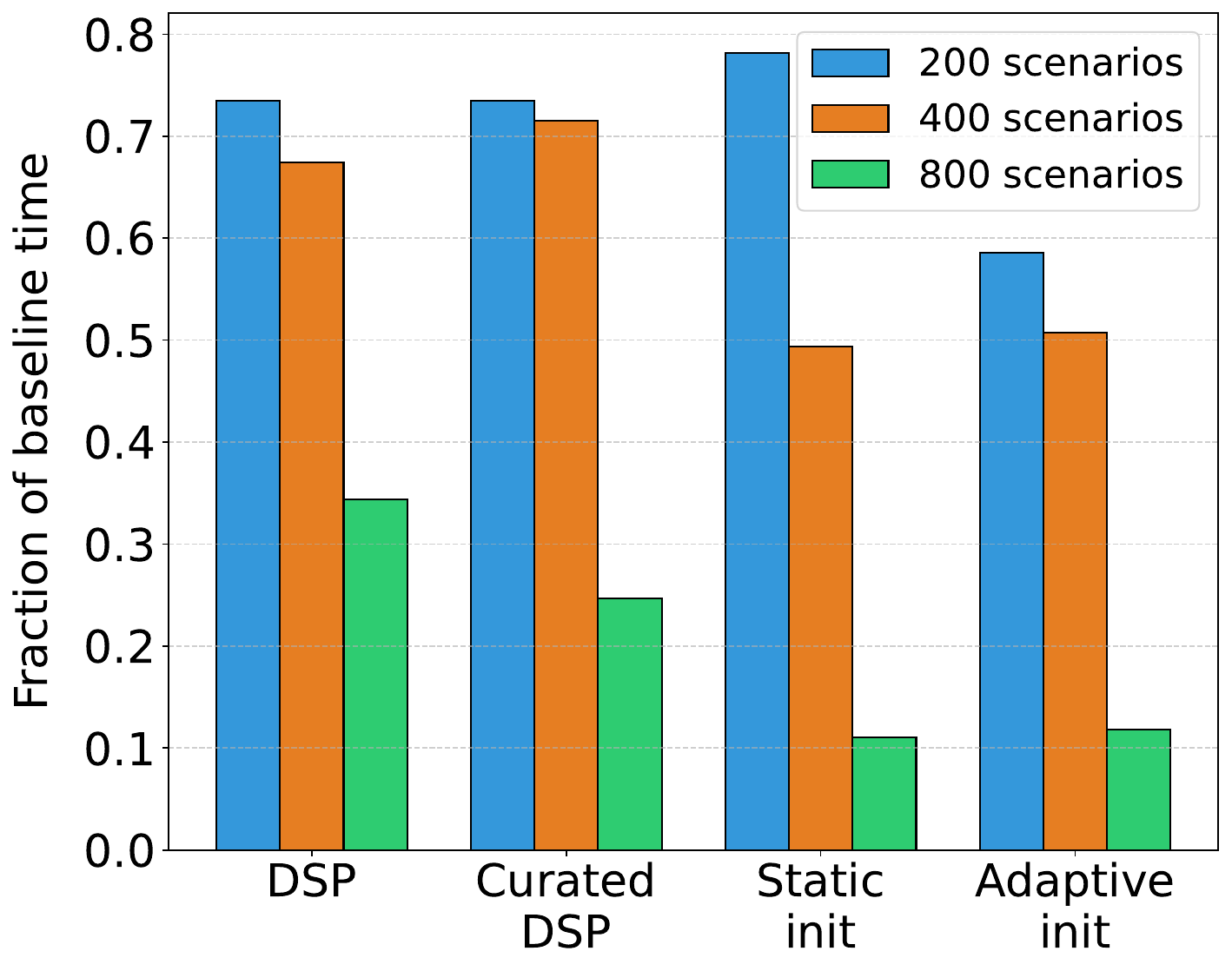}
    \caption{IP - UC}
  \end{subfigure}

  \caption{Comparison of performance of methods as number of scenarios vary.}
  \label{fig:scenario_variation}
\end{figure}

All the previous experiments have been done on instances in which the number of scenarios in each replication is 400. We next investigate the impact of varying the number of scenarios in the replications on our conclusions.

For this experiment, we select one instance for each problem class and then run our methods for that instance with the number of scenarios as 200, 400, and 800.
For network design,
we use instance r09.2 for the LP case and r03.3 for the IP case.
For facility location, we select the instance with 55 facilities
and 495 customers for LP, while the IP instance has 35 facilities and 105 customers.
For UC, we use an instance with 30 generators and difficulty 3.

Figure \ref{fig:scenario_variation}
displays the relative performance of our information reuse methods 
as a fraction of the execution time of the baseline method,
across three scenario sets.
We find that in general the relative performance of the different methods is similar across the different scenario sizes. We also find that the improvements from reusing information tend to be more significant for instances with more scenarios, which is not surprising considering that in such instances there is more work that has to be done in solving subproblems, and hence more opportunity to save time on that work with the proposed DSP and initialization techniques.

\subsection{Single-Cut Benders Decomposition}
\label{subsec:single_cut_experiments}

As described in Section~\ref{ssub:single-cut}, our proposed information reuse strategies can be adapted to the single-cut variant of Benders decomposition. To evaluate whether these adaptations remain effective in the single-cut setting, we conducted experiments on CFLP instances for both LP and IP variants using the same experimental setup as described previously, with the key difference being that we use single-cut Benders decomposition to solve each SAA replication instead of multi-cut.
In all experiments, we impose a time limit of 3600 seconds per replication. We do not report results for CMND or UC
instances with single-cut Benders decomposition, as our experiments found the single-cut method to be impractical for
these test instances.
For our computational experiments on the CFLP, we selected a subset of instances from Tables  
\ref{tab:cflp_ip_instances} and \ref{tab:cflp_lp_instances}. Specifically, for the LP variant, we tested instances with (warehouses, facilities) dimensions of (25, 305), (25, 495), and (55, 495), and for the IP variant, we tested instances with dimensions (15, 105), (15, 215), and (25, 95).


\begin{table}[]
\begin{center}
\begin{tabular}{lrrrrrrr}
\hline
Method       & \multicolumn{1}{c}{\someformat{Total T}} & \multicolumn{1}{c}{\someformat{Iterations}} & \multicolumn{1}{c}{\someformat{SP count}} & \multicolumn{1}{c}{\someformat{Cut T}} & \multicolumn{1}{c}{\someformat{DSP T}} & \multicolumn{1}{c}{\someformat{SP T}} & \multicolumn{1}{c}{\someformat{Init T}} \\
\hline
Baseline     & 621.7   & 374.1      & 374.1    & 621.1 & -     & 621.1 & -      \\
DSP          & 406.4   & 207.5      & 10.6     & 401.0 & 380.4 & 20.3  & -      \\
Curated DSP   & 138.1   & 172.9      & 12.8     & 132.8 & 106.9 & 24.4  & -      \\
Static Init   & 130.5   & 151.2      & 13.8     & 123.8 & 95.4  & 27.2  & 1.0    \\
Adaptive Init & 68.7    & 33.9       & 9.9      & 35.4  & 16.0  & 19.3  & 27.8  \\
\hline
\end{tabular}
\end{center}
\caption{Single-cut LP results: CFLP.}
\label{tab:single_lp_results_cflp}
\end{table}

Table~\ref{tab:single_lp_results_cflp} presents results for CFLP-LP instances using single-cut Benders decomposition.
The high iteration count in single-cut results in the discovery of a large number of dual solutions
throughout the solution process, causing the DSP to grow considerably. Consequently, the DSP
provides only marginal improvement over the baseline, as most of the time is spent searching through the
 large DSP.
In contrast, the curated DSP approach maintains a smaller set of high-quality dual solutions,
substantially reducing DSP search time.
This demonstrates the importance of curation when a large number of dual solutions are collected.
The initialization methods provide further improvements. Notably, adaptive initialization achieves a dramatic reduction in Benders iterations compared to the baseline, solving far fewer subproblems on average.

\begin{table}[]
\begin{center}
\begin{tabular}{lrrrrrrr}
\hline
Method       & \multicolumn{1}{c}{\someformat{Total T}} & \multicolumn{1}{c}{\someformat{IP T}} & \multicolumn{1}{c}{\someformat{LP T}} & \multicolumn{1}{c}{\someformat{Cut T}} & \multicolumn{1}{c}{\someformat{DSP T}} & \multicolumn{1}{c}{\someformat{SP T}} & \multicolumn{1}{c}{\someformat{Init T}} \\
\hline
Baseline     & 263.9   & 176.3 & 86.8 & 176.1 & -   & 176.1 & 0.4    \\
DSP          & 230.2   & 167.1 & 60.5 & 166.9 & 164.3 & 2.2   & 0.6    \\
Curated DSP   & 111.6   & 81.0  & 28.7 & 80.8  & 78.1  & 2.4   & 0.6    \\
Static Init   & 99.2    & 74.4  & 22.0 & 74.1  & 71.4  & 2.4   & 1.0    \\
Adaptive Init & 76.5    & 9.5   & 5.4  & 9.1   & 7.2   & 1.6   & 51.6  \\
\hline
\end{tabular}
\end{center}
\caption{Single-cut IP results: CFLP - Part 1.}
\label{tab:single_ip_results_cflp}
\end{table}
\begin{table}[h]
\begin{center}
\begin{tabular}{lrrrr}
\hline
Method       & \multicolumn{1}{c}{\someformat{Nodes}} & \multicolumn{1}{c}{\someformat{Root gap (\%)}} & \multicolumn{1}{c}{\someformat{Callback calls}} & \multicolumn{1}{c}{\someformat{SP count}} \\
\hline
Baseline      & 4233.2 & 22.9          & 276.1            & 276.1    \\
DSP          & 4020.5 & 16.8          & 272.6          & 3.2      \\
Curated DSP   & 3842.9 & 14.3          & 265.0          & 3.5      \\
Static Init   & 3518.5 & 9.1           & 252.9          & 3.5\\
Adaptive Init & 1642.7 & 7.7           & 25.1           & 2.3      \\
\hline
\end{tabular}
\end{center}
\caption{Single-cut IP results: CFLP - Part 2.}
\label{tab:single_ip_results_cflp_part2}
\end{table}

Table~\ref{tab:single_ip_results_cflp} and Table~\ref{tab:single_ip_results_cflp_part2} present complementary results for CFLP-IP instances using single-cut Benders decomposition.
The pattern observed for LP instances is replicated here: the uncurated DSP provides modest improvement, while curated DSP achieves more substantial gains. Static initialization further improves performance, and adaptive initialization again delivers the best results. While adaptive initialization requires a significant initialization investment, it dramatically reduces both the LP relaxation time and the IP phase time.

Table~\ref{tab:single_ip_results_cflp_part2} provides additional perspective through branch-and-bound metrics. The information reuse methods progressively improve the root gap, with adaptive initialization achieving the lowest root gap by adding high-quality cuts that effectively strengthen the problem formulation before branch-and-bound begins. This improvement in root gap quality translates directly to reduced node counts and callback invocations, with adaptive initialization exploring substantially fewer nodes and requiring far fewer callback invocations with minimal subproblem solves per callback.

These results demonstrate that our information reuse strategies remain highly effective for single-cut Benders
decomposition across both LP and IP problem variants when applied to this problem class.

\section{Conclusion and Future Directions}
We presented methods to accelerate solving a sequence of SAA replications in two-stage stochastic programming,
assuming randomness only in the right-hand sides of the subproblems.
These methods are derived for Benders decomposition as the solution algorithm and we find that, for our test problems, it is possible to reduce the time to solve the replications after the first one by a factor of 10 for both stochastic LP and IP problems by using the information reuse techniques we have proposed.

One significant direction for future research is to consider problems in which the subproblems have uncertainty in either the objective coefficients or the recourse matrix. Such problems do not have the property that the dual feasible region is fixed across scenarios, and hence the techniques we proposed do not extend directly to such problems. 

\sloppypar{
This paper focused on the Benders decomposition algorithm, as it is a leading algorithm for both two-stage stochastic LPs and for IPs with continuous recourse. Future research could investigate techniques for accelerating different algorithms in this context of solving a sequence of SAA replications. For example, for two-stage stochastic LPs, the level method \cite{lemarechal1995new,fabian2007solving} often performs better than Benders decomposition.
We anticipate that the techniques presented here would be useful for accelerating this and other cut-based decomposition methods, but testing this hypothesis would be an interesting direction for future work. It would also be interesting to explore methods to accelerate alternative methods for solving two-stage stochastic IPs, such as dual decomposition \cite{caroe1999dual} and methods that use different types of cuts \cite{boland2018combining,chen2022generating,gade2014decomposition,lulli2004branch,rahmaniani2017benders,van2020converging}.} 


\bibliographystyle{spmpsci}
\bibliography{saa-sequence}

\newpage
\begin{appendices}
\section{}
\label{sec:Appendix}
\begin{sloppypar}
We provide detailed descriptions of the problems considered 
in our  computational study.
We adopt the formulations
and problem descriptions from 
\cite{jia2021benders} for both test problems.
\end{sloppypar}
\subsection{Capacitated Facility Location Problem}
Consider a set $F$ of facilities,
where a facility $i \in F$ has a setup cost $f_i$ and a
production capacity limit $u_i$.
Additionally consider a set of customers $C$,
where each customer $j$ has an uncertain 
demand denoted by $\tilde{d}_j$. The vector of realizations of this uncertain demand in a scenario $k \in [K]$ is denoted by $d_k = [d_{kj}: j \in J]$.
This demand can be met by shipping from any open facility $i$
to customer $j$
at a unit transportation cost $c_{i j}$.
Any unmet demand for customer $j$ incurs a lost-sale penalty,
with a unit cost $\rho_j$.
The objective is to select a subset of facilities to open in 
order to minimize the total expected cost.

This problem is modeled as a two-stage stochastic programming problem.
The first-stage binary decision variables $x_i$
indicate whether the facility $i$ is opened or not.
In the second-stage, after revelation of the demands, 
we introduce continuous decision variables $y_{i j} \geq 0, \forall i \in F, j \in C$,
which represent goods transported from facility $i$ to customer $j$.
The model aims to find the best decisions to minimize the 
sum of facility setup cost,
expected transportation cost, and expected lost-sale cost.
The first-stage formulation is given by:
$$
\begin{aligned}
\min _x & \sum_{i \in F} f_i x_i+\sum_{\kink} p_k Q(x,d_k) \\
\text { s.t. } & x_i \in\{0,1\} \quad i \in F .
\end{aligned}
$$
The second-stage problem for each scenario $k$, $Q(x,d_k)$ is defined using 
transportation variables $y_{i j}$ from a facility $i$ to a 
customer $j$ and auxiliary variables $\alpha_j$
that denote the amount of unmet demand of customer $j$.
We have:
\begin{align*}
Q(x,d_k) = \min_{y, \alpha} \quad & \sum_{i \in F} \sum_{j \in C} c_{i j} y_{i j} + \sum_{j \in F} \rho_j \alpha_j \\
\text{subject to} \quad & \sum_{j \in C} y_{i j} \leq u_i x_i & \forall i \in F, \\
& d_{kj} - \sum_{i \in F} y_{i j} \leq \alpha_j & \forall j \in C, \\
& y_{i j} \geq 0 & \forall i \in F, j \in C, \\
& \alpha_j \geq 0 & \forall j \in C.
\end{align*}
By allowing unmet demand, the problem always has a feasible solution 
and Benders decomposition only requires optimality cuts.

\subsection{Multi Commodity Network Design Problem} 

Consider a directed network with node set $N$, arc set $A$, and commodity set $\commodities$.
Each commodity $\commodity$
has an uncertain demand $\tilde{v}_\commodity$ that 
must be routed from an origin node,
$o_\commodity \in N$, to its destination node, $d_\commodity \in N$.
The vector of demands in scenario $k \in [K]$ is denoted by
$v_{k} = [v_{k}^\commodity: \commodity \in \commodities]$.
For each arc
$(i, j) \in A$, there is an installation cost $f_{i j}$ 
and an arc capacity $u_{i j}$.
The cost for transporting one unit of commodity $\commodity$ on installed arc $(i, j)$ is $c_{i j}^\commodity$. 
Any demand that is not met is penalized at a rate of $B > 0$ per unit.

The objective in the first-stage is to decide
which subset of arcs to install to 
minimize the sum of arc installation cost and 
expected total transportation cost and penalty for unmet demand. 
In the second-stage, after demand is realized,
the goal is to determine the optimal flow of commodities 
through the installed arcs to minimize the sum of transportation and unmet demand costs.

In the first-stage, we define binary decisions $x_{i j}$ for all arcs
$(i, j) \in A$
such that $x_{i j}=1$ if we install arc $(i, j)$.
The first-stage formulation is given by:
$$
\begin{aligned}
 \min _x \sum_{(i, j) \in A} f_{i j} x_{i j}+\sum_{k \in [K]} p_k Q(x,v_k) \\
\text { s.t. } x_{i j} \in\{0,1\} \quad(i, j) \in A .
\end{aligned}
$$
In the second-stage,
we define non-negative continuous decisions 
$y_{i j}^\commodity$
to represent transportation units of commodity $\commodity$ 
on arc $(i, j)$.
Additionally, we introduce
auxiliary variables $\alpha^\commodity$ to denote the unmet demand 
for commodity $\commodity$. 
For scenario $k$, the formulation is given by:

\begin{align*}
Q(x,v_k) = \min_{y, \alpha} \quad & \sum_{\substack{(i, j) \in A}} \left[ \sum_{\substack{\commodity \in \commodities}} c_{i j}^\commodity y_{i j}^\commodity + B \alpha_i^\commodity \right] \\
\text{subject to} \quad & \sum_{\substack{j:(i, j) \in A}} y_{i j}^\commodity - \sum_{\substack{j:(j, i) \in A}} y_{j i}^\commodity  = g_{i}^{\commodity}(v_{k}^\commodity - \alpha^\commodity) \quad &\forall i \in N, \commodity \in \commodities, \\
                        & \sum_{\substack{\commodity \in \commodities}} y_{i j}^\commodity \leq u_{i j} x_{i j} \quad &\forall (i, j) \in A, \\
                        & y_{i j}^\commodity \geq 0 \quad &\forall (i, j) \in A, \commodity \in \commodities, \\
                        & \alpha_i^\commodity \geq 0 \quad &\forall i \in N, \commodity \in \commodities.
\end{align*}
The parameter $g_{i}^\commodity$ is set to $1$ if node $i$ is 
the origin of the commodity $\commodity,-1$ if node $i$ 
is the destination of the commodity $\commodity$, or 0 otherwise.
By allowing unmet demand, the problem always has a feasible solution and
Benders decomposition only requires optimality cuts.

\subsection{Stochastic Unit Commitment Problem}

Consider a set $\mathcal{G}$ of thermal generators operating over a discrete time horizon
$\mathcal{T} = \{1, \ldots, T\}$ with time periods indexed by $t$.
Each generator $g \in \mathcal{G}$ has power output bounds $\underline{P}_g, \overline{P}_g$ (minimum and maximum power output in MW when the generator is on),
temporal constraints $U_g$ (minimum up-time) and $D_g$ (minimum down-time) measured in number of periods,
and marginal production cost $C_g$ (\$/MWh).
The electricity demand at time $t$ in scenario $k \in [K]$ is denoted by $B_{tk}$ (MW).
To ensure feasibility of the subproblems,
we allow unmet demand
with penalty cost $M$ per unit.
The objective is to determine the commitment schedule (which generators are on or off at each time period) to minimize expected total cost.

The first-stage binary decision variables $x_{gt} \in \{0,1\}$
indicate the commitment status of generator $g$ at time $t$ (1 if on, 0 if off), for $g \in \mathcal{G}, t \in \{0, 1, \ldots, T\}$, where $x_{g0}$ represents the initial state at time 0.
Additionally, binary variables $w_{gt}$ and $v_{gt}$ for $g \in \mathcal{G}, t \in \mathcal{T}$
 track startup and shutdown events.
In the second-stage for scenario $k$, after revelation of demand,
continuous decision variables $p_{gtk} \geq 0, \forall g \in \mathcal{G}, t \in \mathcal{T}$,
represent power output from generator $g$ at time $t$,
and auxiliary variables $\sigma_{tk} \geq 0, \forall t \in \mathcal{T}$
to denote the unmet demand at time $t$.
The first-stage formulation is given by:
$$
\begin{aligned}
\min _x & \sum_{k \in [K]} p_k Q(x,B_k) \\
\text { s.t. } & x_{g0} = 0 \quad \forall g \in \mathcal{G}, \\
& x_{gt} = x_{g,t-1} + w_{gt} - v_{gt} \quad \forall g \in \mathcal{G}, t \in \mathcal{T}, \\
& \sum_{s=\max\{t-U_g+1, 1\}}^{t} w_{gs} \leq x_{gt} \quad \forall g \in \mathcal{G}, t \in \mathcal{T}, \\
& \sum_{s=\max\{t-D_g+1, 1\}}^{t} v_{gs} \leq 1 - x_{gt} \quad \forall g \in \mathcal{G}, t \in \mathcal{T}, \\
& x_{gt}, w_{gt}, v_{gt} \in \{0, 1\} \quad \forall g \in \mathcal{G}, t \in \mathcal{T}.
\end{aligned}
$$
The second-stage problem for each scenario $k$, $Q(x,B_k)$ is defined as:
\begin{align*}
Q(x,B_k) = \min_{p, \sigma} \quad & \sum_{g \in \mathcal{G}} \sum_{t \in \mathcal{T}} C_g p_{gtk} + \sum_{t \in \mathcal{T}} M \sigma_{tk} \\
\text{subject to} \quad & \underline{P}_g x_{gt} \leq p_{gtk} \leq \overline{P}_g x_{gt} & \forall g \in \mathcal{G}, t \in \mathcal{T}, \\
& \sum_{g \in \mathcal{G}} p_{gtk} + \sigma_{tk} \geq B_{tk} & \forall t \in \mathcal{T}, \\
& p_{gtk} \geq 0 & \forall g \in \mathcal{G}, t \in \mathcal{T}, \\
& \sigma_{tk} \geq 0 & \forall t \in \mathcal{T}.
\end{align*}
By allowing unmet demand, the problem has relatively complete recourse.

\end{appendices}

\exclude{
\section{}
\begin{align*}
\left[ 2 \exp\left( -\frac{\epsilon^2}{8\sigma^2D^2} \right) \right]^n \leq \rho \\
n\ln (2 \exp\left( -\frac{\epsilon^2}{8\sigma^2D^2} \right))  \geq \ln\rho \\
n\left(\ln 2  -\frac{\epsilon^2}{8\sigma^2D^2} \right)  \geq \ln\rho  \\
n \geq \frac{\ln \rho}{\left(\ln 2  -\frac{\epsilon^2}{8\sigma^2D^2} \right) } \text{if denominator is positive}
n \geq \frac{\ln \rho}{\left(\ln 2  -\frac{\epsilon^2}{8\sigma^2D^2} \right) } \text{if denominator is positive}
\end{align*}}

\end{document}